\documentclass[11pt]{article}

\usepackage[T1]{fontenc}
\usepackage[utf8]{inputenc}
\usepackage{lmodern}
\usepackage{dsfont}

\usepackage{geometry}
\usepackage{amsmath, amssymb, amsthm, mathtools, bm}
\usepackage{bbm}
\usepackage{mathrsfs}
\usepackage{stmaryrd}
\usepackage[normalem]{ulem}
\usepackage{yhmath}

\usepackage{algorithm}
\usepackage{algpseudocode}
\usepackage{algorithmicx}

\usepackage{graphicx}
\usepackage{xcolor}

\usepackage{hyperref}
\hypersetup{
    colorlinks=true,
    linkcolor=red,
    citecolor=green,
    urlcolor=teal
}

\usepackage{enumitem}
\usepackage{authblk}
\usepackage{multicol} 
\usepackage{todonotes}
\usepackage{comment}

\newtheorem{theorem}{Theorem}[section]
\newtheorem{lemma}{Lemma}[section]
\newtheorem{proposition}{Proposition}[section]

\theoremstyle{definition}

\newtheorem{assumption}{Assumption}[section]
\newtheorem{remark}{Remark}[section]

\newcommand{\R}{\mathbb{R}} 
\newcommand{\N}{\mathbb{N}}
\newcommand{\Z}{\mathbb{Z}}

\renewcommand{\P}{\mathbb{P}}  
\newcommand{\E}{\mathbb{E}} \newcommand{\Var}{\mathrm{Var}} \newcommand{\Cov}{\mathrm{Cov}}

\newcommand{\eqlaw}{\stackrel{\Lc}{=}}

\newcommand{\half}{\frac12}

\newcommand{\1}{\mathbbm{1}} 

\newcommand{\F}{\mathbb{F}} 
\newcommand{\Fc}{\mathcal{F}} 

\newcommand{\Lc}{\mathcal{L}} \newcommand{\Nc}{\mathcal{N}}

\newcommand{\Oc}{\mathcal{O}}
\newcommand{\Ac}{\mathcal{A}}
\newcommand{\Mc}{\mathcal{M}}

\newcommand{\Xc}{\mathcal{X}}
\newcommand{\Yc}{\mathcal{Y}}

\newcommand{\Mk}{\mathfrak{m}}

\numberwithin{equation}{section}
\numberwithin{figure}{section}
\numberwithin{table}{section}

\title
      {A Martingale Approach To Fluctuations of Rank Estimators in Sensitivity Analysis}

\author[1]{Reda Chhaibi}
\author[2,3]{Fabrice Gamboa}
\author[2]{Cl\'ement Pellegrini}

\affil[1]{Universit\'e C\^ote d'Azur, LJAD, Nice, France}
\affil[2]{Univ Toulouse, UT2J, INSA, CNRS, IMT, Toulouse, France.}
\affil[3]{Facultad de Ingeniería UdM. Medellín, Colombia.}

\begin{document}

\maketitle

\begin{abstract}
Given a bivariate random pair $(X,Y)$, a natural problem is to estimate, from a single sample $(X_i,Y_i)_{1\le i\le n}$, quantities such as $\E\!\left[\E[Y\mid X]^2\right]$. More broadly, sensitivity indices are designed to quantify the possibly nonlinear influence of an input variable $X$ on an output variable $Y$. A classical example is the Sobol' index
$$
\frac{\Var(\E[Y\mid X])}{\Var(Y)} \in [0,1] \ .
$$

Another important example is the Cram\'er--von Mises (CvM) index. Following the pioneering work of Chatterjee \cite{chatterjee2021new}, consistent rank-based estimators are now available for such quantities.

In this paper, we prove sharp fluctuation results using martingale methods. Our framework yields a unified treatment of the univariate Sobol' index, a multivariate extension involving several functions of the same scalar input, and the CvM index. As a consequence, we recover, unify, and simplify results from Gamboa et al. \cite{gamboa2022global, gamboa2023erratum}, Lin--Han \cite{lin2022limit}, and Kroll \cite{kroll2024asymptotic}. In particular, we work under minimal regularity assumptions. Furthermore, while the Gaussian fluctuation phenomenon itself was already known, the novelty lies in the structure of the asymptotic variance: for the CvM index, we obtain, to the best of our knowledge, the first explicit formula, while for the Sobol' index, we derive a new expression with a more structured form.
\end{abstract}

\medskip

\hrule
\tableofcontents
\bigskip
\hrule


\section{Introduction}

{\bf Historical context.}
The quantification of stochastic dependence has deep roots in both theoretical and applied probability and statistics. In classical statistics, Pearson \cite{Pearson1901} introduced the moment correlation coefficient to measure linear association between variables. Shortly thereafter, Spearman \cite{Spearman1904} proposed a rank-based correlation measure, offering robustness to non-normality and monotonic transformations. More formally, a dependence measure between two random variables $X$ and $Y$ with joint distribution $\P_{X,Y}$ is a function $\rho$ mapping $\P_{X,Y}$ to the interval $[0,1]$. Intuitively, $\rho(\P_{X,Y}) = 0$ when $X$ and $Y$ are independent in some well-defined sense, while $\rho(\P_{X,Y}) = 1$ when $Y$ is a deterministic function of $X$. For instance, the Pearson correlation (defined for square-integrable random variables) is the ratio of their covariance to the product of their standard deviations. It equals zero for uncorrelated variables and reaches one if and only if $Y$ is an affine function of $X$. In general, one may require additional properties of a dependence measure, such as monotonicity with respect to a suitable ordering on the associated copula \cite{Sklar1959}, or invariance under certain classes of transformations. For a detailed discussion of classical dependence measures and their desirable properties, we refer the reader to the comprehensive review \cite{wu2010new}. For their key role in an industrial context, including model simplification, risk quantification, and decision-making, we refer to \cite{Rocquigny2008}.

There are two main aspects concerning dependence measures. The first aspect is the definition of a good dependence measure $\rho(\P_{X,Y})$ (hereafter simply denoted by $\rho$), that is, a measure satisfying desirable properties such as vanishing if and only if $X$ and $Y$ are independent, monotonicity, and others. Once such a measure is defined, the second aspect concerns the statistical estimation of $\rho$. More precisely, given an i.i.d. sample
$$
(X_1,Y_1), \ldots, (X_n,Y_n)
$$
drawn from the distribution of $(X,Y)$
with $n>1$, how can one estimate $\rho$? Furthermore, once such a statistical estimator $\widehat{\rho}_n$ is constructed, one seeks to establish its convergence toward the target parameter $\rho$ at the appropriate rate. Note that this setting corresponds to a semiparametric problem (see \cite{Vaart_1998}), for which the expected rate of convergence is $\sqrt{n}$.
Our paper fits within this statistical framework. Using a martingale-based approach, we provide a complete treatment of the estimation problem for two widely used dependence measures, namely the Sobol' index \cite{sobol1990sensitivity} and the Cramér-von Mises dependence measure \cite{chatterjee2021new}, when employing the rank-based estimator introduced by Chatterjee \cite{chatterjee2021new}. Before presenting further details on these two dependence measures and their statistical estimation, we emphasize that the study of dependence measures has experienced a remarkable resurgence over the past decade, driven by practical needs \cite{chatterjee2024survey}. First, within the paradigm of so-called statistical computer experiments \cite{santner2003design}, one seeks to quantify the contribution of randomness induced by an uncertain physical parameter $X$ on a physical quantity of interest $Y$, which are related through a deterministic mapping (typically a scientific or empirical law),
\begin{equation}
Y = f(X,\varepsilon) \ .
\label{lolo}
\end{equation}
Here, $\varepsilon$ represents the randomness arising from other uncertain physical quantities that are stochastically independent of $X$.
Second, in the context of machine learning, the function $f$ in~\eqref{lolo} may correspond to a learned regression model, while $X$ represents an input feature. In this setting, practitioners are often interested in providing sensitivity or interpretability measures that quantify the effect of the feature $X$ on the output $Y$.
In both settings, a common approach consists in considering dependence measures based on a mean discrepancy either between the distribution of $Y$ and its conditional distribution given $X$, or between the joint distribution of $(X,Y)$ and the product of their marginal distributions. In the machine learning literature, we refer to the pioneering work of Gretton et al., who introduced methods based on reproducing kernel Hilbert space (RKHS) embeddings (see \cite{gretton2005measuring} and the references therein). In the context of computer experiments, we refer to the comprehensive exposition by Da Veiga \cite{da2015global}, further developed in \cite{da2021basics}.
In the setting of square-integrable random variables, the most widely used dependence measure is based on the quadratic deviation between the unconditional mean of $Y$ and its conditional mean given $X$. The so-called Sobol' index \cite{sobol1990sensitivity} is defined through the normalized discrepancy
\begin{align}
\label{def:index_sobol}
\rho^{\mathrm{Sobol'}} := \frac{\Var \ \E(Y \mid X)}{\Var \ Y} \ .
\end{align}
Although very popular, this dependence measure may vanish even when $X$ and $Y$ are stochastically dependent. To overcome this limitation while preserving the appealing quadratic structure, Dette et al.\ \cite{dette2013copula} proposed using a normalized Cramér-von Mises discrepancy between the distribution function of $Y$ and its conditional distribution given $X$. This leads to our second dependence measure of interest,
\begin{equation}
\label{def:index_cramer}
\rho^{\mathrm{CvM}} := \frac{\E\!\left( F_Y(Y) - F_{Y \mid X}(Y) \right)^2}{\E\!\left[ F_Y(Y)\big(1 - F_Y(Y)\big) \right]} \ .
\end{equation}
Here, $F_Y$ (respectively $F_{Y \mid X}$) denotes the distribution function (respectively the conditional distribution function) of $Y$.
We note that the dependence measure $\rho^{\mathrm{CvM}}$ was later introduced independently in the context of computer code experiments in \cite{gamboa2018sensitivity}.
The main difficulty in estimating either $\rho^{\mathrm{Sobol'}}$ or $\rho^{\mathrm{CvM}}$ lies in the estimation of expectations involving the square of a conditional expectation. The well-known Pick-Freeze method, originally proposed by Sobol' \cite{sobol1990sensitivity} and later studied from a mathematical statistics perspective in \cite{janon2014asymptotic}, achieves the optimal $\sqrt{n}$ rate of convergence, but requires the use of specific supplementary samples.
Using order statistics, Chatterjee \cite{chatterjee2021new} elegantly proposed estimators that avoid the need for supplementary samples while retaining the optimal rate of convergence. In that work, the author establishes a non-asymptotic concentration inequality and, in the case of independence, proves a central limit theorem with the correct convergence rate. The Chatterjee estimators $\widehat{\rho}^{\mathrm{Sobol'}}_n$ and $\widehat{\rho}^{\mathrm{CvM}}_n$ are defined and discussed in Subsection~\ref{subsection:prerequisites}.

\medskip

{\bf Literature review of state of the art.}
The estimation of the Sobol' index using the Pick-Freeze method, orthogonal basis expansions, or quasi-Monte Carlo techniques has been extensively studied in recent years. We refer to \cite{da2021basics} for a comprehensive review of this literature. A central limit theorem for the estimator $\widehat{\rho}^{\mathrm{Sobol'}}_n$ is established in \cite{gamboa2022global}, where the proof relies on detailed asymptotic expansions and on a representation of order statistics via exponential distributions. The elegant approach introduced by Chatterjee \cite{chatterjee2021new} has subsequently attracted considerable attention, and we refer to \cite{chatterjee2024survey} for a recent overview. Regarding asymptotic normality, two submitted works are currently available: in \cite{lin2022limit}, the authors establish asymptotic normality using H\'ajek representations, while in \cite{kroll2024asymptotic}, mixing techniques are employed to obtain the result. We view our approach as a salient and complementary contribution, as it provides a unified framework for rank-based estimators and paves the way for numerous extensions.

\medskip

{\bf Our contribution.}
Our contribution consists of a complete analysis of the asymptotic properties of $\widehat{\rho}^{\mathrm{Sobol'}}_n$ and $\widehat{\rho}^{\mathrm{CvM}}_n$, including consistency and central limit theorems, based on martingale techniques. We further exploit the functional relationship between the Sobol' and Cramér-von Mises indices established in \cite{gamboa2018sensitivity}.
Indeed, our approach is entirely novel and proves to be robust in more general settings, including multidimensional and functional-valued regression models. This will be treated in a forthcoming paper.
Moreover, it paves the way for a general central limit theorem for rank-based estimators of general Sobol' indices (see, for instance, \cite{da2021basics}), as well as for multidimensional generalizations of $\rho^{\mathrm{CvM}}$ proposed and studied in \cite{azadkia2021simple, azadkia2021foci, azadkia2025bias}. In these latter settings, the main remaining challenge concerns the treatment of bias (see \cite{azadkia2025bias}).

\medskip

{\bf Organization of the paper.} Section \ref{section:main_results} states the three main results of the paper after the necessary prerequisites: Theorem \ref{thm:limit_sobol} deals with the fluctuations of the Sobol' rank estimator $\widehat{\rho}^{\mathrm{Sobol'}}_n$. Theorem \ref{thm:limit_sobol_multi} deals with a multivariate extension. It paves the way towards analyzing fluctuations of  $\widehat{\rho}^{\mathrm{CvM}}_n$ which is stated as Theorem \ref{thm:limit_cvm}. In particular, the first statement is followed by a sketch of proof that presents the core ideas and techniques used throughout the paper. 

Section \ref{section:process_decomposition} contains key decompositions of processes, which implement the main trick of the paper. The decompositions are with respect to a natural filtration $\F = \left( \Fc_i \ ; \ i \in \N \right)$. We state an exact decomposition built from a double Doob decomposition, and an approximate decomposition. Their relevance is to force the appearance of an $\Fc_0$-measurable contribution, an $\F$-martingale contribution and a remainder.

Section \ref{section:consistency} shows consistency of the Sobol' estimator in a very simple fashion. It can be skipped on a first read or serve as a warm-up which illustrates the strength of the decompositions from Section \ref{section:process_decomposition}.

The remaining Sections \ref{section:proof_univariate}, \ref{section:proof_multivariate} and \ref{section:proof_cvm} detail the proofs of the main theorems.



\section{Main results}
\label{section:main_results}

\subsection{Definitions and prerequisites}
\label{subsection:prerequisites}

We consider the framework of input--output models. The inputs are modeled by 
a couple $(X,\varepsilon)$, where $X$ and $\varepsilon$ are assumed to be independent. 
The output is described by a random variable $Y$. In the sequel $X$ is a real-valued random variable.

\emph{Global Sensitivity Analysis} (GSA) aims to identify which inputs have the 
greatest influence on the output. In particular, the identification of relevant 
parameters is achieved by constructing indices that quantify the dependence 
between random variables. Central indices in Global Sensitivity Analysis (GSA) include the so-called \emph{Sobol' index} and the \emph{Cramér--von Mises index}.

\medskip

\paragraph{Sensitivity indices.} Let us recall the definitions of Eqs. \eqref{def:index_sobol} and \eqref{def:index_cramer}, and give alternate expressions. The \textit{Sobol' index} \cite{Sobol2001} quantifies the contribution of an input variable \( X \) to the variance of an output \( Y \). It is defined as
\begin{align}
\label{def:index_sobol2}
\rho^{\mathrm{Sobol'}} 
      := \frac{\Var \ \E(Y \mid X)}{\Var \ Y}
       = \frac{ \E\left[ \E[ Y | X ]^2 \right] - \E[Y]^2 }
              { \E[Y^2] - \E[Y]^2 } \ .
\end{align}
Following \cite{gamboa2018sensitivity}, we define the Cramér–von Mises sensitivity index of \(X\) as
\begin{align}
\label{def:index_cramer2}
\rho^{\mathrm{CvM}} 
:= 
\frac{\E\!\left( F_Y(Y) - F_{Y \mid X}(Y) \right)^2}{\E\!\left[ F_Y(Y)\big(1 - F_Y(Y)\big) \right]}
=
\frac{ \int_{\mathbb{R}} \E \left[ \left( F_{Y|X}(t) - F_Y(t) \right)^2 \right] \, dF_Y(t) }{ \int_{\mathbb{R}} F_Y(t)\left( 1 - F_Y(t) \right) \, dF_Y(t) } \ ,
\end{align}
where $$F_Y(t) := \P(Y \leq t) $$ is the cumulative distribution function (CDF) of \(Y\), and $$ F_{Y|X=x}(t) := \P(Y \leq t \mid X=x) $$ is the conditional CDF given $X=x$. Also, $F_{Y|X} = \P(Y \leq \cdot \mid X)$ is the random conditional CDF when $x$ is sampled according to the distribution of $X$.

\medskip

\paragraph{Rank statistic.} Classical estimators, such as those based on the Pick-Freeze method, can be computationally expensive and typically require specific experimental designs.
Following the ideas of \cite{chatterjee2021new}, a highly efficient estimation from a single sample can be achieved thanks to rank statistics. We now introduce the notation used throughout the paper. Let
\[
(X_1,Y_1),\ldots,(X_n,Y_n)
\]
be an i.i.d. sample, where the distribution of \(X\) is assumed to be diffuse (so that ties occur with probability zero). Let \(\sigma_n\) denote the permutation of \(\{1,\ldots,n\}\) such that
\[
X_{\sigma_n(1)} < \cdots < X_{\sigma_n(n)} .
\]

For \(j \in \{1,\ldots,n\}\), define
\[
N_n(j) =
\begin{cases}
\sigma_n\bigl(\sigma_n^{-1}(j)+1\bigr), & \text{if } \sigma_n^{-1}(j) < n \ ,\\
\sigma_n(1), & \text{if } \sigma_n^{-1}(j) = n \ .
\end{cases}
\]

Note that \(N_n(j)\) is the index of the immediate right neighbor of \(X_j\) in the ordered sample. By convention, the successor of the largest observation is the smallest one.

\subsection{Univariate Sobol' Estimator}
\label{subsection:univariate_sobol}

This section is devoted to the statement of limit theorems for the rank-based estimator of Sobol' indices. This estimator appeared in Gamboa et al.~\cite{gamboa2022global,gamboa2023erratum}, following the rank-based ideas of \cite{chatterjee2021new}.
Given an i.i.d. sample $ (X_1, Y_1), \dots, (X_n, Y_n) $, the estimator is defined by
\begin{align}
\widehat{\rho}^{\mathrm{Sobol'}}_n 
&:= 
\frac{
\displaystyle \frac{1}{n} \sum_{i=1}^n Y_i Y_{N_n(i)} 
- \left( \frac{1}{n} \sum_{i=1}^n Y_i \right)^2
}{
\displaystyle \frac{1}{n} \sum_{i=1}^n Y_i^2 
- \left( \frac{1}{n} \sum_{i=1}^n Y_i \right)^2
} \ ,
\label{eq:Sobol_estimator_rank}
\end{align}
where $ N_n(i) $ denotes the index of the observation whose rank with respect to the input variable corresponds to that of $ X_i $. This estimator is straightforward to compute and requires only a single i.i.d. sample, in contrast with classical Pick-Freeze approaches.

We consider throughout the input--output model
$$
Y = f(X,\varepsilon) \ ,
$$
whose relevance is discussed in Appendix~\ref{appendix:representation_Y}. As discussed in Theorem \ref{thm:transfer}, such an expression is generic for pairs $(X,Y)$, leading to general measurable $f$.  
In many practical applications, the function \( f \) is regarded as a black box and cannot be accessed explicitly. This can be the case if $f$ is a complex simulator, or a legacy computer code.
Using the dependence structure induced by \( f \), the output variables \( Y_i \) can be written as
\begin{align}
Y_i = f(X_i,\varepsilon_i) \ ,
\label{eq:Y_i_def}
\end{align}
where \( (\varepsilon_i)_{i \in \mathbb{N}^*} \) is an i.i.d. sequence of random variables, independent of the input sequence \( (X_i)_{i \in \mathbb{N}^*} \).
We will also frequently make use of the conditional expectation
$$
\varphi(x) := \E_\varepsilon[f(x,\varepsilon)] = \E[Y \mid X = x] \ ,
$$
as well as the associated conditional second moment
$$
\E_\varepsilon[f(x,\varepsilon)^2] = \E[Y^2 \mid X = x] \ .
$$

In order to define martingales, we introduce the filtration $\mathbb{F} = \left( \mathcal{F}_i \ ; \ i \in \N \right)$ as
\begin{align}
    \mathcal{F}_i := 
    \sigma \left(X_k \ ; \ k \in \mathbb{N}^*\right)
    \vee \sigma\left( \varepsilon_1, \dots, \varepsilon_i \right), \quad \text{for } i \geq 0 \ .
    \label{eq:Filtration}
\end{align}
This filtration gradually reveals the noise variables \( \varepsilon_i \), while all the $X_n$ variables are assumed to be known from the beginning. In particular, the initial filtration $\Fc_0$ will play an important role, which we highlight by repeating the definition valid for $i=0$
\begin{align}
    \mathcal{F}_0 = 
    \sigma \left(X_k \ ; \ k \in \mathbb{N}^*\right) \ .
    \label{eq:Filtration0}
\end{align}

We now introduce several matrices that will play a central role in the expression of the asymptotic covariance structures. These matrices arise naturally in the proofs of the limit theorems.  
In particular, we express the covariance matrices in terms of two generic functions \( f \) and \( g \), a formulation that will prove convenient when extending the results to the multidimensional setting. Let $f$ and $g$ be two functions. We set
\begin{align}
\label{eq:cov_sigma0_fg}
   \Sigma_{0}( f, g)
:= & \ \Cov\left( 
\begin{pmatrix}
\E_\varepsilon[f(X,\varepsilon)]^2\\
\E_\varepsilon[f(X,\varepsilon)]\\
\E_\varepsilon[f^2(X,\varepsilon)]
\end{pmatrix} ,
\begin{pmatrix}
\E_\varepsilon[g(X,\varepsilon)]^2\\
\E_\varepsilon[g(X,\varepsilon)]\\
\E_\varepsilon[g^2(X,\varepsilon)]
\end{pmatrix}
\right) \ .
\end{align}
Now let us introduce the two random matrices 
\begin{align}
\label{eq:sigma_a}
\Sigma_a(X,f,g) := \begin{pmatrix}
      2 \E_\varepsilon[ f(X, \varepsilon) ] \\
      1 \\
      1
      \end{pmatrix}
      \begin{pmatrix}
      2 \E_\varepsilon[ g(X, \varepsilon) ] \\
      1 \\
      1
      \end{pmatrix}^T
    + \Cov_\varepsilon[ f(X, \varepsilon), g(X, \varepsilon) ]
     \begin{pmatrix}
      1  \\
      0 \\
      0
      \end{pmatrix}
      \begin{pmatrix}
      1  \\
      0\\
      0
      \end{pmatrix}^T \ ,
\end{align}
and
\begin{align}
\label{eq:sigma_b}
\Sigma_b(X, f, g)=\Cov_\varepsilon\left( 
\begin{pmatrix}
f(X, \varepsilon)\\
f(X, \varepsilon)\\
f^2(X, \varepsilon)
\end{pmatrix} ,
\begin{pmatrix}
g(X, \varepsilon)\\
g(X, \varepsilon)\\
g^2(X, \varepsilon)
\end{pmatrix}
\right) \ . 
\end{align}
Here $\Cov_\varepsilon$ means that the covariance is computed by averaging over $\varepsilon$. For example 
$$ \Cov_\varepsilon(f(X, \varepsilon),g(X, \varepsilon))
  =\E_{\varepsilon}(f(X, \varepsilon)g(X, \varepsilon))-\E_{\varepsilon}(f(X, \varepsilon))\E_{\varepsilon}(g(X, \varepsilon))
  =\Cov(f(X, \varepsilon),g(X, \varepsilon)\vert X) \ .
$$  
Finally we define
\begin{align}
\label{eq:cov_sigma1_fg}
\Sigma_1(f,g):=\E[\Sigma_a(X, f, g)\odot\Sigma_b(X, f, g)] \ ,
\end{align}
where the symbol $\odot$ stands for the Hadamard (component-wise) product.
We are now able to express the main theorem concerning univariate Sobol' indices.  

\begin{theorem}[Main Theorem -- Limit Theorem for \texorpdfstring{$\widehat{\rho}^{\mathrm{Sobol'}}_n$}{Sobol' index estimator}]
\label{thm:limit_sobol}
Assume that $f$ is bounded and $X$ has a continuous distribution (no atoms). Then the following holds.

\begin{itemize}
\item (Consistency) The rank-based estimator $\widehat{\rho}^{\mathrm{Sobol'}}_n$ is consistent in the almost-sure sense
\begin{align}
\lim_{n \to \infty} \widehat{\rho}^{\mathrm{Sobol'}}_n
=
\rho^{\mathrm{Sobol'}}
=
\frac{ \E\left[ \E[ Y | X ]^2 \right] - \E[Y]^2 }
     { \E[Y^2] - \E[Y]^2 } \ \quad \text{almost surely} \ .
    \label{eq:main_thm_LLN}
\end{align}

\item (Fluctuations)
Define
\begin{align}
\label{eq:def_Delta_n}
\Delta_n := & \ \E \left[\left( \varphi(X_{1}) - \varphi(X_{N_n(1)}) \right)^2\right]
= \frac{1}{n} \E\left[  \sum_{i=1}^n \left( \varphi(X_{\sigma_n(i-1)}) - \varphi(X_{\sigma_n(i)}) \right)^2 \right] 
\stackrel{n \rightarrow \infty}{\longrightarrow} 0
\ .
\end{align}
We have the following Gaussian fluctuations
\begin{align}
    \sqrt n \left[ \widehat{\rho}_n^{\mathrm{Sobol'}}
        - \left( \rho^{\mathrm{Sobol'}} - \frac{\Delta_n}{2\Var(Y)} \right)
        \right] &\xrightarrow{\Lc} \mathcal{N}(0, \sigma^2_{\mathrm{Sobol'}}) \ ,
     \label{eq:main_thm_TCL}
\end{align}
where the explicit asymptotic variance $\sigma^2_{\mathrm{Sobol'}}$ is given by
\begin{align}
    \label{eq:def_sigmaXY}
    \sigma^2_{\mathrm{Sobol'}} & := v(f)^\top \left( \Sigma_0(f,f) + \Sigma_1(f,f) \right) v(f) \ ,
\end{align}
with
\begin{align}
\label{eq:def_g}
v(f) & := \frac{1}{\operatorname{Var}(Y)} 
       \begin{pmatrix} 
        1 \\ 2 \E[Y]( \rho^{\mathrm{Sobol'}} - 1) \\ - \rho^{\mathrm{Sobol'}}
       \end{pmatrix} \ .
\end{align}

In particular, if  $\lim_{n \rightarrow \infty} \sqrt{n} \Delta_n = 0$, then $\sqrt n \left[ \widehat{\rho}_n^{\mathrm{Sobol'}} - \rho^{\mathrm{Sobol'}} \right]$
has the same fluctuations, i.e.
\begin{align}
    \sqrt n \left[ \widehat{\rho}_n^{\mathrm{Sobol'}}
        -  \rho^{\mathrm{Sobol'}}
        \right] &\xrightarrow{\Lc} \mathcal{N}(0, \sigma^2_{\mathrm{Sobol'}}) \ .
\end{align}
This is for example implied by $\varphi = \E[ Y | X=\cdot ]$ having a bounded quadratic variation (also known as the $2$-variation).
\end{itemize}
\end{theorem}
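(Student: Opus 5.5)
The plan is to reduce everything to the joint behaviour of the three empirical quantities
\[
A_n := \frac1n\sum_{i=1}^n Y_iY_{N_n(i)},\qquad B_n:=\frac1n\sum_{i=1}^n Y_i,\qquad C_n:=\frac1n\sum_{i=1}^n Y_i^2 .
\]
The estimator is $\widehat{\rho}^{\mathrm{Sobol'}}_n=\Psi(A_n,B_n,C_n)$ with $\Psi(a,b,c)=(a-b^2)/(c-b^2)$. At the limit point $(\E[\varphi(X)^2],\E Y,\E Y^2)$ the gradient of $\Psi$ is exactly $v(f)$ from \eqref{eq:def_g}: indeed $\partial_a\Psi=1/\Var Y$, $\partial_b\Psi=2\E Y(\rho-1)/\Var Y$, and $\partial_c\Psi=-\rho/\Var Y$. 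Consistency and the CLT therefore follow from a joint law of large numbers and a joint CLT for $\sqrt n\,(A_n-\E[\varphi^2]+\Delta_n/2,\ B_n-\E Y,\ C_n-\E Y^2)$, combined with the delta method. The $-\Delta_n/2$ centring comes from the identity
\[
\frac1n\sum_i\varphi(X_i)\varphi(X_{N_n(i)})=\frac1n\sum_i\varphi(X_i)^2-\frac1{2n}\sum_i\bigl(\varphi(X_i)-\varphi(X_{N_n(i)})\bigr)^2 ,
\]
which uses the fact that $N_n$ is a cyclic permutation.

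The core step is the decomposition relative to the filtration $\F$ of \eqref{eq:Filtration}. Each of the three statistics is written as an $\Fc_0$-measurable part plus an $\F$-martingale plus a remainder. For $B_n$ and $C_n$ this is immediate, since $Y_i-\varphi(X_i)$ and $Y_i^2-\E_\varepsilon[f^2(X_i,\varepsilon)]$ are martingale increments. For $A_n$, write $Y_i=\varphi(X_i)+\eta_i$. Then
\[
Y_iY_{N(i)}=\varphi(X_i)\varphi(X_{N(i)})+\eta_i\varphi(X_{N(i)})+\varphi(X_i)\eta_{N(i)}+\eta_i\eta_{N(i)} .
\]
Summing over $i$, the linear terms collect into $\sum_j\eta_j\bigl(\varphi(X_{N(j)})+\varphi(X_{N^{-1}(j)})\bigr)$. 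Up to an $L^2$ error controlled by $\Delta_n$, this is $\sum_j 2\varphi(X_j)\eta_j$, which is where the entry $2\E_\varepsilon f$ in $\Sigma_a$ comes from. The quadratic term $\sum_i\eta_i\eta_{N(i)}$ is a degenerate, off-diagonal martingale: order the indices by revealing $\varepsilon$'s, and its increment at step $j$ is $\eta_j(\eta_{N(j)}\1_{N(j)<j}+\eta_{N^{-1}(j)}\1_{N^{-1}(j)<j})$. Its conditional variance gives the $\Cov_\varepsilon(f,f)$ term in $\Sigma_a$, once one checks that on average exactly one of the two neighbours precedes $j$. The $\Fc_0$-measurable parts, namely $\frac1n\sum\varphi(X_i)^2$, $\frac1n\sum\varphi(X_i)$ and $\frac1n\sum\E_\varepsilon f^2(X_i,\cdot)$, are i.i.d. averages. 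Their CLT yields the covariance $\Sigma_0(f,f)$, and the $-\frac1{2n}\sum(\varphi(X_i)-\varphi(X_{N(i)}))^2$ piece, centred at its mean $\Delta_n/2$, has to be shown to have fluctuations $o(n^{-1/2})$.

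For the CLT I would condition on $\Fc_0$ and apply the martingale CLT (Hall–Heyde) to the triangular array of increments $\xi_{n,j}\in\R^3$. Boundedness of $f$ gives a Lindeberg condition for free. The bracket $\sum_j\E[\xi_{n,j}\xi_{n,j}^\top\mid\Fc_{j-1}]$ must converge in probability to $\E[\Sigma_a\odot\Sigma_b]=\Sigma_1(f,f)$. Here one uses that $\varphi(X_{N(j)})\approx\varphi(X_j)$ in $L^2$ (this is $\Delta_n\to0$, proved by approximating $\varphi$ in $L^2(P_X)$ by continuous functions), and that the $X$-marginals of the neighbours are asymptotically those of $X_j$. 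Because the martingale CLT is conditional on $\Fc_0$ with a deterministic limit bracket, the martingale part is asymptotically independent of the $\Fc_0$ part, and the two covariances add to $\Sigma_0+\Sigma_1$. Consistency follows from the same decomposition: fourth-moment bounds on the bounded martingales plus Borel–Cantelli give almost-sure convergence.

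The last claim follows if $\varphi$ has bounded 2-variation $V$. Then $\sum_i(\varphi(X_{\sigma(i-1)})-\varphi(X_{\sigma(i)}))^2\le V+O(1)$ (the $O(1)$ accounts for the wrap-around term), so $\Delta_n=O(1/n)$. I expect the main obstacle to be controlling the $\Fc_0$-measurable remainder $\frac1n\sum(\varphi(X_i)-\varphi(X_{N(i)}))^2-\Delta_n$ at scale $o(n^{-1/2})$ for merely bounded measurable $\varphi$. My first attempt would be an Efron–Stein variance bound in the $X$'s: moving one $X_k$ changes only $O(1)$ neighbour pairs, each bounded by $4\|f\|_\infty^2/n$, which gives variance $O(1/n)\cdot n\cdot n^{-2}=O(n^{-2})$. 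The cross-term approximation $\varphi(X_{N(j)})\approx\varphi(X_j)$ inside the bracket is handled similarly.
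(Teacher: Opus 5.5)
Your architecture ($\Fc_0$-measurable i.i.d.\ part, a martingale handled by a CLT conditional on $\Fc_0$, an $\Fc_0$-measurable remainder, then the delta method with the exact centring $\Psi(\E\widehat\theta_n)=\rho^{\mathrm{Sobol'}}-\Delta_n/(2\Var Y)$) is the paper's. Your increments are also algebraically the paper's. Its double Doob decomposition produces $\bigl(\varphi(X_{\sigma_n(k-1)})+\varphi(X_{\sigma_n(k+1)})+\eta_{k-1}\bigr)\eta_k$, which is your linear plus quadratic piece, so you land on the same $\Sigma_1(f,f)$.

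The genuine difference is the order in which noise is revealed. The paper uses exchangeability of the $\varepsilon_i$ to pass to an equal-in-law copy in which noise is revealed along the sorted $X$'s. This forces it to cut the cycle ($\varepsilon_0=0$), to carry $\Oc(\|f\|_\infty^2)$ boundary terms, and to reason about a copy of the estimator; that is why its consistency proof only yields convergence in probability. You reveal noise in index order, charge each edge $\{i,N_n(i)\}$ to its later endpoint, and keep the exact cyclic identity. This works on the actual estimator, has no boundary terms, and shows that after an $L^2$ replacement costing $\Oc(n\Delta_n)$, everything except $\sum_i\eta_i\eta_{N_n(i)}$ is an i.i.d.\ sum. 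The price is some extra bracket bookkeeping: you must replace $\eta_{N_n(j)}^2$ by $\sigma^2(X_{N_n(j)})$ and discard the products $\eta_{N_n(j)}\eta_{N_n^{-1}(j)}$. Both errors are conditionally centred with bounded multiplicity, so they are $O_P(\sqrt n)$.

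The step you flagged as the main obstacle does not go through as written. Put $h(x,y)=(\varphi(x)-\varphi(y))^2$ and $W_n=\frac1n\sum_i h(X_i,X_{N_n(i)})$; you need $\Var W_n=o(1/n)$. Your stated input ($O(1)$ altered summands, each at most $4\|f\|_\infty^2/n$) gives only $\E(W_n-W_n^{(k)})^2=O(n^{-2})$. Efron--Stein then yields $\Var W_n=O(1/n)$, so $\sqrt n(W_n-\E W_n)$ is merely bounded, not vanishing. The extra factor $O(1/n)$ in your arithmetic is unjustified, and $O(n^{-2})$ should not be expected for merely bounded $\varphi$, where $\Delta_n\to0$ at no rate.

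The missing idea is that every altered summand is a nearest-neighbour increment. Suppose $X_k$ sits between $p$ and $s$ and the fresh copy $X_k'$ lands between $a$ and $b$. Then
\[
n\bigl(W_n-W_n^{(k)}\bigr)=h(p,X_k)+h(X_k,s)-h(p,s)+h(a,b)-h(a,X_k')-h(X_k',b).
\]
Absorb the bridging terms via $h(p,s)\le 2h(p,X_k)+2h(X_k,s)$, and likewise for $h(a,b)$. This gives $n|W_n-W_n^{(k)}|\le 3U_k$, where $U_k$ is the sum of $h$ over the two gaps adjacent to $X_k$ (old sample) and to $X_k'$ (new sample). Moreover $U_k\le 4\|h\|_\infty$ and, by exchangeability, $\E U_k=4\Delta_n$. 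Efron--Stein then gives $\Var(\sqrt n\,W_n)\le 72\|h\|_\infty\Delta_n\to0$. This proves exactly what is needed, with a rate, and is much shorter than the paper's explicit computation of the law of $(X_1,X_{N_n(1)},X_2,X_{N_n(2)})$.

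Relatedly, your almost-sure consistency sketch covers the martingales but not $W_n$; with the corrected variance, Chebyshev bounds are not summable. Feed the bounded differences $|W_n-W_n^{(k)}|\le 12\|h\|_\infty/n$ into McDiarmid's inequality instead. That gives $\P(|W_n-\E W_n|>\eta)\le 2e^{-c\eta^2 n}$, hence $W_n\to0$ almost surely by Borel--Cantelli. With these repairs your route is sound.
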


A comparison to the result of~\cite{gamboa2022global} is given in Subsection \ref{subsection:further_remarks}.
Let us now present the key elements of our approach, in a two-layered fashion. First, we shall present our main trick based on exchangeability. Then we give a more complete sketch of proof.

\paragraph{Key idea for handling the main term.} To alleviate notations of the proof, it is useful to extend \( \sigma_n \) from $\{1, \dots, n\}$ to all of \( \mathbb{N} \) by periodicity. We have
\begin{align}
    T_n(f) &= \sum_{i=1}^{n} Y_{i} Y_{N_n(i)}\nonumber\\ 
    &= \sum_{i=1}^{n} Y_{\sigma_n(i)} Y_{\sigma_n(i+1)}\nonumber\\ 
    &= \sum_{i=1}^{n} f( X_{\sigma_n(i)}, \varepsilon_{\sigma_n(i)} ) f(X_{\sigma_n(i+1)}, \varepsilon_{\sigma_n(i+1)} ) \ ,
\end{align}
where in the previous sum $\sigma_n(n+1) = \sigma_n(1)$ because of the definition of $N_n(n)$. 

By shifting indices by $1$, and invoking the periodicity once again, we obtain
\begin{align}
    T_n(f) &= \sum_{i=1}^{n} f( X_{\sigma_n(i-1)}, \varepsilon_{\sigma_n(i-1)} ) f(X_{\sigma_n(i)}, \varepsilon_{\sigma_n(i)} ) \ .
    \label{eq:Tn_def}
\end{align}

Now, fix \( n \), and since the variables $\left( \varepsilon_i \ ; \ i \in \{1, \dots, n\} \right)$ are exchangeable, we obtain the following equality in law
\begin{align}
  T_n(f) \stackrel{\Lc}{=} \sum_{i=1}^n f( X_{\sigma_n(i-1)}, \varepsilon_{i-1} ) f(X_{\sigma_n(i)}, \varepsilon_{i} ) \ .
  \label{eq:Tn_def_law}
\end{align}

In order to reflect the periodicity, here $\varepsilon_{0}$ needs to be understood as $\varepsilon_n$. This is a problem if one needs to use martingale arguments using the filtration \( \sigma(\varepsilon_1, \dots, \varepsilon_n) \) (or a finer filtration as we shall see). We need to discard the last term, thereby breaking the periodicity.

To that end, we {\it take as a definition} $\varepsilon_0=0$ and set a different quantity
\begin{align}
    Z_n(f) &:= \sum_{i=1}^n f(X_{\sigma_n(i-1)}, \varepsilon_{i-1}) f(X_{\sigma_n(i)} \ , \varepsilon_i) \ ,
    \label{eq:Zn_def}
\end{align}
so that the previous equality in law Eq.~\eqref{eq:Tn_def_law} does not hold anymore between \( T_n(f) \) and \( Z_n(f) \). It holds only up to an error term
\begin{align}
    T_n(f)
    &\stackrel{\Lc}{=} f\left( X_{\sigma_n(n)}, \varepsilon_n \right) f\left( X_{\sigma_n(1)}, \varepsilon_1 \right) + \sum_{i=2}^{n} f\left( X_{\sigma_n(i-1)}, \varepsilon_{i-1} \right) f\left( X_{\sigma_n(i)}, \varepsilon_i \right) \nonumber \\
    &= Z_n(f) - f\left( X_{\sigma_n(n)}, 0 \right) f\left( X_{\sigma_n(1)}, \varepsilon_1 \right)
           + f\left( X_{\sigma_n(n)}, \varepsilon_n \right) f\left( X_{\sigma_n(1)}, \varepsilon_1 \right) \nonumber \\
    &= Z_n(f) + \mathcal{O}(\|f\|_\infty^2) \ .
    \label{eq:Tn_vs_Zn}
\end{align}

In order to build a suitable martingale, we set for $n\in\mathbb N$ and \( 0\leq j\leq n \)
\begin{align}
    Z^{(n)}_j(f) &= \sum_{i=1}^j f(X_{\sigma_n(i-1)}, \varepsilon_{i-1}) f(X_{\sigma_n(i)}, \varepsilon_i) \ ,
    \label{eq:Z_mathfrak_j_n}
\end{align}
so that $Z^{(n)}_n(f) = Z_n(f)$.

\begin{remark}
It is worth noting that we shall deal with a martingale triangular array. 
In this context, saying that a process \( (M_j^{(n)}) \) is an $\F$-martingale triangular array means that we have a sequence (indexed by $n$) of martingales, that each have a finite time horizon $0 \leq j \leq n$. In formulas, we have
\[
\E\left[ M_{j+1}^{(n)} \mid \mathcal{F}_j \right] = M_j^{(n)}, \quad \text{for } 0 \leq j < n \ .
\]

Also, note that due to the periodicity conditions imposed on the permutation $(\sigma_n)$, there is no simple connection between $\sigma_n$ and $\sigma_{n+1}.$
\end{remark}

\paragraph{The other terms.} In order to handle all the terms appearing in the estimator \eqref{eq:Sobol_estimator_rank}, we need a joint approximate equality in law similar to Eq.~\eqref{eq:Tn_vs_Zn}. It is simply given by
\begin{align}
    \label{eq:vector_eqlaw}
    \begin{pmatrix}
    T_n(f) \\ \sum_{i=1}^n Y_i \\ \sum_{i=1}^n Y_i^2
    \end{pmatrix}
    \stackrel{\Lc}{=}
    \begin{pmatrix}
    Z_n(f) + \Oc( \|f\|_\infty^2 ) \\ \sum_{i=1}^n f(X_{\sigma_n(i)}, \varepsilon_i) \\ \sum_{i=1}^n f(X_{\sigma_n(i)}, \varepsilon_i)^2
    \end{pmatrix}
    \ .
\end{align}

The RHS in Eq. \eqref{eq:vector_eqlaw} will be the main object of study in order to prove Theorem \ref{thm:limit_cvm}. To that end, we shall use the notation
\begin{align}
    \label{eq:def_theta_n}
    \widehat{\theta}_n(f) & :=
    \frac{1}{n}
    \begin{pmatrix}
    Z_n(f) + \Oc( \|f\|_\infty^2 ) \\
    S_n(f) \\
    S_n(f^2)
    \end{pmatrix} \\
    & :=
    \frac{1}{n}
    \begin{pmatrix}
    \sum_{i=1}^n f(X_{\sigma_n(i-1)}, \varepsilon_{i-1}) f(X_{\sigma_n(i)}, \varepsilon_i) 
    + \Oc( \|f\|_\infty^2 )
    \\
    \sum_{i=1}^n f(X_{\sigma_n(i)}, \varepsilon_i) \\
    \sum_{i=1}^n f^2(X_{\sigma_n(i)}, \varepsilon_i)
    \end{pmatrix} \ \nonumber \ .
\end{align}
We will see in Proposition \ref{proposition:consistency} that $\widehat{\theta}_n(f)$ is a consistent ``estimator''\footnote{Because of the equality in law \eqref{eq:vector_eqlaw}, $\widehat{\theta}_n(f)$ is not an estimator in the strict sense as it is not necessarily a function of the original sample $\left( X_i, Y_i \right)_{1 \leq i \leq n}$.} of
\begin{align}
    \label{eq:def_theta_star}
    \theta^*(f) & :=
    \begin{pmatrix}
    \E[ \E[Y|X]^2 ] \\
    \E[Y] \\
    \E[Y^2]
    \end{pmatrix} \ .
\end{align}

Let us present a sketch of proof which reveals the general strategy of proof that is fully developed in Section \ref{section:proof_univariate}.

\begin{proof}[Sketch of proof] 
Using (approximate) Doob decompositions, we shall obtain an expression of the form $Z_n(f)+ \Oc( \|f\|_\infty^2 ) = R_n + A_n^{(n)} + M_n^{(n)}$, for all $n\in\mathbb N$ where $(M_j^{(n)})$ is an $\F$-martingale, $A_n^{(n)}$ is $\Fc_0$ measurable and $R_n$ is a remainder.

This way, we shall obtain the following decomposition.
\begin{align*}
\widehat{\theta}_n(f) & =
\frac{1}{n}
\begin{pmatrix}
Z_n(f) + \Oc( \|f\|_\infty^2 ) \\
S_n(f) \\
S_n(f^2)
\end{pmatrix} \\
&=
\frac{1}{n}
\begin{pmatrix}
R_n^{(n)}(f) + A_n^{(n)}(f) + M_n^{(n)}(f) \\
\sum_{i=1}^{n} \E_\varepsilon\big(f(X_{\sigma_n(i)}, \varepsilon)\big) + \sum_{i=1}^{n} \left( f(X_{\sigma_n(i)}, \varepsilon_i) - \E_\varepsilon\big(f(X_{\sigma_n(i)}, \varepsilon)\big) \right)  \\
\sum_{i=1}^{n} \E_\varepsilon\big(f^2(X_{\sigma_n(i)}, \varepsilon)\big) + \sum_{i=1}^{n} \left( f^2(X_{\sigma_n(i)}, \varepsilon_i) - \E_\varepsilon\big(f^2(X_{\sigma_n(i)}, \varepsilon)\big) \right)
\end{pmatrix} \ .
\end{align*}
Note that it is easy to check that the two terms $$\sum_{i=1}^{n} \left( f(X_{\sigma_n(i)}, \varepsilon_i) - \E_\varepsilon\big(f(X_{\sigma_n(i)}, \varepsilon)\big) \right),\quad \sum_{i=1}^{n} \left( f^2(X_{\sigma_n(i)}, \varepsilon_i) - \E_\varepsilon\big(f^2(X_{\sigma_n(i)}, \varepsilon)\big) \right)$$ are $\F$-martingales. Pushing further, we shall finally obtain an expression of the following form
\begin{align}
& \widehat{\theta}_n(f) =
\frac{1}{n}
\begin{pmatrix}
Z_n(f) \\
S_n(f) \\
S_n(f^2)
\end{pmatrix}  \ \nonumber \\
 & =
\frac{1}{n}
\begin{pmatrix}
R_n^{(n)}(f) \\
0 \\
0
\end{pmatrix}
+
\frac{1}{n}
\begin{pmatrix}
\sum_{i=1}^n \E_\varepsilon\big(f(X_{\sigma_n(i)}, \varepsilon)\big)^2 \\
\sum_{i=1}^n \E_\varepsilon\big(f(X_{\sigma_n(i)}, \varepsilon)\big) \\
\sum_{i=1}^n \E_\varepsilon\big(f^2(X_{\sigma_n(i)}, \varepsilon)\big)
\end{pmatrix}
+
\frac{1}{n}
\begin{pmatrix}
M_n^{(n)}(f) \\
\sum_{i=1}^n \left( f(X_{\sigma_n(i)}, \varepsilon_i) - \E_\varepsilon\big(f(X_{\sigma_n(i)}, \varepsilon)\big) \right) \\
\sum_{i=1}^n \left( f^2(X_{\sigma_n(i)}, \varepsilon_i) - \E_\varepsilon\big(f^2(X_{\sigma_n(i)}, \varepsilon)\big) \right)
\end{pmatrix} \  \nonumber\\
& =
\frac{1}{n}
\begin{pmatrix}
R_n^{(n)}(f) \\
0 \\
0
\end{pmatrix}
+
\frac{1}{n}
\underbrace{
\begin{pmatrix}
\sum_{i=1}^n \E_\varepsilon\big(f(X_i, \varepsilon)\big)^2 \\
\sum_{i=1}^n \E_\varepsilon\big(f(X_i, \varepsilon)\big) \\
\sum_{i=1}^n \E_\varepsilon\big(f^2(X_i, \varepsilon)\big)
\end{pmatrix}}_{\text{(1)}=\Ac_n^{(n)}(f)}
+
\frac{1}{n}
\underbrace{
\begin{pmatrix}
M_n^{(n)}(f) \\
\sum_{i=1}^n \left( f(X_{\sigma_n(i)}, \varepsilon_i) - \E_\varepsilon\big(f(X_{\sigma_n(i)}, \varepsilon)\big) \right) \\
\sum_{i=1}^n \left( f^2(X_{\sigma_n(i)}, \varepsilon_i) - \E_\varepsilon\big(f^2(X_{\sigma_n(i)}, \varepsilon)\big) \right)
\end{pmatrix}}_{\text{(2)}=\Mc_n^{(n)}(f)} \ , \label{eq:decomp_sketch}
\end{align}
where $R_n^{(n)}(f)$ is a remainder term.
The term \textbf{(1)} in Eq.~\eqref{eq:decomp_sketch} is \( \mathcal{F}_0 \)-measurable, 
while the term \textbf{(2)} is a square-integrable, vector-valued \( \mathbb{F} \)-martingale. 
Regarding the limit theorems, we first observe that term \textbf{(1)} satisfies a strong law of large numbers for i.i.d. random variables. 
On the other hand, term \textbf{(2)} being a vector-valued \( \mathbb{F} \)-martingale, will satisfy a strong law of large numbers for martingales.

Furthermore, we shall prove that term \textbf{(1)} satisfies a central limit theorem with asymptotic covariance matrix $\Sigma_0(f,f)$, 
while term \textbf{(2)} satisfies a central limit theorem with asymptotic covariance matrix $\Sigma_1(f,f)$. 
Although the combination of these two results is not entirely immediate, a conditioning argument allows us to conclude that 
the full decomposition in Eq.~\eqref{eq:decomp_sketch} satisfies a central limit theorem with covariance matrix \( \Sigma_0(f,f) + \Sigma_1(f,f) \).
\end{proof}

\subsection{Multivariate Sobol' Estimator}
This section is devoted to the multivariate setting, where we consider a random vector $Y=f(X,\varepsilon)$ taking values in $\mathbb{R}^d$, that is,
$$
Y=(Y^1,\ldots,Y^d)=(f_1(X,\varepsilon),\ldots,f_d(X,\varepsilon))\ .
$$
Such a representation of the pair $(Y,X)$ as $Y=f(X,\varepsilon)$ is again justified by Theorem \ref{thm:transfer}, which remains valid in the multivariate framework. The Sobol' estimators in $\mathbb{R}^d$, introduced in Eq.~\eqref{eq:Sobol_estimator_rank}, are defined componentwise by
\begin{align}
\widehat{\rho}_n^{\mathrm{Sobol'}}(f_k) & := \frac{ \displaystyle{\frac{1}{n} \sum_{i=1}^n Y_i^k Y_{N_n(i)}^k - \left( \frac{1}{n} \sum_{i=1}^n Y_i^k \right)^2} }
                { \displaystyle{\frac{1}{n} \sum_{i=1}^n (Y_i^k)^2 - \left( \frac{1}{n} \sum_{i=1}^n Y_i^k \right)^2} } \ , k=1,\ldots,d\,.
\label{eq:Sobol_phi_estimator_rank}
\end{align}
This particularly estimates the vector
\begin{align}
\left(
\rho^{\mathrm{Sobol'}}(f_k)
:= 
\frac{ \E\left[ \E[  f_k(X,\varepsilon) | X ]^2 \right] - \E[ f_k(X,\varepsilon)]^2 }
     { \E[f_k^2(X,\varepsilon)] - \E[f_k(X,\varepsilon)]^2 } 
\ ; k=1,\ldots,d\,.
\right) \ , 
    \label{eq:Sobol_phi}
\end{align}
We denote for all $k=1,\ldots,d$
$$
\varphi_k(x) := \E_\varepsilon[f_k(x,\varepsilon)]=\E[Y^k\vert X=x]  \ .
$$

We can now state the following.
\begin{theorem} 
\label{thm:limit_sobol_multi}
Define for $k=1,\ldots,d$
\begin{align}
\label{eq:def_Delta_nk}
\Delta_n(f_k) := & \ \E \left[\left( \varphi_k(X_{1}) - \varphi_k(X_{N_n(1)}) \right)^2\right]
= \frac{1}{n} \E\left[  \sum_{i=1}^n \left( \varphi_k(X_{\sigma_n(i-1)}) - \varphi_k(X_{\sigma_n(i)}) \right)^2 \right] \ .
\end{align}
We have the convergence to a Gaussian vector
\begin{align*}
    \left( \ \sqrt{n} \left[ \widehat{\rho}_n^{\mathrm{Sobol'}}(f_k)
    - \left( \rho^{\mathrm{Sobol'}}(f_k) - \frac{\Delta_n(f_k)}{2\Var(Y^k)} \right) \right]
    \ ; \ k=1,\ldots,d \ \right)
    & \stackrel{\Lc}{\longrightarrow} 
 \ \Nc(0,\Gamma^d)  \ ,
\end{align*}
where $\Gamma^d=(\Gamma^d_{k,\ell})$ with prescribed covariance
$$
\Gamma^d_{k,\ell} :=
v(f_k)^\top
\left[ \Sigma_{0}( f_k,  f_\ell)+\Sigma_{1}( f_k, f_\ell)\right]
v( f_\ell) \ ,
$$
with
\begin{align}
v(f_j) & := \frac{1}{\operatorname{Var}(f_j(X,\varepsilon))} 
       \begin{pmatrix} 
        1 \\ 2 \E[f_j(X,\varepsilon)]( \rho^{\mathrm{Sobol'}}(f_j) - 1) \\ - \rho^{\mathrm{Sobol'}}(f_j)
       \end{pmatrix} \ .
\end{align}
In particular, if  $\lim_{n \rightarrow \infty} \sqrt{n} \Delta_n(f_k) = 0$ for all $k=1,\ldots,d$, then \begin{align*}
    \left( \ \sqrt{n} \left[ \widehat{\rho}_n^{\mathrm{Sobol'}}(f_k)
    -  \rho^{\mathrm{Sobol'}}(f_k) \right]
    \ ; \ k=1,\ldots,d \ \right)
    & \stackrel{\Lc}{\longrightarrow} 
 \ \Nc(0,\Gamma^d)  \ ,
\end{align*}
 This is for example implied by $\varphi_k$ having a bounded $2$-variation for all $k=1,\ldots,d$.
\end{theorem}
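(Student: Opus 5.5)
The plan is to run the univariate argument of Theorem \ref{thm:limit_sobol} jointly in $k$, using Cramér--Wold. The crucial point is that all $f_k$ share the same inputs $X_i$, hence the same permutation $\sigma_n$, and the same noises $\varepsilon_i$. So a single exchangeability step gives the joint approximate equality in law \eqref{eq:vector_eqlaw} simultaneously for all $k$. Concretely, the $3d$-vector $(T_n(f_k),\sum_i Y_i^k,\sum_i (Y_i^k)^2)_{k}$ has the same law as $(Z_n(f_k)+\Oc(\|f_k\|_\infty^2), S_n(f_k), S_n(f_k^2))_k$. This holds because one relabels $\varepsilon_{\sigma_n(i)}\mapsto\varepsilon_i$ by the same permutation in every coordinate. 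It therefore suffices to prove a joint CLT for the stacked vector $(\widehat\theta_n(f_k))_{1\le k\le d}$ around the centring $\theta^*(f_k) - (\Delta_n(f_k)/2,0,0)^\top$, and then to apply the delta method.

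For each $k$ I would reuse the decomposition from the sketch of proof of Theorem \ref{thm:limit_sobol}:
$$\widehat\theta_n(f_k)=\tfrac1n R_n^{(n)}(f_k)e_1+\tfrac1n\Ac_n^{(n)}(f_k)+\tfrac1n\Mc_n^{(n)}(f_k).$$
The remainder is handled coordinatewise as in the univariate proof, namely $n^{-1/2}(R_n^{(n)}(f_k)+n\Delta_n(f_k)/2)\to0$ in probability, so nothing new is needed there. The $\Fc_0$-measurable part $\sum_i(\varphi_k(X_i)^2,\varphi_k(X_i),\E_\varepsilon f_k^2(X_i,\varepsilon))$ is a sum of i.i.d.\ bounded vectors indexed jointly in $k$. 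The multivariate CLT then gives the cross-covariance blocks $\Sigma_0(f_k,f_\ell)$ directly from definition \eqref{eq:cov_sigma0_fg}.

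For the martingale part, fix $\lambda=(\lambda_k)\in(\R^3)^d$ and consider the scalar $\F$-martingale triangular array $\sum_k\lambda_k^\top\Mc_j^{(n)}(f_k)$. I would apply the martingale CLT conditionally on $\Fc_0$. Since $f$ is bounded, the Lindeberg condition is trivial. The main obstacle is the convergence of the conditional bracket $\frac1n\sum_j\E[\Delta M_j(f_k)\Delta M_j(f_\ell)^\top\mid\Fc_{j-1}]$ to $\Sigma_1(f_k,f_\ell)$ for $k\ne\ell$. The increment of $M^{(n)}(f_k)$ has the form (up to the approximate Doob corrections) $f_k(X_{\sigma_n(j-1)},\varepsilon_{j-1})(f_k(X_{\sigma_n(j)},\varepsilon_j)-\varphi_k(X_{\sigma_n(j)}))+\varphi_k(X_{\sigma_n(j+1)})(f_k(X_{\sigma_n(j)},\varepsilon_j)-\varphi_k(X_{\sigma_n(j)}))$. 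Its conditional cross-moment with the $\ell$ analogue therefore involves $\Cov_\varepsilon(f_k,f_\ell)(X_{\sigma_n(j)})$ multiplied by products $f_k(X_{\sigma_n(j-1)},\varepsilon_{j-1})f_\ell(X_{\sigma_n(j-1)},\varepsilon_{j-1})$ and by neighbour values $\varphi_k\varphi_\ell$. I would show these sums converge a.s.\ to $\E[(2\varphi_k\varphi_\ell+\Cov_\varepsilon(f_k,f_\ell))\Cov_\varepsilon(f_k,f_\ell)](X)$ and to the analogous Hadamard entries. This uses two tools. The first is the martingale SLLN, to replace $f_kf_\ell(X_{\sigma_n(j-1)},\varepsilon_{j-1})$ by its $\varepsilon$-mean. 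The second is the fact that $\Delta_n(f_k)\to0$ (already established in Theorem \ref{thm:limit_sobol}), used via Cauchy--Schwarz to replace $\varphi_k(X_{\sigma_n(j\pm1)})$ by $\varphi_k(X_{\sigma_n(j)})$. The $S_n$ coordinates give plain conditional covariances, which yields exactly the entries of $\Sigma_a(X,f_k,f_\ell)\odot\Sigma_b(X,f_k,f_\ell)$, i.e.\ $\Sigma_1(f_k,f_\ell)$. This is the univariate computation of Section \ref{section:proof_univariate} with one copy of $f$ replaced by $g$.

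Finally, I would combine the two parts by the same conditioning argument as in the univariate case. The conditional characteristic function of the martingale part given $\Fc_0$ converges a.s.\ to $\exp(-\tfrac12\lambda^\top\Sigma_1\lambda)$, while the $\Fc_0$ part is asymptotically $\Nc(0,\lambda^\top\Sigma_0\lambda)$. Their product gives covariance $\Sigma_0+\Sigma_1$ blockwise. The delta method, applied to the map $(a,b,c)\mapsto (a-b^2)/(c-b^2)$ in each block $k$ with gradient $v(f_k)$ at $\theta^*(f_k)$, then yields $\Gamma^d_{k,\ell}=v(f_k)^\top[\Sigma_0+\Sigma_1](f_k,f_\ell)v(f_\ell)$. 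The bias shift is $-\Delta_n(f_k)/(2\Var(Y^k))$, since the first gradient component is $1/\Var(Y^k)$. The last claim follows because $\sqrt n\Delta_n(f_k)\to0$ for each $k$, which bounded $2$-variation of $\varphi_k$ implies exactly as in Theorem \ref{thm:limit_sobol}.
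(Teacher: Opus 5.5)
Your proposal is sound and is essentially the paper's proof. The paper likewise stacks the $3d$-vector $(\widehat\theta_n(f_k))_k$, reuses Proposition \ref{proposition:double_doob} coordinatewise, takes the blocks $\Sigma_0(f_k,f_\ell)$ from the i.i.d.\ CLT and $\Sigma_1(f_k,f_\ell)$ from the pairwise bracket computation of Lemma \ref{lemma:preliminary2} (already stated for pairs $(f,g)$), combines them via Lemma \ref{lemma:preliminary3}, and concludes with a blockwise delta method.

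There is one slip in your bracket expansion. You kept $f_kf_\ell(X_{\sigma_n(j-1)},\varepsilon_{j-1})$ and $\varphi_k\varphi_\ell(X_{\sigma_n(j+1)})$ but dropped the mixed terms $f_k(X_{\sigma_n(j-1)},\varepsilon_{j-1})\varphi_\ell(X_{\sigma_n(j+1)})$ and its $k\leftrightarrow\ell$ counterpart. These contribute another $2\varphi_k\varphi_\ell$, so the $(1,1)$ limit is $\E\bigl[(4\varphi_k(X)\varphi_\ell(X)+c_{k\ell}(X))\,c_{k\ell}(X)\bigr]$ with $c_{k\ell}(x)=\Cov_\varepsilon[f_k(x,\varepsilon),f_\ell(x,\varepsilon)]$. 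This is exactly the entry of $\Sigma_a\odot\Sigma_b$ you then correctly invoke. Also, the neighbour replacement via $\Delta_n\to0$ and Cauchy--Schwarz gives convergence in probability rather than a.s. That weaker mode is all the martingale CLT and the conditioning step require, via a subsequence argument for the conditional characteristic function.
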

\begin{remark}
\label{remark:field_meaning}
This statement can be equivalently formulated using a family of functions $\Phi$. We would have the convergence to a Gaussian field
\begin{align*}
    \left( \ \sqrt{n} ( \widehat{\rho}_n^{\mathrm{Sobol'}}(\phi)
    - \left( \rho^{\mathrm{Sobol'}}(\phi) - \frac{\Delta_n(\phi)}{2\Var(Y)} \right)
    \ ; \ \phi \in \Phi \right)
    & \stackrel{\Lc}{\longrightarrow} 
    \left( \ \Nc(\phi) \ ; \ \phi \in \Phi \right) \ ,
\end{align*}
with prescribed covariance
$$
\Cov\left[ \Nc(\phi), \Nc(\psi) \right] :=
v(\phi)^\top
\left[ \Sigma_{0}( \phi, \psi)+\Sigma_{1}( \phi, \psi)\right]
v(\psi) \ .
$$
This convergence in distribution is to be understood only at the level of finite-dimensional marginals, that is, for every finite subfamily of test functions. Establishing the existence of a genuine process indexed by $\Phi$ would require an additional analysis of appropriate regularity properties. Since we do not have a motivated example in mind, we do not pursue this question here. 
\end{remark}
\begin{proof}[Sketch of proof]
The proof of the univariate case essentially carries over verbatim. This is only possible because of two elements: (1) Gaussian fluctuations are determined by a covariance structure. (2) We already chose notations that preempt the multivariate setting such as $\Sigma_0(f,g)$ and $\Sigma_1(f,g)$. 
Details are in Section \ref{section:proof_multivariate}.
\end{proof}

\subsection{Cramér–von Mises case}

\paragraph{Rewriting the Cramér–von Mises index.}
We now simplify Eq.~\eqref{def:index_cramer2} under the assumption that $F_Y$ is continuous. If $Y$ has a density, the change of variable formula gives
$$
\int_{\mathbb{R}} h(t) \, dF_Y(t) = \int_0^1 h(F_Y^{\langle -1 \rangle}(u)) \, du \ ,
$$
for any bounded measurable function \( h \). More generally, $F_Y(Y)$ is uniform on $[0,1]$ as soon as $F_Y$ is continuous.

We apply this to both the numerator and denominator of Eq.~\eqref{def:index_cramer2}.

{\it Denominator.} We write
$$
\int_{\mathbb{R}} F_Y(t)(1 - F_Y(t)) \, dF_Y(t) = \int_0^1 u(1 - u) \, du  = \frac{1}{6} \ .
$$

{\it Numerator.} We use the identity
$$
\E\left[ (F_{Y|X}(t))^2 \right] = \E\left[ \left( \P(Y \leq t \mid X) \right)^2 \right] \ ,
\quad \text{and} \quad
\E[F_{Y|X}(t)] = F_Y(t) \ .
$$
Then,
\begin{align*}
\E \left[ \left( F_{Y|X}(t) - F_Y(t) \right)^2 \right]
&= \E[(F_{Y|X}(t))^2] - 2 F_Y(t) \E[F_{Y|X}(t)] + F_Y(t)^2 \\
&= \E[(F_{Y|X}(t))^2] - 2F_Y(t)^2 + F_Y(t)^2 \\
&= \E[(F_{Y|X}(t))^2] - F_Y(t)^2 \ .
\end{align*}

Therefore, the numerator becomes
$$
\int_{\mathbb{R}} \left( \E[(F_{Y|X}(t))^2] - F_Y(t)^2 \right) \, dF_Y(t)
= \int_{\mathbb{R}} \E[(F_{Y|X}(t))^2] \, dF_Y(t) - \int_{\mathbb{R}} F_Y(t)^2 \, dF_Y(t)
$$

Again, using the change of variable \( u = F_Y(t) \), we find
$$
\int_{\mathbb{R}} F_Y(t)^2 \, dF_Y(t) = \int_0^1 u^2 \, du = \frac{1}{3} \ .
$$

Hence, the numerator reduces to
$$
\int_{\mathbb{R}} \E[F_{Y|X}(t)^2] \, dF_Y(t) - \frac{1}{3} \ .
$$

Combining numerator and denominator, we obtain the simplified form
\begin{align}
\rho^{\mathrm{CvM}} 
= \frac{ \int_{\mathbb{R}} \E[F_{Y|X}(t)^2] \, dF_Y(t) - \frac{1}{3} }{ \frac{1}{6} }
= 6 \int_{\mathbb{R}} \E[F_{Y|X}(t)^2] \, dF_Y(t) - 2 \ .
\label{eq:cvm_simplified}
\end{align}

Therefore, under our assumptions, estimating \( \rho^{\mathrm{CvM}} \) reduces to estimating the scalar quantity
$$
T^{\mathrm{CvM}} := \int_{\mathbb{R}} \E[F_{Y|X}(t)^2] \, dF_Y(t) \ .
$$

For each threshold \( t \in \mathbb{R} \), define the binary indicator $\Upsilon_i(t) := \mathds{1}_{\{ Y_i \leq t \}}$.
Let \( \sigma_n \) denote the order permutation of the inputs \( \left( X_i \right)_{1 \leq i \leq n} \) defined by 
$$
X_{\sigma_n(1)} \leq \cdots \leq X_{\sigma_n(n)} \ .
$$
To approximate the expectation 
\begin{align}
\label{eq:def_T}
T(t) := & \ \E[F_{Y|X}(t)^2] \ ,    
\end{align}
we define the rank-based empirical statistic
\begin{align}
\label{eq:Tn_t_fixed}
T_n(t) := & \ \frac{1}{n} \sum_{i=1}^{n} \Upsilon_{\sigma_n(i)}(t) \cdot \Upsilon_{\sigma_n(i+1)}(t) \\
        = & \ \frac{1}{n} \sum_{i=1}^{n} \mathds{1}_{ Y_{\sigma_n(i)} \leq t } \mathds{1}_{ Y_{\sigma_n(i+1)} \leq t } \nonumber \\
        = & \ \frac{1}{n} \sum_{i=1}^{n} \mathds{1}_{ \max ( Y_{\sigma_n(i)} , Y_{\sigma_n(i+1)} ) \leq t } \nonumber \\
        = & \ \frac{1}{n} \sum_{i=1}^{n} \mathds{1}_{ \max ( Y_{i} , Y_{N_n(i)} ) \leq t } \nonumber
\ .
\end{align}
The convention of cyclic structure for $\sigma_n$ still applies. This statistic relies on the assumption that close values of \( X \) induce similar conditional distributions \( F_{Y|X}(t) \).  
Hence, the product \( \Upsilon_{\sigma_n(i)}(t) \cdot \Upsilon_{\sigma_n(i+1)}(t) \) approximates \( F_{Y|X}(t)^2 \), and the average over all adjacent pairs provides a consistent estimate of its expectation.
We now define the full Cramér--von Mises rank-based statistic by integrating over \( t \) with respect to the empirical distribution \( F_n \) of \( (Y_i) \)
\begin{align}
\label{eq:Tn_CVM}
T_n^{\mathrm{CvM}} := \int_\mathbb{R} T_n(t) \, dF_n(t) 
= \frac{1}{n} \sum_{i=1}^{n} \int_\R \Upsilon_{\sigma_n(i)}(t) \Upsilon_{\sigma_n(i+1)}(t) \ dF_n(t) \ .
\end{align}

\paragraph{Exact relation to Chatterjee's rank estimator.}
Let \(R_j\in\{1,\dots,n\}\) be the (strict) rank of \(Y_j\) in the sample. From the previous definition Eq.~\eqref{eq:Tn_CVM}
\begin{align*}
T_n^{\mathrm{CvM}} 
  &= \int_\R T_n(t) dF_n(t) \\
  &= \frac{1}{n^2} \sum_{i=1}^{n}\sum_{k=1}^{n}
        \1_{\{Y_{\sigma_n(i)}\le Y_k\}}
        \1_{\{Y_{\sigma_n(i+1)}\le Y_k\}} \\
  &= \frac{1}{n(n-1)}\sum_{i=1}^{n}\sum_{k=1}^{n}
        \1_{\{Y_k \ge \max(Y_{\sigma_n(i)},Y_{\sigma_n(i+1)})\}} \\
  &= \frac{1}{n^2}\sum_{i=1}^{n}
        \left(n - \max\!\bigl(R_{\sigma_n(i)},R_{\sigma_n(i+1)}\bigr) + 1\right) \ .
\end{align*}
Recalling that $\max(a,b) = \half \left( a+b + |a-b| \right)$, we have
\begin{align*}
T_n^{\mathrm{CvM}} 
  & = \frac{1}{n^2}\sum_{i=1}^{n}
        \left(n + 1 - \half(R_{\sigma_n(i)} + R_{\sigma_n(i+1)} ) - \half | R_{\sigma_n(i+1)} - R_{\sigma_n(i)} | \right) \\
  & = \frac{n+1}{n} - \frac{1}{n^2} \sum_{i=1}^n R_i - \frac{1}{2 n^2} \sum_{i=1}^n | R_{\sigma_n(i+1)} - R_{\sigma_n(i)} | \\
  & = \frac{n+1}{n} - \frac{1}{n^2} \sum_{i=1}^n i - \frac{1}{2 n^2} \sum_{i=1}^n | R_{\sigma_n(i+1)} - R_{\sigma_n(i)} | \\
  & = \frac{n+1}{2n} - \frac{1}{2 n^2} \sum_{i=1}^n | R_{\sigma_n(i+1)} - R_{\sigma_n(i)} | \ .
\end{align*}
As such
\begin{align*}
\widehat{\rho}_n^{\mathrm{CvM}}
  & = 6 T_n^{\mathrm{CvM}}  - 2\\
  & = \frac{3(n+1)}{n} - 2 - \frac{3}{n^2} \sum_{i=1}^n | R_{\sigma_n(i+1)} - R_{\sigma_n(i)} | \\
  & = 1 + \frac{3}{n} - \frac{3}{n^2} \sum_{i=1}^n | R_{\sigma_n(i+1)} - R_{\sigma_n(i)} | \ .
\end{align*}
This expression is to be compared to Chatterjee's \cite[Eq. (1.1)]{chatterjee2021new} or rather the expression which follows, simpler in the absence of ties
\begin{align}
    \label{eq:chatterjee_def}
    \widehat{\rho}_n^{\mathrm{Chatterjee}} & := 1 - \frac{3}{n^2-1} \sum_{i=1}^{n-1} | r_{i+1} - r_i | \ .
\end{align}
Notice that Chatterjee uses a different convention for the right-most neighbor. Nevertheless, one can check that
$$
R_i \;=\; \#\{\, j \in \{1,\dots,n\} : Y_j \le Y_i \,\},\quad r_i \;=\; \#\{\, j \in \{1,\dots,n\} : Y_{\sigma_n(j)} \le Y_{\sigma_n(i)} \,\} \ .
$$
and thus $r_i = R_{\sigma_n(i)}$ for all $i\in\{1,\ldots,n\}$. But all in all $\widehat{\rho}_n^{\mathrm{Chatterjee}} - \widehat{\rho}_n^{\mathrm{CvM}} = \Oc\left( \frac1n \right)$, so that our setups do not differ in a meaningful way.

\paragraph{Statement.} Let us introduce the useful notation 
\begin{align}
    \label{eq:def_phi_tx}
    \varphi(t,x) := & \ \P\left( Y \leq t \mid X = x \right)
                  = \P\left( f(x, \varepsilon) \leq t \right) \ .
\end{align}
We are now in the position to express the limit theorem.
\begin{theorem}
\label{thm:limit_cvm}
Assume that $F_Y$ and $F_X$ are continuous (no atoms for the marginals). The rank-based estimator $\widehat{\rho}^{\mathrm{CvM}}_n$ is consistent in the sense that
\begin{align}
\lim_{n \to \infty} \widehat{\rho}^{\mathrm{CvM}}_n
=
\rho^{\mathrm{CvM}}
\ \quad \text{almost surely} \ .
    \label{eq:main_thm_cvm_LLN}
\end{align}

Furthermore, define
\begin{align}
    \label{eq:def_cvm_Delta}
    \Delta_n := & \
    \frac{1}{n}
    \sum_{i=1}^{n}
    \int_\R  
    \E\left[ \left[ 
  \varphi(t, X_{\sigma_n(i-1)})
  -
  \varphi(t, X_{\sigma_n(i)})
  \right]^2
  \right] dF_Y(t)\\
  = & 
  \int_\R \E\left[ 
  \left(
  \varphi(t, X_{1})
  -
  \varphi(t, X_{N_n(1)})
  \right)^2
  \right] dF_Y(t) \stackrel{n \rightarrow \infty}{\longrightarrow} 0 \nonumber \ . 
\end{align}

We have the convergence in law
$$
\sqrt{n}\left( \widehat{\rho}_n^{\mathrm{CvM}} - \rho^{\mathrm{CvM}} + 3 \Delta_n \right)
\stackrel{\Lc}{\longrightarrow}
\Nc( 0, \sigma^2_{\mathrm{CvM}} ) \ ,
$$
where 
\begin{align*}
    \frac{1}{36}\sigma^2_{\mathrm{CvM}} 
= & \ \int_\R \int_\R C_{\Xc, \Xc} dF_Y(t) dF_Y(s)
    + \int_\R \int_\R C_{\Yc, \Yc} dT(t) dT(s)
    -2\int_\R \int_\R C_{\Xc, \Yc} dF_Y(t) dT(s) \ ,
\end{align*}
and 
\begin{align*}
C_{\Xc,\Xc}(t,s)
=&\ \E\!\big[F_{Y|X}(t)^2F_{Y|X}(s)^2\big]
-\E\!\big[F_{Y|X}(t)^2\big]\E\!\big[F_{Y|X}(s)^2\big]\\&+\E\Bigl[
  \bigl(
    F_{Y|X}(t\wedge s)
    +3\,F_{Y|X}(t)\,F_{Y|X}(s)
  \bigr) \times
  \bigl(
    F_{Y|X}(t\wedge s)
    -F_{Y|X}(t)\,F_{Y|X}(s)
  \bigr)
\Bigr]\\
C_{\Xc,\Yc}(t,s)
=&\ \E\!\big[F_{Y|X}(s)^2F_{Y|X}(t)\big]
-\E\!\big[F_{Y|X}(s)^2\big]\E\!\big[F_{Y|X}(t)\big]\\
&+\E\Bigl[
  2\,F_{Y|X}(s) \times
  \bigl(
    F_{Y|X}(t\wedge s)
    -F_{Y|X}(s)\,F_{Y|X}(t)
  \bigr)
\Bigr]\\
C_{\Yc,\Yc}(t,s)
=&\ F_Y(t\wedge s)-F_Y(t)F_Y(s) \ .
\end{align*}
In particular, if  $\lim_{n \rightarrow \infty} \sqrt{n} \Delta_n = 0$, then $\sqrt n \left[ \widehat{\rho}_n^{\mathrm{CvM}} - \rho^{\mathrm{CvM}} \right]$
has the same fluctuations, i.e.
\begin{align}
    \sqrt n \left[ \widehat{\rho}_n^{\mathrm{CvM}}
        -  \rho^{\mathrm{CvM}}
        \right] &\xrightarrow{\Lc} \mathcal{N}(0, \sigma^2_{\mathrm{CvM}}) \ .
\end{align}
This is for example implied by $\varphi(t,\cdot)$ having a bounded quadratic variation (also known as the $2$-variation), uniformly in $t$.
\end{theorem}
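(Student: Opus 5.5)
We reduce the CvM statement to the multivariate Sobol' result, Theorem~\ref{thm:limit_sobol_multi}, or rather to its field version in Remark~\ref{remark:field_meaning}. For each threshold $t$ we take the bounded function $f_t(x,\varepsilon) := \1_{\{f(x,\varepsilon)\le t\}}$. Then $T_n(t)$ in \eqref{eq:Tn_t_fixed} is exactly $\frac1n T_n(f_t)$, and
\[
\E_\varepsilon[f_t(X,\varepsilon)]=\varphi(t,X), \qquad \E_\varepsilon[f_t^2]=\E_\varepsilon[f_t].
\]
So only the first two coordinates of $\widehat\theta_n(f_t)$ matter: $\frac1n T_n(f_t)$ and the empirical CDF $F_n(t)=\frac1n S_n(f_t)$. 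Since $F_Y(Y)$ is uniform, we may replace $Y$ by $U=F_Y(Y)$ and index thresholds by $u\in[0,1]$. The problem then becomes the analysis of the integral functional
\[
T_n^{\mathrm{CvM}}=\int T_n(t)\,dF_n(t)
\]
of the pair of processes $(T_n(\cdot),F_n(\cdot))$.

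\textbf{Step 1 (decomposition uniformly in $t$).} We apply the decompositions of Section~\ref{section:process_decomposition} to $f_t$ for every $t$ simultaneously. This gives
\[
T_n(t)-\tfrac{1}{n}\sum_i \varphi(t,X_i)^2 + \tfrac12\Delta_n(t) = \tfrac1n\big(\Mc_n(t) + R_n(t)\big),
\]
\[
F_n(t)-\tfrac1n\sum_i\varphi(t,X_i)=\tfrac1n\sum_i\big(\1_{\{Y_{\sigma_n(i)}\le t\}}-\varphi(t,X_{\sigma_n(i)})\big),
\]
where the second right-hand side is an $\F$-martingale. Here $\Delta_n(t)$ is the bias, and the identity $\E[(a-b)^2]=\E a^2+\E b^2-2\E ab$ explains why it enters with coefficient $\tfrac12$. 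Its integral against $dF_Y$ gives the $\Delta_n$ of \eqref{eq:def_cvm_Delta}, and the factor $6\cdot\tfrac12=3$ yields the $+3\Delta_n$ centering. Because every term is monotone in $t$ or a product of indicators, we can control remainders uniformly by a bracketing argument: $R_n(t)=O(1)$ pointwise by boundedness, and we need $\sup_t |R_n(t)|/\sqrt n\to0$ in probability.

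\textbf{Step 2 (linearization).} We write
\[
\int T_n\,dF_n - \int T\,dF_Y = \int (T_n-T)\,dF_Y + \int T\,d(F_n-F_Y) + \int (T_n-T)\,d(F_n-F_Y).
\]
The last term is $o_P(n^{-1/2})$ once we know that $\sqrt n(T_n-T)$ is tight in $\ell^\infty$ and that $F_n\to F_Y$ uniformly (Glivenko--Cantelli); for this we can integrate by parts and use the monotonicity of the indicator processes. Integrating by parts in the middle term gives $-\int (F_n-F_Y)\,dT$. Writing $\Xc_n(t):=\sqrt n(T_n(t)-T(t))$ and $\Yc_n(t):=\sqrt n(F_n(t)-F_Y(t))$, the fluctuation is therefore
\[
6\Big(\int \Xc_n\,dF_Y-\int\Yc_n\,dT\Big),
\]
which matches the structure of $\sigma^2_{\mathrm{CvM}}/36$.

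\textbf{Step 3 (covariances and conclusion).} Finite-dimensional convergence of $(\Xc_n,\Yc_n)$ follows from the proof of Theorem~\ref{thm:limit_sobol_multi}, applied to the family $(f_t)$, before the delta method. The resulting covariances are the corresponding entries of $\Sigma_0(f_t,f_s)+\Sigma_1(f_t,f_s)$, computed with $f_tf_s=f_{t\wedge s}$ and $\Cov_\varepsilon(f_t,f_s)=\varphi(t\wedge s)-\varphi(t)\varphi(s)$. Evaluating the Hadamard products should reproduce $C_{\Xc,\Xc}$, $C_{\Xc,\Yc}$ and $C_{\Yc,\Yc}$:
\begin{itemize}
\item In $C_{\Xc,\Xc}$, the $\Sigma_1$ part contributes $(4\varphi_t\varphi_s+\Cov_\varepsilon)\cdot\Cov_\varepsilon$, which equals $(\varphi_{t\wedge s}+3\varphi_t\varphi_s)(\varphi_{t\wedge s}-\varphi_t\varphi_s)$.
\item $C_{\Yc,\Yc}$ is the sum of the $\Sigma_0$ and $\Sigma_1$ entries, namely $\Cov(\varphi_t,\varphi_s)+\E[\varphi_{t\wedge s}-\varphi_t\varphi_s]=F_Y(t\wedge s)-F_Y(t)F_Y(s)$.
\end{itemize}
To pass from finite-dimensional convergence to the integrals against $dF_Y$ and $dT$, we take Riemann-type approximations on a finite grid $u_1<\dots<u_m$ in the uniform scale. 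The error is controlled by a bound of the form
\[
\E\Big|\int(\Xc_n-\Xc_n^{(m)})\,dF_Y\Big|^2\lesssim \omega(1/m),
\]
uniformly in $n$. This should follow from martingale $L^2$ (Burkholder) bounds on increments $f_t-f_s$, whose variance is at most $|F_Y(t)-F_Y(s)|$, together with the $\Fc_0$ i.i.d.\ parts. Consistency follows along the same lines, from the pointwise almost sure limits of Proposition~\ref{proposition:consistency} upgraded to uniformity in $t$ by monotonicity (a Glivenko--Cantelli-type argument).

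\textbf{Main obstacle.} We expect the hardest step to be the uniform-in-$t$ control: both the remainder $\sup_t|R_n(t)|=o_P(\sqrt n)$ and the tightness needed to kill the cross term $\int(T_n-T)\,d(F_n-F_Y)$. Our first attempt will be to avoid full tightness. We would expand that cross term as a double sum over $(i,k)$ and bound its second moment directly by $O(1/n)$ through the martingale structure conditional on $\Fc_0$. If that fails, we would fall back on bracketing with $O(\sqrt n)$ brackets in the uniform scale.
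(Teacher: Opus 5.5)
Your overall plan follows the paper. You use the same linearization \eqref{eq:Tn_CVM_decomposition} and obtain finite-dimensional convergence from the multivariate Sobol' machinery with $f_t=\1_{\{f\le t\}}$. Your $6\cdot\tfrac12=3$ bias bookkeeping is right, and so are your Hadamard-product checks for $C_{\Xc,\Xc}$ and $C_{\Yc,\Yc}$.

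The genuinely different step is how you pass from finite-dimensional laws to $\int\langle\vartheta_n,e_1\rangle\,dF_Y-\int\langle\vartheta_n,e_2\rangle\,dT$. The paper proves tightness of $\vartheta_n$ in $D$ by chaining and Freedman's inequality (Section~\ref{section:chaining}); you use grid approximation with an $L^2$ modulus. Your route is valid and much lighter, since only integrals against the probability measures $dF_Y$ and $dT$ are needed. From \eqref{eq:bracket_bound}, $\E|M_n^{(n)}(t)-M_n^{(n)}(s)|^2\le 32\,n\,|F_Y(t)-F_Y(s)|$, and the i.i.d.\ and empirical-CDF parts obey analogous bounds. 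Hence $\E|\vartheta_n(t)-\vartheta_n(s)|^2\le C|F_Y(t)-F_Y(s)|$ uniformly in $n$, which together with finite-dimensional convergence suffices. The paper's route gives more, namely functional convergence of $\vartheta_n$. You must apply yours to $\vartheta_n$ only, not to $\sqrt n(T_n-T)$, which also carries the bias and the remainder.

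There are two genuine gaps.

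First, the remainder. After subtracting the deterministic bias, $R_n(t)$ is \emph{not} $\Oc(1)$: it still contains the centered random sum $-\tfrac12\big(\sum_i(\varphi(t,X_{\sigma_n(i-1)})-\varphi(t,X_{\sigma_n(i)}))^2-n\Delta_n(t)\big)$. Showing this is $o(\sqrt n)$ is exactly Lemma~\ref{lemma:remainder_control}, whose proof needs the joint law of two nearest-neighbour pairs. You also do not need a $\sup_t$ here, and bracketing would not obviously give it, since $(\varphi(t,x)-\varphi(t,y))^2$ is not monotone in $t$. The $L^2$ bound of the lemma is uniform in $t$, so dominated convergence gives $\int(R_n-\E R_n)\,dF_Y=o_\P(\sqrt n)$, which is all the paper uses.

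Second, you do not control the cross term $\sqrt n\int(T_n-T)\,d(F_n-F_Y)$. Your primary route fails on three counts. $\sqrt n(T_n-T)$ is not tight in general, because its mean $-\tfrac{\sqrt n}{2}\Delta_n(t)+\Oc(n^{-1/2})$ may diverge, and the theorem does not assume $\sqrt n\Delta_n\to0$. Sup-norm tightness of the centered part would need both the chaining you bypassed and a uniform remainder bound you do not have. Integration by parts with monotonicity only gives the bound $2\sqrt n\sup_t|F_n-F_Y|=\Oc_\P(1)$, not $o_\P(1)$. Your fallback aims at the wrong rate: you need $\E[E_n^2]=o(1/n)$, and $\Oc(1/n)$ does not suffice. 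It also ignores that $T_n$ depends on the very points $Y_k$ where it is evaluated.

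The missing idea is the decoupling of Proposition~\ref{proposition:cvm_error}. Split off the deterministic $\E T_n-T$, whose contribution has second moment $\frac1n\int(\E T_n-T)^2\,dF_Y=o(1/n)$. Write the centered part as $\frac1n\sum_k\Lc(T_n,Y_k)$. Then replace $T_n$ by leave-one-out and leave-two-out versions $T_n^{-I}$, which are independent of $(Y_i)_{i\in I}$ and satisfy $|T_n-T_n^{-I}|\le 4/n$. After this, the off-diagonal terms vanish by conditional centering. The diagonal term is $o(1)$ by pointwise concentration. The corrections are of order $(n-1)\cdot\Oc(1/n)\cdot o(1)$. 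This decoupling combined with your $L^2$ modulus would also work.

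Finally, your almost-sure consistency claim is not justified. The limits in Proposition~\ref{proposition:consistency} concern the representation in law, and the remainder there vanishes only in $L^1$. So you get convergence in probability only (Remark~\ref{remark:cv_as_or_proba}). Almost-sure convergence needs concentration plus Borel--Cantelli, or Chatterjee's theorem via $\widehat\rho_n^{\mathrm{CvM}}-\widehat\rho_n^{\mathrm{Chatterjee}}=\Oc(1/n)$.
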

\begin{proof}
See Section \ref{section:proof_cvm}.
\end{proof}

\subsection{Further remarks}
\label{subsection:further_remarks}

\paragraph{On the expression of the asymptotic variances.} Lin--Han's result \cite[Theorem 1.1]{lin2022limit} gives an asymptotic variance which is not explicit. A statistical estimator and numerical estimates are provided in their Theorem 1.2 and Proposition 1.2. Likewise, in Kroll's approach \cite{kroll2024asymptotic}, based on mixing, the variance seems difficult to track.
In comparison, our method yields a particularly structured and streamlined expression for both asymptotic variances $\sigma^2_{\mathrm{Sobol'}}$ and $\sigma^2_{\mathrm{CvM}}$.

\medskip

\paragraph{Comparison with \cite{gamboa2022global}.}
The method developed in \cite{gamboa2022global} differs significantly from ours in the following aspects. 
On the one hand, our approach is more structural and allows one to generalize to other cases. 
Indeed, the matrices $\Sigma_0(f,f)$ and $\Sigma_1(f,f)$ emerge in a natural way from the underlying martingale structure. 
Furthermore, this martingale-based viewpoint provides a unified and flexible framework that extends seamlessly to a special multivariate setting and to the Cramér--von Mises case.
In all cases, the covariance has a similar structure.

On the other hand, we have optimal regularity hypotheses. Assuming uniformity of the $X_i$'s on $[0,1]$, the authors of \cite{gamboa2022global} center the order statistics $X_{\sigma_n(i)}$ around $i/(n+1)$ and use Taylor expansions to handle $f(X_{\sigma_n(i)}, \ \cdot\ )$. While effective, this imposes stronger smoothness assumptions on $f$ and requires very careful control of higher-order terms to obtain precise constants. The resulting complexity can make the derivation error-prone in practice, as evidenced by the erratum \cite{gamboa2023erratum} addressing issues in the asymptotic variance $\sigma^2_{\mathrm{Sobol'}}$. In future versions of the paper, we will include numerical simulations aimed at validating our formula.

\medskip

\paragraph{On non-asymptotic estimates.} In our opinion, the non-asymptotic concentration result of Chatterjee \cite[Lemma A.11 in supplementary material]{chatterjee2021new}, based on the McDiarmid inequality, is already sharp. Nevertheless, one could derive similar results from our decompositions and concentration of martingales. At this point, it is unclear which result would be more useful.

\medskip

\paragraph{Open directions.}
The estimation of nonlinear functionals of conditional expectations lies at the core of global sensitivity analysis \cite{da2021basics} and of methods designed to quantify dependence between random variables \cite{dette2013copula, chatterjee2021new}. This problem also naturally arises in the estimation of residual variance in general regression models \cite{devroye2018nearest}. The rank-based approach, initially introduced in \cite{chatterjee2021new} and subsequently extended to multidimensional conditioning in \cite{azadkia2021simple}, proves to be elegant and effective for constructing powerful estimators. However, as already emphasized in \cite{devroye2018nearest, broto2020variance}, a major drawback of such estimators is the emergence of bias, as the ambient dimension of the conditioning variable increases. This bias contaminates the convergence rate in the central limit theorem. To address this issue, recent works propose ad hoc bias-correction procedures \cite{azadkia2025bias, da2024efficient}. Our martingale-based approach is flexible and offers a sharp characterization of the fluctuation term. As such, one would hope that it can be combined with these bias-removal techniques, and pave the way for a complete asymptotic analysis of general nearest-neighbor estimators for nonlinear functionals of conditional expectations.

\section{Approximate and exact algebraic decompositions for \texorpdfstring{$\left(Z^{(n)}_j\right)_{0 \leq j \leq n}$}{Znj}}
\label{section:process_decomposition}

In this section, the main result is as follows.
\begin{proposition}[Approximate Decomposition]
\label{proposition:double_doob}
For each $n\in\mathbb N$ and \( 1\leq j\leq n \), we have the decomposition
$$
Z_j^{(n)}(f) = R_j^{(n)}(f) + A_j^{(n)}(f) + M_j^{(n)}(f) \ ,
$$
where
\begin{align}
    R_j^{(n)}(f)
:=  \Oc( \|f\|_\infty^2 )
-
\half
\sum_{i=1}^j \left( \E_\varepsilon[f(X_{\sigma_n(i-1)}, \varepsilon)] -
\E_\varepsilon[f(X_{\sigma_n(i)}, \varepsilon)] 
\right)^2
\ ,
\label{eq:main_residue_part}
\end{align}
and
\begin{align}
    A_j^{(n)}(f)
:= \sum_{i=1}^j \E_\varepsilon[f(X_{\sigma_n(i)}, \varepsilon)]^2 \ ,
\label{eq:main_preditable_part}
\end{align}
and 
\begin{align}
 M_j^{(n)}(f) & :=  \sum_{k=1}^{j} \left( f(X_{\sigma_n(k-1)}, \varepsilon_{k-1}) + \E_\varepsilon[f(X_{\sigma_n(k+1)}, \varepsilon)] \right)
     \cdot \Delta \Mk_k^{(n)}(f) 
\label{eq:main_martingale_part}
\end{align}
with the martingale increments defined for all \( 1\leq i\leq n \) by
\begin{align}
\Delta\Mk_i^{(n)}(f)
:= f(X_{\sigma_n(i)}, \varepsilon_i) - \E_\varepsilon[f(X_{\sigma_n(i)}, \varepsilon)] \ .
    \label{eq:martingale_increment}
\end{align}
Moreover, we have the following properties
\begin{itemize}
    \item \( M_j^{(n)}(f) \) is an $\F$-martingale,
    \item \( A_j^{(n)}(f) \) is $\Fc_0$-measurable.
    \item The implicit constant in the $\Oc$ is absolute.
\end{itemize}
\end{proposition}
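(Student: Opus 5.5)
We would prove the decomposition by a direct algebraic manipulation of each summand of $Z^{(n)}_j(f)$, carried out conditionally on $\Fc_0$. Throughout, write $a_i := f(X_{\sigma_n(i)},\varepsilon_i)$ and $m_i := \E_\varepsilon[f(X_{\sigma_n(i)},\varepsilon)]$, which is $\Fc_0$-measurable, so that $\Delta\Mk_i^{(n)}(f)=a_i-m_i$. Take the convention $a_0 = f(X_{\sigma_n(0)},0)$; since $\varepsilon_0=0$ is deterministic, $a_0$ is $\Fc_0$-measurable.

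\textbf{Step 1 (first Doob step).} We split each summand as
\[
a_{i-1}a_i = a_{i-1}(a_i-m_i) + a_{i-1}m_i .
\]
Because $a_{i-1}$ is $\Fc_{i-1}$-measurable and $\E[a_i-m_i\mid\Fc_{i-1}]=0$ (the variable $\varepsilon_i$ is independent of $\Fc_{i-1}$), the first piece is a martingale increment. For the second piece we write $a_{i-1}m_i = (a_{i-1}-m_{i-1})m_i + m_{i-1}m_i$. After the index shift $k=i-1$, the term $(a_{i-1}-m_{i-1})m_i$ becomes $\Delta\Mk_k^{(n)}(f)\,m_{k+1}$. This is again a martingale increment, since $m_{k+1}$ is $\Fc_0$-measurable.

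\textbf{Step 2 (collect terms).} Summing over $i=1,\dots,j$, the martingale pieces combine into $\sum_{k=1}^j (a_{k-1}+m_{k+1})\,\Delta\Mk_k^{(n)}(f)$, which is exactly $M_j^{(n)}(f)$. This reindexing leaves boundary terms. One is the $k=0$ contribution $(a_0-m_0)m_1$. The other is the mismatch at the top end: the $k=j$ term $\Delta\Mk^{(n)}_j m_{j+1}$ is included in $M_j^{(n)}(f)$ but does not come from the sum. Each boundary term is bounded by $4\|f\|_\infty^2$, so both go into the $\Oc(\|f\|_\infty^2)$ with an absolute constant.

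\textbf{Step 3 (polarization).} The remaining $\Fc_0$-measurable sum is $\sum_{i=1}^j m_{i-1}m_i$. We apply the polarization identity
\[
m_{i-1}m_i = \half m_{i-1}^2 + \half m_i^2 - \half (m_{i-1}-m_i)^2 .
\]
Summing, the squares telescope to $\sum_{i=1}^j m_i^2 + \half(m_0^2-m_j^2)$. This produces $A_j^{(n)}(f)$ and the quadratic residue in $R_j^{(n)}(f)$, plus a further boundary term bounded by $\|f\|_\infty^2$.

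\textbf{Martingale property.} The coefficient $a_{k-1}+m_{k+1}$ is $\Fc_{k-1}$-measurable and bounded, and the increment $\Delta\Mk_k^{(n)}(f)$ is bounded and centered given $\Fc_{k-1}$. This gives the $\F$-martingale property on $0\le j\le n$. The $\Fc_0$-measurability of $A_j^{(n)}(f)$ is immediate, since each $m_i$ depends only on the $X$'s.

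The only delicate point is the bookkeeping of the boundary terms created by the index shift and by the telescoping. One must check that the periodic convention $\sigma_n(0)=\sigma_n(n)$ and $\sigma_n(n+1)=\sigma_n(1)$ keeps all these terms uniformly bounded. We would simply list them explicitly and bound each by a multiple of $\|f\|_\infty^2$.
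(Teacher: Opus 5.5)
Your proposal is correct and is essentially the paper's argument: the split $a_{i-1}a_i = a_{i-1}(a_i-m_i) + (a_{i-1}-m_{i-1})m_i + m_{i-1}m_i$ is exactly the paper's double Doob decomposition (first with respect to $\F$, then with respect to the shifted filtration $(\Fc_{i-1})_i$). The paper passes to the stated form by the same regrouping that you make explicit: the polarization identity, plus moving $(a_0-m_0)m_1$, $-m_{j+1}\Delta\Mk_j^{(n)}(f)$ and $\tfrac12(m_0^2-m_j^2)$ into the $\Oc(\|f\|_\infty^2)$ term (with absolute constant at most $9/2$), which yields directly the genuine $\F$-martingale $\sum_{k\le j}(a_{k-1}+m_{k+1})\,\Delta\Mk_k^{(n)}(f)$ with predictable coefficients.
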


Proposition \ref{proposition:double_doob} is an immediate consequence of the upcoming exact algebraic decomposition of the process $\left(Z^{(n)}_j \ ; \ 0 \leq j \leq n \right)$, given in Proposition \ref{proposition:algebraic_double_doob}. When rearranging sums and grouping terms, it suffices to gather undesirable boundary terms in the remainder $R_j^{(n)}(f)$ where they fall under the $\Oc( \|f\|_\infty^2 )$ error.

This algebraic decomposition is based on the double application of the Doob decomposition theorem. The result holds without any approximation and allows us to clearly separate the predictable and martingale components of the dynamics under a natural filtration.

\begin{proposition}[Algebraic Doob Decomposition]
\label{proposition:algebraic_double_doob}
For each $n\in\mathbb N$ and \( 1\leq j\leq n \), we have the decomposition
\[
Z_j^{(n)}(f) = A_j^{(n)}(f) + M_j^{(n)}(f) \ ,
\]
where
\begin{align}
    A_j^{(n)}(f) 
= f(X_{\sigma_n(0)}, 0) \cdot \E_\varepsilon[f(X_{\sigma_n(1)}, \varepsilon)]
+ \sum_{i=2}^j \E_\varepsilon[f(X_{\sigma_n(i-1)}, \varepsilon)] \cdot \E_\varepsilon[f(X_{\sigma_n(i)}, \varepsilon)] \ ,
\label{eq:algebraic_preditable_part}
\end{align}
and $M_1^{(n)}(f)=f(X_{\sigma_n(0)}, 0)  \cdot \Delta \Mk_{1}^{(n)}(f)$, and for all $j\geq 2$
\begin{align}
  & \ M_j^{(n)} (f) \nonumber \\
= & \ \sum_{k=1}^{j-1} \left( f(X_{\sigma_n(k-1)}, \varepsilon_{k-1}) + \E_\varepsilon[f(X_{\sigma_n(k+1)}, \varepsilon)] \right) 
\cdot \Delta \Mk_k^{(n)}(f)+f(X_{\sigma_n(j-1)}, \varepsilon_{j-1}) \cdot \Delta \Mk_j^{(n)}(f) 
\label{eq:algebraic_martingale_part}
\end{align}
with the martingale increments $\Delta \Mk_i^{(n)}(f)$ being the same as before.
\end{proposition}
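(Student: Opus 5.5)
The plan is a two-step (``double'') Doob decomposition carried out term by term on the products defining $Z^{(n)}_j(f)$. The identity is purely algebraic, with no expectations or limits involved. To lighten notation, fix $n$ and write
\[
a_i := f(X_{\sigma_n(i)}, \varepsilon_i), \qquad m_i := \E_\varepsilon[f(X_{\sigma_n(i)}, \varepsilon)], \qquad \Delta \Mk_i^{(n)}(f) = a_i - m_i \ ,
\]
for $1 \le i \le n$. We also set $a_0 := f(X_{\sigma_n(0)}, 0)$, using the convention $\varepsilon_0 = 0$. Then $Z^{(n)}_j(f) = \sum_{i=1}^j a_{i-1} a_i$. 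The $m_i$ are $\Fc_0$-measurable, since they depend only on the $X$'s. The variable $a_0$ is also $\Fc_0$-measurable.

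\textbf{First Doob step (in the variable $\varepsilon_i$).} Write $a_i = m_i + \Delta\Mk_i^{(n)}(f)$ in the second factor of each product, which gives $a_{i-1}a_i = a_{i-1} m_i + a_{i-1}\Delta\Mk_i^{(n)}(f)$. The second piece is already a martingale-type increment. Its coefficient $a_{i-1}$ is $\Fc_{i-1}$-measurable. Moreover, $\E[\Delta\Mk_i^{(n)}(f)\mid \Fc_{i-1}] = 0$, because $\varepsilon_i$ is independent of $\Fc_{i-1}$ and $f$ is bounded.

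\textbf{Second Doob step (in the variable $\varepsilon_{i-1}$).} Decompose the remaining piece $a_{i-1}m_i$ once more.
\begin{itemize}
\item For $i = 1$, nothing is random beyond $\Fc_0$, so we keep $a_0 m_1$ as is.
\item For $i \ge 2$, write $a_{i-1} m_i = m_{i-1} m_i + m_i\,\Delta\Mk_{i-1}^{(n)}(f)$.
\end{itemize}
Summing over $i$ yields
\[
Z^{(n)}_j(f) = a_0 m_1 + \sum_{i=2}^j m_{i-1}m_i + \sum_{i=2}^{j} m_i \,\Delta\Mk_{i-1}^{(n)}(f) + \sum_{i=1}^j a_{i-1}\,\Delta\Mk_i^{(n)}(f) \ .
\]
The first two sums are exactly $A_j^{(n)}(f)$ in \eqref{eq:algebraic_preditable_part}.

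\textbf{Regrouping the martingale part.} Reindex the third sum with $k = i-1$, so it becomes $\sum_{k=1}^{j-1} m_{k+1}\Delta\Mk_k^{(n)}(f)$. Then merge it with the last sum, collecting the coefficient of each increment $\Delta\Mk_k^{(n)}(f)$:
\begin{itemize}
\item for $k \le j-1$ the coefficient is $a_{k-1} + m_{k+1}$;
\item for $k = j$ only $a_{j-1}$ survives.
\end{itemize}
This is precisely \eqref{eq:algebraic_martingale_part}. For $j = 1$ the third sum is empty, and we recover $M_1^{(n)}(f) = a_0\,\Delta\Mk_1^{(n)}(f)$.

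I do not expect a genuine obstacle here. The only delicate points are bookkeeping: the boundary index $i=1$, where $\varepsilon_0 = 0$ breaks the pattern, and the last increment $k=j$, whose coefficient lacks the $m_{j+1}$ term.

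It is worth stressing this last point, since it explains why Proposition~\ref{proposition:double_doob} is only approximate. For fixed $j$, every summand of $M^{(n)}_j(f)$ has the form (an $\Fc_{k-1}$-measurable bounded coefficient) $\times\, \Delta\Mk_k^{(n)}(f)$. However, the coefficient of $\Delta\Mk_j^{(n)}(f)$ changes when passing from $j$ to $j+1$: the term $m_{j+1}\Delta\Mk_j^{(n)}(f)$ is added. Consequently the genuine martingale is the transform \eqref{eq:main_martingale_part}. It differs from \eqref{eq:algebraic_martingale_part} by the single term $m_{j+1}\Delta\Mk_j^{(n)}(f) = \Oc(\|f\|_\infty^2)$.

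Deducing Proposition~\ref{proposition:double_doob} from the algebraic decomposition then requires two further manipulations.
\begin{enumerate}
\item Replace $a_0 m_1$ by $m_0 m_1$ and absorb this last correction into the $\Oc(\|f\|_\infty^2)$ term.
\item Apply the polarization identity $m_{i-1}m_i = \tfrac12(m_{i-1}^2 + m_i^2) - \tfrac12(m_{i-1}-m_i)^2$. Telescoping the squares (up to boundary terms) turns $\sum_i m_{i-1}m_i$ into $\sum_i m_i^2 - \tfrac12\sum_i (m_{i-1}-m_i)^2$, which gives \eqref{eq:main_preditable_part} and \eqref{eq:main_residue_part}.
\end{enumerate}
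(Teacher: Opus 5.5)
Your proof is correct and is essentially the paper's argument. The paper's two Doob steps are the first with respect to $\F$, conditioning out $\varepsilon_i$, and the second with respect to the shifted filtration $\F^{(-)}$, conditioning out $\varepsilon_{i-1}$. They produce exactly your substitutions $a_i = m_i + \Delta\Mk_i^{(n)}(f)$ and, for $i\ge 2$, $a_{i-1} = m_{i-1} + \Delta\Mk_{i-1}^{(n)}(f)$, followed by the same reindexing of the martingale sums.

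Your side remark is also accurate, and more careful than the paper's closing sentence. The process in \eqref{eq:algebraic_martingale_part} is not itself an $\F$-martingale in $j$, since its increment from $j$ to $j+1$ has conditional mean $m_{j+1}\Delta\Mk_j^{(n)}(f)$ given $\Fc_j$. The genuine martingale transform is \eqref{eq:main_martingale_part}, which differs from it by that bounded term.
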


\begin{proof}
The proof of this proposition relies on a double Doob decomposition.

\paragraph{The First Doob Decomposition.}
Let $n\in\mathbb N$ and let \( j \in \{1, \dots, n\} \). For each \( i \in \{1, \dots, j\} \), define the increment of the process \( \left(Z_i^{(n)}\right) \) by
\begin{align}
    \Delta Z_i^{(n)} := Z_i^{(n)}-Z_{i-1}^{(n)}= f\left( X_{\sigma_n(i-1)}, \varepsilon_{i-1} \right) f\left( X_{\sigma_n(i)}, \varepsilon_i \right) \ .
    \label{eq:Delta_Z_def}
\end{align}

Then, the Doob decomposition of the adapted process \( \left(Z_j^{(n)}\right) \) with respect to the filtration \( (\mathcal{F}_j) \) is given by
\begin{align}
    Z_j^{(n)} = A_j^{(n,1)} + M_j^{(n,1)} \ ,
    \label{eq:first_doob}
\end{align}
where
\begin{align}
    A_{j}^{(n,1)} 
    &= \sum_{i=1}^j \E\left[ \Delta Z_i^{(n)} \,\middle|\, \mathcal{F}_{i-1} \right] \nonumber \\
    &= f\left( X_{\sigma_n(0)}, 0 \right) \cdot \E_\varepsilon\left[ f\left( X_{\sigma_n(1)}, \varepsilon \right) \right] 
    + \sum_{i=2}^j f\left( X_{\sigma_n(i-1)}, \varepsilon_{i-1} \right) \cdot \E_\varepsilon\left[ f\left( X_{\sigma_n(i)}, \varepsilon \right) \right] \ ,
    \label{eq:A_j1}
\end{align}
and the martingale
\begin{align*}
    M_{j}^{(n,1)}
    &= \sum_{i=1}^j \left( \Delta Z_i^{(n)} - \E\left[ \Delta Z_i^{(n)} \,\middle|\, \mathcal{F}_{i-1} \right] \right) \ , \nonumber \\
    &= \sum_{i=1}^j \left(
    f\left( X_{\sigma_n(i-1)}, \varepsilon_{i-1} \right) f\left( X_{\sigma_n(i)}, \varepsilon_i \right)
    - \E\left[
    f\left( X_{\sigma_n(i-1)}, \varepsilon_{i-1} \right) f\left( X_{\sigma_n(i)}, \varepsilon_i \right)
    \,\middle|\, \mathcal{F}_{i-1}
    \right] \right)   \ , \nonumber \\
    &= \sum_{i=1}^j f\left( X_{\sigma_n(i-1)}, \varepsilon_{i-1} \right)
    \left(
    f\left( X_{\sigma_n(i)}, \varepsilon_i \right)
    - \E_\varepsilon\left[ f\left( X_{\sigma_n(i)}, \varepsilon \right) \right]
    \right)  \ , \nonumber \\
    &= f\left( X_{\sigma_n(0)}, 0 \right) \cdot \Delta \Mk_1^{(n)} + \sum_{i=2}^j f\left( X_{\sigma_n(i-1)}, \varepsilon_{i-1} \right) \cdot \Delta \Mk_i^{(n)} \ .
\end{align*}
with the martingale increments given by Eq.~\eqref{eq:martingale_increment}.

At this stage one can check that the process $(A_j^{(n)})$ is predictable as required by the Doob decomposition. Nevertheless the fluctuations of this term are not easy to understand. A second Doob decomposition will clarify the situation.

\paragraph{The Second Doob Decomposition.}
We now decompose again the process \( \left( A_{j}^{(n,1)} \right) \) defined in Eq.~\eqref{eq:A_j1}. This decomposition is taken with respect to the shifted filtration \( \mathbb{F}^{(-)} = (\mathcal{F}_i^{(-)})_{i \geq 1} \), where
\begin{align}
    \mathcal{F}_i^{(-)} := \mathcal{F}_{i-1} \ , \quad \text{for all } i \geq 1 \ ,
    \label{eq:F_1}
\end{align}
with \( \mathcal{F}_0^{(-)} = \mathcal{F}_0 \). We recall that \( \varepsilon_0 = 0 \) by convention.

Define for each \( i \in \{1, \dots, j\} \) the shifted increment
\begin{align}
    \Delta Z_i^{(-)} := f\left( X_{\sigma_n(i-1)}, \varepsilon_{i-1} \right) \cdot \E_\varepsilon\left[ f\left( X_{\sigma_n(i)}, \varepsilon \right) \right] \ .
    \label{eq:shifted_increment}
\end{align}

By Doob decomposition, we can write
\begin{align}
    A_{j}^{(n,1)} = A_{j}^{(n,2)} + M_{j}^{(n,2)} \ ,
    \label{eq:doob_second}
\end{align}
where this time
\begin{itemize}
    \item \( A_{j}^{(n,2)} \) is predictable with respect to \( \mathbb{F}^{(-)} \),
    \item \( M_{j}^{(n,2)} \) is a martingale adapted to \( \mathbb{F}^{(-)} \), and also adapted to \( \mathbb{F} \).
\end{itemize}

We namely obtain the following formula
\begin{align}
    A_{j}^{(n,2)}
    &= \sum_{i=1}^j \E \left[ \Delta Z_i^{(-)} \mid \mathcal{F}^{(-)}_{i-1} \right] \nonumber \\
    &= \E \left[ \Delta Z_1^{(-)} \mid \mathcal{F}_{0} \right]+\sum_{i=2}^j \E \left[ \Delta Z_i^{(-)} \mid \mathcal{F}_{i-2} \right] \nonumber\\
    &= f\left( X_{\sigma_n(0)}, 0 \right) \cdot \E_\varepsilon\left[ f\left( X_{\sigma_n(1)}, \varepsilon \right) \right]
    + \sum_{i=2}^j \E_\varepsilon\left[ f\left( X_{\sigma_n(i-1)}, \varepsilon \right) \right] \cdot \E_\varepsilon\left[ f\left( X_{\sigma_n(i)}, \varepsilon \right) \right] \ .
    \label{eq:A_j2}
\end{align}

For the martingale component, we compute
\begin{align}
    M_{j}^{(n,2)}
    &=
     \sum_{i=1}^j \left( \Delta Z_i^{(-)} - \E \left[ \Delta Z_i^{(-)} \mid \mathcal{F}^{(-)}_{i-1} \right] \right) \nonumber\\
     &= \left( \Delta Z_1^{(-)} - \E \left[ \Delta Z_1^{(-)} \mid \mathcal{F}_0 \right] \right)
    + \sum_{i=2}^j \left( \Delta Z_i^{(-)} - \E \left[ \Delta Z_i^{(-)} \mid \mathcal{F}_{i-2} \right] \right) \nonumber \\
     &= \sum_{i=2}^j \left( \Delta Z_i^{(-)} - \E \left[ \Delta Z_i^{(-)} \mid \mathcal{F}_{i-2} \right] \right) \nonumber \\
   &= \sum_{i=2}^j \left( 
f(X_{\sigma_n(i-1)}, \varepsilon_{i-1}) 
\cdot \E_\varepsilon[f(X_{\sigma_n(i)}, \varepsilon)]
- \E \left[ f(X_{\sigma_n(i-1)}, \varepsilon_{i-1}) \,\middle|\, \mathcal{F}_{i-2} \right]
\cdot \E_\varepsilon[f(X_{\sigma_n(i)}, \varepsilon)] \right) \nonumber \\
&= \sum_{i=2}^j \left(
f(X_{\sigma_n(i-1)}, \varepsilon_{i-1}) 
- \E_\varepsilon[f(X_{\sigma_n(i-1)}, \varepsilon)]
\right)
\cdot \E_\varepsilon[f(X_{\sigma_n(i)}, \varepsilon)] \nonumber \\
&= \sum_{i=2}^j \E_\varepsilon[f(X_{\sigma_n(i)}, \varepsilon)] 
\cdot \Delta \Mk_{i-1}^{(n)} \ .
    \label{eq:M_j2}
\end{align}
This concludes the second Doob decomposition.

\paragraph{Final Decomposition of \texorpdfstring{$Z_j^{(n)}$}{Zjn} in Terms of Predictable and Martingale Parts.}

Now we can put everything together.
Combining the two Doob decompositions, we obtain
\begin{align*}
    Z_j^{(n)} = A_j^{(n)} + M_j^{(n)} \ ,
\end{align*}
where
\begin{itemize}
    \item \( A_j^{(n)} := A_{j}^{(n,2)} \) 
    \item \( M_j^{(n)} := M_{j}^{(n,1)} + M_{j}^{(n,2)} \) 
\end{itemize}

Using the explicit forms of \( M_j^{(n,1)} \) and \( M_j^{(n,2)} \), we rewrite the total martingale as
\begin{align}
  \ & M_j^{(n)} \\
= \ & M_j^{(n,1)} + M_j^{(n,2)} \nonumber \\[1.5ex]
= \ & f(X_{\sigma_n(0)}, 0) \cdot \Delta \Mk_1^{(n)} 
+ \sum_{i=2}^j f(X_{\sigma_n(i-1)}, \varepsilon_{i-1}) \cdot \Delta \Mk_i^{(n)}   + \sum_{i=1}^{j-1} \E_\varepsilon[f(X_{\sigma_n(i+1)}, \varepsilon)] \cdot \Delta \Mk_i^{(n)} \nonumber \\[2ex]
= \ & \left(f(X_{\sigma_n(0)}, 0) + \E_\varepsilon[f(X_{\sigma_n(2)}, \varepsilon)]\right) \cdot \Delta \Mk_{1}^{(n)}+ f(X_{\sigma_n(j-1)}, \varepsilon_{j-1}) \cdot \Delta \Mk_j^{(n)}\nonumber \\[1.5ex]
  & \quad + \sum_{k=2}^{j-1} \left( f(X_{\sigma_n(k-1)}, \varepsilon_{k-1}) + \E_\varepsilon[f(X_{\sigma_n(k+1)}, \varepsilon)] \right) 
\cdot \Delta \Mk_k^{(n)} \ .
\label{eq:m_final_bords}
\end{align}

It is then clear that $(A_j^{(n)})$ is $\mathcal F_0$-measurable and $(M_j^{(n)})$ is an $\F$-martingale. This completes the proof of Proposition~\ref{proposition:double_doob}.
\end{proof}

\section{Martingale approach to consistency}
\label{section:consistency}

This section is devoted to the consistency of the estimator which is the first part of Theorem \ref{thm:limit_sobol}. While that result is known, as is the corresponding result for the Cramér--von Mises case, this section serves to illustrate the approach.

\begin{proposition}
\label{proposition:consistency}
We have the following almost sure convergence
$$
\lim_{n\to\infty}
\widehat{\theta}_n(f)
=
\theta^*(f) \ .$$
As a consequence we have the consistency result in Eq.~\eqref{eq:main_thm_LLN}, which we recall.
\begin{align*}
\lim_{n \to \infty} \widehat{\rho}_n^{\mathrm{Sobol'}}
= \rho^{\mathrm{Sobol'}}
= \frac{ \E\left[ \E[Y|X]^2 \right] - \E[Y]^2 }{ \E[Y^2] - \E[Y]^2 }\ , \quad \text{almost surely} \ .
\end{align*}
\end{proposition}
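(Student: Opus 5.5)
The plan is to read off $\widehat{\theta}_n(f)$ from the decomposition of Proposition \ref{proposition:double_doob}, as in Eq.~\eqref{eq:decomp_sketch}, and to handle three pieces separately: the $\Fc_0$-measurable piece $\frac1n\Ac_n^{(n)}(f)$, the martingale piece $\frac1n\Mc_n^{(n)}(f)$, and the remainder $\frac1n R_n^{(n)}(f)$.

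\textbf{Predictable part.} Since $\sigma_n$ is a permutation, $\frac1n\Ac_n^{(n)}(f)$ is the empirical mean of the i.i.d.\ bounded vectors $\bigl(\varphi(X_i)^2,\varphi(X_i),\E_\varepsilon[f^2(X_i,\varepsilon)]\bigr)$. By the strong law it converges a.s.\ to $\theta^*(f)$.

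\textbf{Martingale part.} Each coordinate of $\Mc_n^{(n)}(f)$ is, for fixed $n$, a sum of $n$ $\F$-martingale increments. The increments are bounded by $C\|f\|_\infty^2$, using Eq.~\eqref{eq:main_martingale_part} for the first coordinate. Because the triangular array has no relation between $\sigma_n$ and $\sigma_{n+1}$, I will not use a martingale SLLN. Instead I will use a uniform fourth-moment bound, via Burkholder's inequality (or a direct expansion of the fourth power with orthogonality of increments), conditionally on $\Fc_0$:
\[
\E\Bigl[\bigl|\tfrac1n \Mc_n^{(n)}(f)\bigr|^4 \,\Big|\, \Fc_0\Bigr]\le \frac{C\|f\|_\infty^8}{n^2}.
\]
By conditional Markov, $\sum_n \P(|\frac1n\Mc_n^{(n)}|>\eta\mid\Fc_0)<\infty$.

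\textbf{Remainder.} The boundary $\Oc(\|f\|_\infty^2)/n$ vanishes trivially. What remains is the $\Fc_0$-measurable quantity
\[
D_n:=\frac1{2n}\sum_{i=1}^n\bigl(\varphi(X_{\sigma_n(i-1)})-\varphi(X_{\sigma_n(i)})\bigr)^2 .
\]
I expect this to be the main obstacle, since $\varphi$ is only bounded measurable. My approach:
\begin{itemize}
\item Replace $X_i$ by $U_i=F_X(X_i)$, which are uniform because $F_X$ is continuous and preserve the ordering. Write $\varphi=\psi\circ F_X$ a.s.
\item For $\delta>0$, pick a continuous $g$ on $[0,1]$ with $\E[(\psi-g)^2(U)]<\delta$.
\item Use $(a-b)^2\le 3(a')^2+3(a'-b')^2+3(b'')^2$ to bound
\[
D_n\le \frac{3}{n}\sum_i(\psi-g)^2(U_i)\;+\;\frac{3}{2}\,\omega_g\Bigl(\max_i \text{spacing}\Bigr)\;+\;\frac{3}{n}\cdot 4\|g\|_\infty^2 .
\]
The last term accounts for the single wrap-around pair.
\item The first term tends a.s.\ to $3\E[(\psi-g)^2(U)]<3\delta$ by the SLLN. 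The maximal spacing of $n$ uniforms tends to $0$ a.s., so the second term vanishes.
\end{itemize}
Hence $\limsup D_n\le 3\delta$, and so $D_n\to0$ a.s.

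\textbf{Transfer.} Since $\widehat{\theta}_n(f)$ is only defined through the equality in law \eqref{eq:vector_eqlaw}, the a.s.\ statement is meant for the actual vector
\[
V_n=\tfrac1n\Bigl(T_n(f),\textstyle\sum_i Y_i,\sum_i Y_i^2\Bigr).
\]
That equality in law in fact holds conditionally on $\Fc_0$: exchangeability of the $\varepsilon_i$ is used with the $X$'s frozen. So for every $\eta>0$,
\[
\P\bigl(|V_n-\theta^*|>3\eta\mid\Fc_0\bigr)\le \1_{\{|\frac1n\Ac_n^{(n)}-\theta^*|>\eta\}}+\1_{\{|\tfrac1nR_n^{(n)}|>\eta\}}+\frac{C}{\eta^4n^2}.
\]
On the full-measure set where the first two indicators vanish eventually, the right-hand side is summable in $n$. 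The conditional Borel--Cantelli lemma then gives $V_n\to\theta^*(f)$ a.s., with the same argument applying to the representation $\widehat{\theta}_n(f)$ itself.

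\textbf{Conclusion.} Finally, $\widehat{\rho}^{\mathrm{Sobol'}}_n=G(V_n)$ with $G(a,b,c)=(a-b^2)/(c-b^2)$. The map $G$ is continuous at $\theta^*(f)$ because $\Var(Y)>0$. The continuous mapping theorem then yields Eq.~\eqref{eq:main_thm_LLN}.
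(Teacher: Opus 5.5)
Your argument is correct, and it departs from the paper's proof at two points. First, the paper controls the remainder only in $L^1$: by exchangeability, $\frac1n\E\sum_{i}(\varphi(X_{\sigma_n(i-1)})-\varphi(X_{\sigma_n(i)}))^2=\Delta_n$, which tends to $0$ by Chatterjee's Corollary A9. It handles the martingale part with Azuma--Hoeffding and Borel--Cantelli, and then pushes the resulting convergence \emph{in probability} through the fixed-$n$ equality in law \eqref{eq:vector_eqlaw}. The almost sure statement is only sketched afterwards, via McDiarmid concentration (Remark \ref{remark:cv_as_or_proba}).

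You instead prove $D_n\to0$ almost surely, self-containedly, from density of continuous functions in $L^2[0,1]$, the strong law, and the almost sure vanishing of the maximal spacing of uniforms. Second, and more importantly, you notice that \eqref{eq:vector_eqlaw} holds conditionally on $\Fc_0$. The $\Fc_0$-measurable pieces are then literally the same random variables on both sides, and only the $\varepsilon$-dependent martingale piece needs summable conditional tails. Since the first Borel--Cantelli lemma only involves the (conditional) law at each fixed $n$, this gives genuine almost sure convergence of the actual estimator. That is exactly the proposition as stated, obtained without McDiarmid or Chatterjee's estimates.

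What the paper's route buys is reuse. The identity $\E[R_n^{(n)}(f)]=\Oc(\|f\|_\infty^2)-\frac n2\Delta_n$ and the nearest-neighbour estimates are what is needed later to identify the bias $\Delta_n$ and to prove Lemma \ref{lemma:remainder_control} at the $\sqrt n$ scale. There, a qualitative almost sure statement like yours would not suffice.

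Three small fixes:
\begin{enumerate}
\item The modulus term should read $\frac32\,\omega_g(\text{max spacing})^2$.
\item The boundary part of $R_n^{(n)}(f)$ involves $\varepsilon_n$, so the second indicator in your conditional bound should be on the $\Fc_0$-measurable event $\{D_n>\eta-C/n\}$ rather than on $\{|\frac1n R_n^{(n)}(f)|>\eta\}$.
\item Mere orthogonality of increments does not by itself give the $n^{-2}$ fourth-moment rate. Rely on Burkholder, or on Azuma--Hoeffding applied conditionally on $\Fc_0$.
\end{enumerate}
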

One may argue whether convergence in probability is more natural. We refer to Remark \ref{remark:cv_as_or_proba} for a discussion, after the proof.

\begin{proof} Recall the decomposition from Eq. \eqref{eq:decomp_sketch}
\begin{align*}
\widehat{\theta}_n(f)
&=
\frac{1}{n}
\begin{pmatrix}
R_n^{(n)}(f) \\
0 \\
0
\end{pmatrix}
+
\frac{1}{n}
\underbrace{
\begin{pmatrix}
\sum_{i=1}^n \E_\varepsilon\big(f(X_i, \varepsilon)\big)^2 \\
\sum_{i=1}^n \E_\varepsilon\big(f(X_i, \varepsilon)\big) \\
\sum_{i=1}^n \E_\varepsilon\big(f^2(X_i, \varepsilon)\big)
\end{pmatrix}}_{ (1)=\Ac_n^{(n)}(f) }
+
\frac{1}{n}
\underbrace{
\begin{pmatrix}
M_n^{(n)}(f) \\
\sum_{i=1}^n \left( f(X_{\sigma_n(i)}, \varepsilon_i) - \E_\varepsilon\big(f(X_{\sigma_n(i)}, \varepsilon)\big) \right) \\
\sum_{i=1}^n \left( f^2(X_{\sigma_n(i)}, \varepsilon_i) - \E_\varepsilon\big(f^2(X_{\sigma_n(i)}, \varepsilon)\big) \right)
\end{pmatrix}}_{ (2)=\Mc_n^{(n)}(f) } \ .
\end{align*}

From this decomposition, we observe that term \textbf{(1)} will follow from the classical strong law of large numbers for i.i.d. random variables, while term \textbf{(2)} will be handled using the Azuma--Hoeffding inequality. Then we control the remainder term.

\medskip

{\bf Law of large numbers.}
By the strong law of large numbers and independence of the $X_i$'s, we obtain
\begin{align}
    \lim_{n \to \infty} \frac{\Ac_n^{(n)}}{n}(f) 
    = \begin{pmatrix}
    \E\left[  \E_\varepsilon\left[ f\left( X, \varepsilon \right) \right]^2 \right] \\
    \E(Y) \\
    \E(Y^2)
    \end{pmatrix} 
    =
    \begin{pmatrix}
    \E\left[  \E[Y|X]^2 \right] \\
    \E(Y) \\
    \E(Y^2)
    \end{pmatrix} 
    \quad \text{a.s.}
    \label{eq:limit_A_n}
\end{align}

\medskip

{\bf Azuma-Hoeffding.} Let us show that the martingale term \( \Mc_n^{(n)} \) divided by \( n \) vanishes asymptotically. We detail only the argument for the first coordinate.
To this end, note that all components are martingales with bounded increments. We apply the Azuma–Hoeffding inequality since 
$$
| \Delta M_i^{(n)}(f) | \leq 2 \| f \|_\infty^2 \ ,
$$
we have for any $\eta > 0$,
$$
\P \left( \left| M_n^{(n)} \right| \geq \eta n \right) 
\leq 2 \exp \left( - \frac{\eta^2 n}{8 \| f \|_\infty^4} \right) \ .
$$

By the Borel–Cantelli lemma, it follows that $\frac{M_n^{(n)}}{n} \xrightarrow{\text{a.s.}} 0$. The same argument for each coordinate yields
\begin{align}
    \frac{\Mc_n^{(n)}}{n} \xrightarrow{\text{a.s.}} 0 \ .
    \label{eq:limit_M_n}
\end{align}

\medskip

{\bf Controlling the remainder.}
We now examine the expression of the remainder from Proposition~\ref{proposition:double_doob}. Recall that
\begin{align*}
    R_n^{(n)}(f) = \Oc( \|f\|_\infty^2 )
- \half
\sum_{i=1}^n \left( \E_\varepsilon[f(X_{\sigma_n(i-1)}, \varepsilon)] -
\E_\varepsilon[f(X_{\sigma_n(i)}, \varepsilon)] 
\right)^2 \ .
\end{align*}
Notice that, upon playing with indices then invoking exchangeability
\begin{align*}
  & \frac{1}{n} \sum_{i=1}^n \E \left( \E_\varepsilon[f(X_{\sigma_n(i-1)}, \varepsilon)] -
\E_\varepsilon[f(X_{\sigma_n(i)}, \varepsilon)] 
\right)^2 \\
= & \frac{1}{n} \sum_{i=1}^n \E \left( \E_\varepsilon[f(X_{i}, \varepsilon)] -
\E_\varepsilon[f(X_{N_n(i)}, \varepsilon)] 
\right)^2 \\
= & \E \left[\left( \E_\varepsilon[f(X_{1}, \varepsilon)] -
\E_\varepsilon[f(X_{N_n(1)}, \varepsilon)] 
\right)^2\right] \ .
\end{align*}
Applying the estimates of Chatterjee \cite[Corollary A9]{chatterjee2021new}, the integrand under the expectation converges to $0$ in probability. Since $f$ is bounded, the expectation itself goes to $0$.
Hence we have the limit in $L^1(\Omega, \P)$
\begin{align*}
    \lim_{n \rightarrow \infty} \frac{ R_n^{(n)} }{n} = 0 \ .
\end{align*}

At this stage, let us warn the reader that controlling the remainder at the scale of fluctuations is slightly different, hence the precautions taken in the statement of Theorem \ref{thm:limit_sobol}.

\medskip

{\bf Conclusion.} 
Consider the function \begin{align}
    h(t, s_1, s_2) := \frac{t - s_1^2}{s_2 - s_1^2} \ .
    \label{eq:Xi_as_function}
\end{align}
Combining the convergence to zero in probability of the remainder, and the almost sure convergences of ~\eqref{eq:limit_A_n} and~\eqref{eq:limit_M_n}, we have the convergence in probability
\begin{align}
\P-\lim_{n\to\infty} h\left( \frac{Z_n(f)}{n}, \frac{S_n(f)}{n}, \frac{S_n(f^2)}{n} \right)
=
\frac{ \E\left[ \E[Y|X]^2 \right] - \E[Y]^2 }
     { \E[Y^2] - \E[Y]^2 } \ .
\end{align}
Recalling the equality in law of Eq. \eqref{eq:vector_eqlaw}, we obtain the desired convergence in probability.
\end{proof}

\begin{remark}[Convergence]
\label{remark:cv_as_or_proba}
Notice that equality in law for every fixed $n$ translates convergence in probability to convergence in probability, and almost sure convergence to the weaker convergence in probability only. As such, there is no need to attempt upgrading the previous convergence to almost sure convergence. In order to truly recover almost sure convergence, one can proceed as follows. First, invoke concentration around the mean, which is proved classically thanks to the McDiarmid inequality \cite{gamboa2022global}. Then, convergence in probability can be upgraded to convergence in $L^1(\Omega, \P)$  from the convergence in probability and concentration. Finally, a Borel-Cantelli argument and convergence of the mean yield almost sure convergence. 
\end{remark}

\section{Proof of univariate fluctuations (Main Theorem \ref{thm:limit_sobol})}
\label{section:proof_univariate}

Let us recall our decomposition from Eq. \eqref{eq:decomp_sketch}. Before diving into the proof, we start with a series of lemmas that will allow us to study the fluctuations of term $(1)$ and term $(2)$. 

\subsection{Preliminary lemmas}

Let us start with the easiest term $(1)$.
\begin{lemma} 
\label{lemma:preliminary1}
Recall the term $(1)$ defined for any $f \in \Phi$ by
$$
\Ac_n^{(n)} = \Ac_n^{(n)}(f) = \begin{pmatrix}
 \sum_{i=1}^n \E_\varepsilon[f(X_i, \varepsilon)]^2 \\
 \sum_{i=1}^n \E_\varepsilon[f(X_i, \varepsilon)] \\
 \sum_{i=1}^n \E_\varepsilon[f(X_i, \varepsilon)^2]
\end{pmatrix} \ .
$$
Then
$$
\frac{ \Ac_n^{(n)}(f) - \E[\Ac_n^{(n)}(f)] }{\sqrt n}
\stackrel{\Lc}{\longrightarrow} \mathcal{N}(0, \Sigma_0(f,f)) \ ,
$$
where the asymptotic covariance matrix \( \Sigma_0 \in \mathbb{R}^{3 \times 3} \) is given indeed by Eq. \eqref{eq:cov_sigma0_fg}.
\end{lemma}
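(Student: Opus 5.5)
The plan is to observe that $\Ac_n^{(n)}(f)$ is a sum of i.i.d. random vectors, and then apply the classical multivariate central limit theorem. Set, for each $i$,
\begin{align*}
\xi_i := \begin{pmatrix}
 \E_\varepsilon[f(X_i, \varepsilon)]^2 \\
 \E_\varepsilon[f(X_i, \varepsilon)] \\
 \E_\varepsilon[f(X_i, \varepsilon)^2]
\end{pmatrix} \ ,
\end{align*}
so that $\Ac_n^{(n)}(f) = \sum_{i=1}^n \xi_i$.

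The first point to check is the passage from the $\sigma_n$-indexed sums to the plain sums. This was already done in the decomposition \eqref{eq:decomp_sketch}: the summands depend only on $X_{\sigma_n(i)}$, and $\sigma_n$ is a bijection of $\{1,\dots,n\}$, so the sum is unchanged by reindexing. Hence $\Ac_n^{(n)}(f)$ is exactly $\sum_{i=1}^n \xi_i$, with no equality in law needed.

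Next, each $\xi_i$ is a fixed measurable function of $X_i$ alone, namely $x \mapsto \big(\varphi(x)^2, \varphi(x), \E_\varepsilon[f(x,\varepsilon)^2]\big)$. Since the $X_i$ are i.i.d., so are the $\xi_i$. Because $f$ is bounded, $\|\xi_i\| \le C(\|f\|_\infty)$, and in particular $\xi_1$ has finite second moments. Its covariance matrix is $\Cov(\xi_1,\xi_1)$, which is precisely $\Sigma_0(f,f)$ as defined in Eq.~\eqref{eq:cov_sigma0_fg}: the vectors entering that definition are $\xi_1$ itself with $X = X_1$.

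Finally, $\E[\Ac_n^{(n)}(f)] = n\E[\xi_1]$, so the multivariate Lindeberg--Lévy CLT (equivalently, Cramér--Wold combined with the scalar CLT applied to $u^\top \xi_i$ for each $u\in\R^3$) gives
\begin{align*}
\frac{1}{\sqrt n}\sum_{i=1}^n (\xi_i - \E\xi_1) \stackrel{\Lc}{\longrightarrow} \Nc(0,\Sigma_0(f,f)) \ .
\end{align*}
There is no real obstacle here. The only step deserving care is the reindexing above, i.e.\ noting that the term is genuinely $\Fc_0$-measurable and permutation-invariant, so that the dependence structure of the order statistics plays no role.
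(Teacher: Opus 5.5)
Your proposal is correct and matches the paper's proof, which simply invokes the multivariate central limit theorem for the i.i.d.\ vectors $\xi_i$. Your remarks on the reindexing over $\sigma_n$ and on finite second moments from the boundedness of $f$ make explicit what the paper leaves implicit.
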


\begin{proof}
It is just an application of the usual multivariate Central Limit Theorem for i.i.d. random variables.
\end{proof}

Now we shall prepare the ingredient for proving the fluctuations of term $(2)$. The first result concerns the almost sure convergence of the predictable quadratic variation. Recall that the predictable bracket between two discrete-time (vector-valued) martingales  $X = \left( X_n \ ; \ n \geq 0 \right)$ and $Y = \left( Y_n \ ; \ n \geq 0\right)$ is defined by
$$
\langle X, Y \rangle_n = \sum_{i=1}^n \E\left[ \Delta X_i \, (\Delta Y_i)^T \, \big| \, \mathcal{F}_{i-1} \right].
$$

\begin{lemma}
\label{lemma:preliminary2}
Recall
$$ \Mc_n^{(n)} 
 = \Mc_n^{(n)}(f)
 = 
\begin{pmatrix}
M_n^{(n)}(f) \\
N_n(f) \\
N_n(f^2)
\end{pmatrix}
:= \begin{pmatrix}
M_n^{(n)}(f) \\
\sum_{i=1}^n \left( f(X_{\sigma_n(i)}, \varepsilon_i) - \E_\varepsilon\big(f(X_{\sigma_n(i)}, \varepsilon)\big) \right) \\
\sum_{i=1}^n \left( f^2(X_{\sigma_n(i)}, \varepsilon_i) - \E_\varepsilon\big(f^2(X_{\sigma_n(i)}, \varepsilon)\big) \right)
\end{pmatrix}
\ .$$
We have the convergence in probability, for any pair of functions $(f,g) \in \Phi \times \Phi$
\begin{align}
& \P-\lim_{n\to\infty}\frac{1}{n}\langle \Mc^{(n)}(f), \Mc^{(n)}(g) \rangle_n\\
=&
\ \P-\lim_{n\to\infty}\frac{1}{n}
\begin{pmatrix}
\langle M^{(n)}(f),M^{(n)}(g) \rangle_n & \langle M^{(n)}(f), N(g) \rangle_n & \langle M^{(n)}(f), N(g^2) \rangle_n \\
\langle N(f), M^{(n)}(g)\rangle_n & \langle N(f),N(g) \rangle_n & \langle N(f), N(g^2) \rangle_n \\
\langle N(f^2), M^{(n)}(g) \rangle_n & \langle N(f^2), N(g) \rangle_n & \langle N(f^2),N(g^2) \rangle_n
\end{pmatrix}
= \Sigma_1(f,g), \quad a.s.
\end{align}
where $\Sigma_1$ is given indeed by Eq. \eqref{eq:cov_sigma1_fg}.
\end{lemma}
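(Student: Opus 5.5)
Since all three martingales are driven by the same innovations, the plan is to compute each entry of the bracket explicitly as a sum of $\Fc_{i-1}$-measurable terms, reduce them to functions of the ordered $X$'s and their neighbours, and then pass to the limit entry by entry.

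\textbf{Step 1: write the increments.} For $\Mc^{(n)}(f)$ the increments are
\[
\Delta M_i^{(n)}(f)=c_i(f)\,\Delta\Mk_i^{(n)}(f), \qquad \Delta N_i(f)=\Delta\Mk_i^{(n)}(f), \qquad \Delta N_i(f^2)=\Delta\Mk_i^{(n)}(f^2),
\]
where $c_i(f):=f(X_{\sigma_n(i-1)},\varepsilon_{i-1})+\varphi_f(X_{\sigma_n(i+1)})$ and $\varphi_f(x):=\E_\varepsilon[f(x,\varepsilon)]$. The coefficient $c_i(f)$ is $\Fc_{i-1}$-measurable. The boundary index $j=n$ in Proposition~\ref{proposition:algebraic_double_doob} changes $O(1)$ bounded terms, hence contributes $O(\|f\|_\infty^2\|g\|_\infty^2/n)$ after dividing by $n$, and we discard it. Conditionally on $\Fc_{i-1}$, the variable $\varepsilon_i$ is independent of everything else, so each bracket entry has the form $\sum_i c_i\,\Cov_\varepsilon(\cdot,\cdot)(X_{\sigma_n(i)})$.

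\textbf{Step 2: compute the nine entries.} For instance,
\[
\langle M(f),M(g)\rangle_n=\sum_i c_i(f)c_i(g)\,\Cov_\varepsilon(f,g)(X_{\sigma_n(i)}),
\qquad
\langle M(f),N(g^2)\rangle_n=\sum_i c_i(f)\,\Cov_\varepsilon(f,g^2)(X_{\sigma_n(i)}),
\]
and $\langle N(f),N(g)\rangle_n=\sum_i \Cov_\varepsilon(f,g)(X_{\sigma_n(i)})$. The entries not involving $M$ are plain sums $\sum_i h(X_i)$ after undoing the permutation, so the strong law gives their limits $\E[\Cov_\varepsilon(\cdot,\cdot)(X)]$. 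These match the entries of $\Sigma_a\odot\Sigma_b$, whose $\Sigma_a$-factor equals $1$ in those positions.

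\textbf{Step 3: entries involving $M$ (the main obstacle).} Here the coefficients depend on the neighbours $X_{\sigma_n(i\pm1)}$ and on $\varepsilon_{i-1}$. I would proceed in two sub-steps.

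First, replace $f(X_{\sigma_n(i-1)},\varepsilon_{i-1})$ by its conditional mean. Expanding $c_i(f)c_i(g)$, the cross terms $\Delta\Mk_{i-1}(f)\,h(X\text{'s})$ form a martingale-difference sum with bounded increments, so by Azuma--Hoeffding (as in Proposition~\ref{proposition:consistency}) they are $o(n)$. The quadratic term $f(\cdot,\varepsilon_{i-1})g(\cdot,\varepsilon_{i-1})$ is handled the same way: replacing it by $\E_\varepsilon[fg](X_{\sigma_n(i-1)})$ leaves a martingale-difference sum. This produces the additional $\Cov_\varepsilon(f,g)$ term in $\Sigma_a$.

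Second, replace the neighbour values $\varphi_f(X_{\sigma_n(i\pm1)})$ and $\E_\varepsilon[fg](X_{\sigma_n(i-1)})$ by their values at $X_{\sigma_n(i)}$. The error is bounded by a constant times
\[
\frac1n\sum_i\bigl(|\psi(X_{\sigma_n(i\pm1)})-\psi(X_{\sigma_n(i)})|\bigr)
\]
for bounded measurable $\psi$. By exchangeability, its expectation is $\E|\psi(X_1)-\psi(X_{N_n(1)})|$ or the analogous term with the left neighbour. This tends to $0$ by Chatterjee's Corollary A9, used exactly as in the remainder control of Proposition~\ref{proposition:consistency}; $L^1$ convergence then gives convergence in probability.

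After these two replacements, $c_i(f)c_i(g)$ becomes $4\varphi_f\varphi_g+\Cov_\varepsilon(f,g)$ at $X_{\sigma_n(i)}$, and $c_i(f)$ becomes $2\varphi_f(X_{\sigma_n(i)})$. This reproduces the first row and column of $\Sigma_a$. Undoing the permutation and applying the strong law yields $\E[\Sigma_a\odot\Sigma_b]=\Sigma_1(f,g)$.

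The delicate point is that $\varphi_f$ is only measurable, with no continuity assumed. The neighbour-replacement therefore has to rely on Chatterjee's lemma rather than on Taylor expansions, and since the limit is obtained through $L^1$ control, the convergence is only in probability.
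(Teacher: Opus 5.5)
Your proposal is correct and is essentially the paper's proof. The paper likewise writes the bracket as $\sum_i (a_i(f)a_i(g)^\top)\odot\Sigma_b(X_{\sigma_n(i)},f,g)$. It then removes the dependence on $\varepsilon_{i-1}$ by subtracting the $\Fc_0$-conditional expectation, which is a bounded-increment martingale and hence $o(n)$; a mean--covariance decomposition of that conditional expectation produces exactly your extra $\Cov_\varepsilon(f,g)$ term. Finally it replaces neighbour values by centre values through the same elementary product bound and Chatterjee's Corollary A9 in $L^1$, and concludes with the strong law of large numbers, so the convergence obtained is in probability, as you note.
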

\begin{proof}
See Subsection \ref{subsection:proof_lemma_preliminary2}.
\end{proof}

In the final steps of the proof, we shall combine two convergences in law. One concerns $(\Ac_n{(n)}(f) \ ; \ n \in \N^*)$ from Lemma \ref{lemma:preliminary1} and the other concerns the martingale part $(\Mc_n^{(n)}(f)  \ ; \ n \in \N^*)$. To this end we shall use the following Lemma.
\begin{lemma} 
\label{lemma:preliminary3}
Let us consider a filtration $\F = \left( \mathcal{F}_n \ ; \ n \in \N \right)$ and two sequences of vector-valued random variables $\alpha = ( \alpha_n \ ; \ n \in \N ) $ and $\beta = (\beta_n\ ; \ n \in \N) $ such that:
\begin{itemize}
    \item All the elements in the sequence $\alpha$ are $ \mathcal{F}_0 $-measurable and the sequence converges in distribution to a Gaussian random variable $\mathcal{N}(0, \Sigma_\alpha )$.
    \item $\beta_n$ is $\mathcal{F}_n $-measurable and converges in distribution, conditionally on $ \mathcal{F}_0 $, to a Gaussian random variable $ \mathcal{N}(0, \Sigma_\beta) $, where the covariance matrix $\Sigma_\beta$ is deterministic.
\end{itemize}
Then, the sum $\left( \alpha_n + \beta_n \ ; \ n \in \N \right)$ converges in distribution to a Gaussian random variable $ \mathcal{N}(0, \Sigma_\alpha + \Sigma_\beta ) $.
\end{lemma}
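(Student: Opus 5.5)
The plan is to prove the statement through characteristic functions, conditioning on $\mathcal{F}_0$. Fix a vector $u$ and write
$$
\E\left[ e^{i \langle u, \alpha_n + \beta_n \rangle} \right]
= \E\left[ e^{i \langle u, \alpha_n \rangle} \, \E\left[ e^{i \langle u, \beta_n \rangle} \,\middle|\, \mathcal{F}_0 \right] \right] \ ,
$$
which holds because $\alpha_n$ is $\mathcal{F}_0$-measurable. The conditional convergence hypothesis on $\beta$ is read in its natural sense: the conditional characteristic function converges in probability,
$$
\phi_n(u) := \E\left[ e^{i \langle u, \beta_n \rangle} \mid \mathcal{F}_0 \right] \longrightarrow e^{-\frac12 u^\top \Sigma_\beta u} \ .
$$
This is exactly what the martingale CLT delivers when it is applied conditionally on $\mathcal{F}_0$. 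Indeed, under $\P(\cdot \mid \mathcal{F}_0)$, which amounts to freezing the $X_k$'s, the increments $\Delta\Mk_i^{(n)}$ stay martingale increments with respect to $\sigma(\varepsilon_1,\dots,\varepsilon_i)$, and Lemma \ref{lemma:preliminary2} supplies a deterministic limit for the bracket.

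Next I split
$$
\E\left[ e^{i \langle u, \alpha_n \rangle} \phi_n(u) \right]
= e^{-\frac12 u^\top \Sigma_\beta u} \, \E\left[ e^{i \langle u, \alpha_n \rangle} \right]
+ \E\left[ e^{i \langle u, \alpha_n \rangle} \left( \phi_n(u) - e^{-\frac12 u^\top \Sigma_\beta u} \right) \right] \ .
$$
The second term is at most $\E\left| \phi_n(u) - e^{-\frac12 u^\top \Sigma_\beta u} \right|$ in modulus. The quantity inside is bounded by $2$ and tends to $0$ in probability, so bounded convergence sends this term to $0$. For the first term, the hypothesis on $\alpha$ gives $\E[e^{i\langle u,\alpha_n\rangle}] \to e^{-\frac12 u^\top \Sigma_\alpha u}$. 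Combining the two, $\E[e^{i\langle u,\alpha_n+\beta_n\rangle}] \to e^{-\frac12 u^\top(\Sigma_\alpha+\Sigma_\beta)u}$ for every $u$, and Lévy's continuity theorem concludes.

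The key point is that $\Sigma_\beta$ is deterministic. Had the limit covariance depended on $\mathcal{F}_0$, it would have remained correlated with $\alpha_n$ and the factorization would fail. The only step I expect to need care is making precise what "conditional convergence in distribution" means, in probability versus almost surely, and checking that it holds in the application. If only a subsequence version is available, I will extract a further subsequence along which $\phi_n(u)\to e^{-\frac12 u^\top\Sigma_\beta u}$ almost surely, for all rational $u$ simultaneously, and then use that a limit independent of the subsequence determines the full limit. The triangular array structure, where $\mathcal{F}_n$ is allowed to depend on $n$, causes no difficulty, because the argument uses only $\mathcal{F}_0$.
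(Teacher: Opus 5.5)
Your proposal is correct and follows the paper's proof: you condition on $\mathcal{F}_0$ to pull out $e^{i\langle u,\alpha_n\rangle}$, replace $\E[e^{i\langle u,\beta_n\rangle}\mid\mathcal{F}_0]$ by the deterministic Gaussian factor up to an error removed by bounded convergence, and conclude with the CLT for $\alpha_n$ and L\'evy's theorem. The paper phrases this through Cram\'er--Wold with a scalar $t$, which is equivalent, and your explicit reading of the conditional hypothesis as convergence in probability of the conditional characteristic function is exactly what the paper's ``$o(1)$ plus dominated convergence'' step tacitly requires.
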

\begin{proof}
Thanks to the classical Cram\'er--Wold device, convergence of vector-valued random variables is deduced from the scalar setting using linear combinations. We use characteristic functions in the scalar setting. Write for $t \in \R$,
\begin{align*}
   \E\left( e^{it(\alpha_n+\beta_n)} \right) 
&=  \E\left( \E\left(  e^{it(\alpha_n+\beta_n)} \ | \ \Fc_0 \right) \right) \\
& =  \E\left( e^{it \alpha_n} \E\left(  e^{it\beta_n} \ | \ \Fc_0 \right) \right) \ .
\end{align*}
Now we invoke the convergence in law, conditionally on $\Fc_0$ so that
\begin{align*}
   \E\left( e^{it(\alpha_n+\beta_n)} \right) 
&=  \E\left( e^{it \alpha_n} \left( \exp\left(-\frac{t^2 \Sigma_\beta^2}{2}\right) + o(1) \right) \right) \ .
\end{align*}
By dominated convergence the $o(1)$ remains an $o(1)$ upon integration. In the end
\begin{align*}
   \E\left( e^{it(\alpha_n+\beta_n)} \right) 
&=  o(1) + 
    \exp\left( -\frac{t \Sigma_\beta^2}{2} \right)
    \E\left( e^{it \alpha_n} \right) \\
&=  o(1) + 
    \exp\left( -\frac{t^2 (\Sigma_\beta^2+\Sigma_\alpha^2)}{2} \right) \ .
\end{align*}
\end{proof}

Furthermore, without any conditions on the bounded measurable function $f$, we have the following control on the remainder $R_n^{(n)}(f)$.
\begin{lemma}
\label{lemma:remainder_control}
We have the limit in $L^2(\Omega, \P)$ and in probability
$$
  \lim_{n \rightarrow \infty }n^{-\half} 
  \left( R_n^{(n)}(f)- \E R_n^{(n)}(f) \right)
  = 0 \ .
$$
Furthermore, the quantity at hand remains bounded in $L^2(\Omega, \P)$.
\end{lemma}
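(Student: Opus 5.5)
The remainder is, by Proposition \ref{proposition:double_doob},
$$
R_n^{(n)}(f) = \Oc(\|f\|_\infty^2) - \half \sum_{i=1}^n \left( \varphi(X_{\sigma_n(i-1)}) - \varphi(X_{\sigma_n(i)}) \right)^2 ,
$$
with $\varphi = \E_\varepsilon[f(\cdot,\varepsilon)]$ bounded by $\|f\|_\infty$. The $\Oc(\|f\|_\infty^2)$ part (boundary terms) is bounded deterministically, so after division by $\sqrt n$ it tends to $0$ in $L^2$. It therefore suffices to show that
$$
Q_n := \sum_{i=1}^n \left( \varphi(X_{\sigma_n(i-1)}) - \varphi(X_{\sigma_n(i)}) \right)^2
$$
satisfies $\Var(Q_n) = o(n)$; this gives the $L^2$ limit, hence convergence in probability, and in particular boundedness in $L^2$. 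Note that $Q_n$ is a function of $X_1,\dots,X_n$ only: it is the cyclic sum of squared increments of $\varphi$ along the sorted sample.

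\textbf{Step 1: Efron--Stein.} Let $Q_n^{(k)}$ denote $Q_n$ with $X_k$ replaced by an independent copy $X_k'$. Removing $X_k$ from the sorted cycle deletes the two edges adjacent to $X_k$ and creates one edge between its former neighbours. Inserting $X_k'$ deletes one edge and creates two. Each edge contributes at most $4\|f\|_\infty^2$, so $|Q_n - Q_n^{(k)}|$ is bounded by an absolute constant times $\|f\|_\infty^2$. Efron--Stein then gives only $\Var(Q_n) \le C n$, which is exactly the boundedness in $L^2$ of $n^{-1/2}(R_n^{(n)}-\E R_n^{(n)})$, but not the vanishing.

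\textbf{Step 2: refine the increment bound.} The point is that the local changes are small on average. Write $D_k := |Q_n - Q_n^{(k)}|$; each edge term appearing in $D_k$ is the squared $\varphi$-increment between nearest neighbours in the sorted sample (or in the sample with $X_k'$ inserted). We have
$$
D_k^2 \le C\|f\|_\infty^2 \sum_{e \in E_k} (\text{edge term } e),
$$
where $E_k$ consists of at most six edges touching $X_k$ or $X_k'$. Summing over $k$ and taking expectations, exchangeability reduces the Efron--Stein bound to $C\|f\|_\infty^2\, n\, \E[(\varphi(X_1)-\varphi(X_{N_n(1)}))^2]$ plus analogous terms for the sample augmented by $X'$ (of size $n+1$). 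This is $C n \Delta_n$ up to the same quantity at sample size $n\pm1$. By \cite[Corollary A9]{chatterjee2021new}, as already used in the consistency proof, $\Delta_n \to 0$ for any bounded measurable $\varphi$. Hence $\Var(Q_n) = o(n)$, which concludes the argument.

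\textbf{Main obstacle.} The delicate point is Step 2. One must check that the edges created by removal (joining the two former neighbours of $X_k$) and by insertion of $X_k'$ are still nearest-neighbour edges in a sample of size $n-1$ or $n+1$, so that exchangeability applies and their contribution is controlled by $\Delta_{n\pm1}$. The cyclic wrap-around edge needs separate handling, but it contributes only $\Oc(\|f\|_\infty^4)$ in total.
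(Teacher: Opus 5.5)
Your Efron--Stein route is valid and genuinely different from the paper's, but the step you flagged as the main obstacle needs a different justification from the one you propose.

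\textbf{Comparison with the paper.} The paper uses exchangeability to write $\Var\bigl(\sum_i h(X_i,X_{N_n(i)})\bigr)=n\Var\bigl(h(X_1,X_{N_n(1)})\bigr)+n(n-1)\Cov\bigl(h(X_1,X_{N_n(1)}),h(X_2,X_{N_n(2)})\bigr)$, with $h(x,y)=(\varphi(x)-\varphi(y))^2$. It then proves $\limsup_n (n-1)\Cov(\cdots)\le 0$ by reducing to uniform points on the circle and writing the joint law of $(X_1,X_{N_n(1)},X_2,X_{N_n(2)})$ explicitly. This involves a binomial split of the other $n-2$ points between the two arcs, Beta spacings, and separate treatment of the singular part (one point being the other's neighbour) and the absolutely continuous part.

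Your argument avoids all of this geometry. It gives a quantitative bound $\Var(n^{-1/2}Q_n)\le C\|f\|_\infty^2\,\Delta_n$, hence a rate whenever $\Delta_n$ has one. The bound is also automatically uniform over the indicators $\1_{(-\infty,t]}$, which is what the Cram\'er--von Mises part later needs for dominated convergence in $t$. The paper's computation gives finer information on the covariance of two nearest-neighbour pairs, which the lemma itself does not require.

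\textbf{The step to fix.} The edge created by deleting $X_k$ joins its two neighbours $p,s$. In the $(n-1)$-sample it is the gap containing the independent point $X_k$, i.e.\ a length-biased gap. So exchangeability of the $(n-1)$-sample does not give it mean $\Delta_{n-1}$. The same applies to the edge destroyed when $X_k'$ is inserted.

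You do not need that comparison. Let $S$ be the sample without $X_k$ and $p,s$ the cyclic neighbours of $x$ in $S$. Then $Q(S\cup\{x\})-Q(S)=h(p,x)+h(x,s)-h(p,s)$, and $0\le h(p,s)\le 2h(p,x)+2h(x,s)$. Hence $|Q(S\cup\{x\})-Q(S)|\le h(p,x)+h(x,s)$.

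Therefore $D_k$ is bounded by the two edges adjacent to $X_k$ in the original sample plus the two edges adjacent to $X_k'$ in the resampled one. Both configurations are i.i.d.\ $n$-samples, not of size $n+1$. With the cyclic convention, each of these four edges has mean exactly $\Delta_n$. So neither an $n\pm1$ comparison nor a separate wrap-around treatment is needed.

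Each edge is at most $4\|f\|_\infty^2$. This gives $\E[D_k^2]\le 64\|f\|_\infty^2\Delta_n$, and Efron--Stein then yields $\Var(Q_n)\le 32\|f\|_\infty^2\,n\,\Delta_n=o(n)$.
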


\begin{proof}
See Subsection \ref{subsection:remainder_control}.
\end{proof}

Now we have all the ingredients to finish the proof of asymptotic normality.
 
\subsection{Proof of the asymptotic normality \texorpdfstring{\eqref{eq:main_thm_TCL}}{} }

Essentially we need to prove the asymptotic normality of the martingale part. To this end we shall apply the multivariate martingale Central Limit Theorem to $\Mc^{(n)}$. The usual Lindeberg conditions are
\begin{itemize}
    \item \textbf{The convergence of the bracket:}    
     \begin{equation}
        \frac1n \langle \Mc^{(n)}(f), \Mc^{(n)}(g) \rangle_n 
        \xrightarrow[n \to \infty]{\P} \Sigma_1(f,g) \ ,
    \end{equation}
    where $\Sigma_1(f,g)$ is provided by Lemma \ref{lemma:preliminary2}.

    \item \textbf{The Lindeberg condition:}
    \begin{equation}
        \forall \varepsilon > 0, \quad \frac 1n \sum_{k=1}^n \E\left[ \|\Delta \Mc^{(n)}_k(f)\|^2 \, \1_{\{\|\Delta \Mc^{(n)}_k(f)\| > \varepsilon\}} \mid \mathcal{F}_{k-1} \right]
        \xrightarrow[n \to \infty]{\P} 0,
    \end{equation}
    is obvious since increments $\left( \|\Delta \Mc^{(n)}_k(f)\| \ ; \ k \geq 1 \right)$ are bounded. This holds also for $g$.
\end{itemize}
Under these two conditions we have the convergence in law
$$
\frac{1}{\sqrt n} \Mc^{(n)}_n(f) \xrightarrow[n \to \infty]{\mathcal{L}} \mathcal{N}(0, \Sigma_1(f,f)) \ .
$$
Then since this convergence holds conditionally to $\Fc_0$, this can be combined with the convergence from Lemma \ref{lemma:preliminary1} and \ref{lemma:preliminary3} in order to obtain that
\begin{align}
\frac{ \Ac_n^{(n)}(f) - \E[\Ac_n^{(n)}(f)] }{\sqrt n}
+
\frac{1}{\sqrt n} \Mc^{(n)}_n(f)
\xrightarrow[n \to \infty]{\mathcal{L}}
\Nc\left( 0, \Sigma_0(f,f)+\Sigma_1(f,f) \right) \ .
\label{eq:clt_partial}
\end{align}

Now recall from the decomposition \eqref{eq:decomp_sketch} that
$$
    \widehat{\theta}_n = \widehat{\theta}_n(f) = \frac{1}{n}\begin{pmatrix}
R_n^{(n)}(f) \\ 0 \\ 0
\end{pmatrix}
+ \frac{\Ac_n^{(n)}(f)}{n}
+ \frac{\Mc_n^{(n)}(f)}{n}
\ .
$$
Then we invoke Slutsky's Lemma on Eq. \eqref{eq:clt_partial} and Lemma \ref{lemma:remainder_control}. This yields
\begin{align}
& 
\sqrt{n}\left( \widehat{\theta}_n - \E[\widehat{\theta}_n] \right)
\nonumber \\
= & \ 
\frac{1}{\sqrt n}
\begin{pmatrix}
R_n^{(n)}(f) - \E[ R_n^{(n)}(f) ]\\
0 \\
0
\end{pmatrix}
+
\frac{ \Ac_n^{(n)}(f) - \E[\Ac_n^{(n)}(f)] }{\sqrt n}
+
\frac{1}{\sqrt n} \Mc^{(n)}_n(f)
\nonumber 
\\
\xrightarrow[n \to \infty]{\mathcal{L}} & \ 
\mathcal{N}(0,\Sigma_0(f,f)+\Sigma_1(f,f)) \ .
\label{eq:clt_final}
\end{align}

\medskip

{\bf Analysis of $\Delta_n$, relation to remainder and $\E[\widehat{\theta}_n(f)]$.}
The two expressions for $\Delta_n$ in Eq. \eqref{eq:def_Delta_n} coincide because the $\left( (X_i, X_{N_n(i)}) \ ; \ 1 \leq i \leq n \right)$ are exchangeable
\begin{align*}
    \Delta_n
= & \ \frac{1}{n} \sum_{i=1}^n \E \left( \varphi(X_{\sigma_n(i-1)}) - \varphi(X_{\sigma_n(i)}) \right)^2 \\
= & \ \frac{1}{n} \sum_{i=1}^n \E \left( \varphi(X_{i}) - \varphi(X_{N_n(i)}) \right)^2 \\
= & \ \E \left[\left( \varphi(X_{1}) - \varphi(X_{N_n(1)}) \right)^2\right] \ .
\end{align*}
Recalling the expression of the remainder from Proposition~\ref{proposition:double_doob},
\begin{align*}
    R_n^{(n)}(f) & = \Oc( \|f\|_\infty^2 )
- \half
\sum_{i=1}^n \left( \E_\varepsilon[f(X_{\sigma_n(i-1)}, \varepsilon)] -
\E_\varepsilon[f(X_{\sigma_n(i)}, \varepsilon)] 
\right)^2 \\
& = \Oc( \|f\|_\infty^2 )
- \half
\sum_{i=1}^n \left( \varphi(X_{\sigma_n(i-1)}) - \varphi(X_{\sigma_n(i)}) \right)^2 
\ ,
\end{align*}
we have
\begin{align*}
    \E[ R_n^{(n)}(f)] & = \Oc( \|f\|_\infty^2 )
- \half n \Delta_n \ .
\end{align*}
As such
\begin{align}
\label{eq:expectation_theta_n}
\E[\widehat{\theta}_n(f)]
    & = 
    \begin{pmatrix}
    \frac{1}{n} \E[ R_n^{(n)}(f) ] \\ 0 \\ 0
    \end{pmatrix} 
    + 
    \theta^*(f)
    = 
    \begin{pmatrix}
    \Oc\left(\frac{\|f\|_\infty}{n}\right) - \half \Delta_n \\ 0 \\ 0
    \end{pmatrix} 
    + 
    \theta^*
    \xrightarrow[n \to \infty]{}
    \ \theta^*(f)
    \ .    
\end{align}

\medskip

{\bf Delta method.} At this stage, to obtain the final result we just need to apply the so-called Delta method to the function $h$ from Eq. \eqref{eq:Xi_as_function}, which we recall
\begin{align*}
    h(t, s_1, s_2) := \frac{t - s_1^2}{s_2 - s_1^2} \ .
\end{align*}
Set $\mathcal{D} = \{ \theta=(t,s_{1},s_{2}) \in \R^{3} \! : \;s_{2}>s_{1}^{2}\}$ and specialize the value $\E[ \widehat{\theta}_n(f) ]$ in Eq. \eqref{eq:expectation_theta_n}.
This evaluates to
$$
h\left( \E[ \widehat{\theta}_n] \right)
=
h(\theta^*) - \frac{\Delta_n}{2 \Var(Y)}
=
\rho^{\mathrm{Sobol'}} - \frac{\Delta_n}{2\Var(Y)}
\ .
$$
Since $\Var(Y)=s_{2}-s_{1}^{2}>0$, the limiting point $\theta^*$ lies in the interior of $\mathcal D$, where $h$ is smooth.
Its gradient vector is non-vanishing and takes the form
$$
\nabla h(t,s_1,s_2)=
\left(
\dfrac{1}{s_{2}-s_{1}^{2}},\;
\dfrac{2s_{1}(t-s_{2})}{(s_{2}-s_{1}^{2})^{2}},\;
-\dfrac{t-s_{1}^{2}}{(s_{2}-s_{1}^{2})^{2}}
\right)^{\!\top} \ .
$$
The usual regularity conditions are satisfied, so the multivariate Delta method applies to the joint CLT~\eqref{eq:clt_final}. Consequently,
$$
\sqrt n \left[ \widehat{\rho}_n^{\mathrm{Sobol'}}
        - \left( \rho^{\mathrm{Sobol'}} - \frac{\Delta_n}{2\Var(Y)} \right)
        \right] 
= 
\sqrt n \left[ h\left( \widehat{\theta}_n \right)
        - h\left( \E[ \widehat{\theta}_n] \right) 
        \right]
\xrightarrow[n \to \infty]{\mathcal{L}}
\Nc\left( 0, \sigma^2_{\mathrm{Sobol'}} \right) \ ,
$$
with
$$
\sigma^2_{\mathrm{Sobol'}} =
\nabla h(\theta^*)^\top \ \left( \Sigma_{0}(f,f)+\Sigma_{1}(f,f) \right) \ \nabla h(\theta^*) \ .
$$
Upon checking that the gradient with specialized values is indeed the vector $v(f)$ of Eq. \eqref{eq:def_g}, we recover indeed the expression announced in Eq. \eqref{eq:def_sigmaXY}.
This completes the proof of the Sobol estimator’s central limit theorem. 

\subsection{Proof of Lemma \ref{lemma:preliminary2}}
\label{subsection:proof_lemma_preliminary2}
Before diving into the details of the proof, let us give the general strategy.
We will implement the following procedure for the bracket $\langle M^{(n)}(f),M^{(n)}(g) \rangle_n$.
\begin{itemize}
    \item Step 1: We shall compute the bracket $\langle M^{(n)}(f),M^{(n)}(g) \rangle_j$ by using discrete rules of stochastic calculus.
    \item Step 2: We shall compute the conditional expectation with respect to $\mathcal F_0$, i.e. 
    $$ \E\left[ \langle M^{(n)}(f),M^{(n)}(g) \rangle_j \mid \mathcal F_0 \right]$$
    and observe that 
    $$ \left( \langle M^{(n)}(f),M^{(n)}(g) \rangle_j - \E[ \langle M^{(n)}(f),M^{(n)}(g) \rangle_j \mid \mathcal F_0] \ ; \ 1 \leq j \leq n \right) 
    $$
    is a martingale with bounded increments. This implies that, almost surely
    \begin{align}
        \label{eq:zero_limit}
        \lim_{n\to\infty}\frac 1n \left( \langle M^{(n)}(f),M^{(n)}(g) \rangle_n - 
        \E\left[ \langle M^{(n)}(f),M^{(n)}(g) \rangle_n \ | \ \Fc_0 \right] \right) = 0 \ .
    \end{align}
    We are thus reduced to computing the limit $\lim_{n} \frac1n \E\left[ \langle M^{(n)}(f),M^{(n)}(g) \rangle_n \ | \ \Fc_0 \right]$.
    \item Step 3: The expressions of $\E\left[ \langle M^{(n)}(f),M^{(n)}(g) \rangle_n \ | \ \Fc_0 \right]$ will be given by sum with terms involving $X_{\sigma_n(i-1)}$, $X_{\sigma_n(i+1)}$ and $X_{\sigma_n(i)}$. We replace $X_{\sigma_n(i-1)}$, $X_{\sigma_n(i+1)}$ by $X_{\sigma_n(i)}$ in the relevant formula by adding and subtracting suitable terms accordingly. The final aim is to obtain a formula of the type
    $$ \E\left[ \langle M^{(n)}(f),M^{(n)}(g) \rangle_n \ | \ \Fc_0 \right]
       = R_n^\Mc + \sum_{i=1}^n H_\Mc(X_i) \ ,$$
    for a suitable function $H_\Mc$ and a remainder term. 
    In fact, in agreement with our notations in \eqref{eq:cov_sigma1_fg}, we shall see that
    \begin{align} 
      \label{eq:HM}
    H_{\Mc}(x) = & \ \Sigma_a(x,f,g) \odot \Sigma_b(x,f,g)
    \end{align}
    \item Step 4: We will prove the limit in probability
    \begin{align}
      \label{eq:remainder_RM_asymptotics}
      \P-\lim_{n\to\infty} \frac{R_n^\Mc}{n} & = \ 0 \ .
    \end{align}
    \item Conclusion: The final limit is obtained by the strong law of large numbers for the i.i.d. random variables $\left( H_\Mc(X_i) \ ; \ i \in \N^* \right)$. All in all, we find that
    \begin{align*}
        \P-\lim_{n\to\infty}\frac{1}{n}\langle \Mc^{(n)}(f), \Mc^{(n)}(g) \rangle_n
    = & \ \E\left[ H_\Mc(X) \right] \ .
    \end{align*}    
\end{itemize}
{\bf Step 1:} It is worth noticing that
we have
$
N_n(f) = \sum_{i=1}^n \Delta \Mk_i^{(n)}(f) 
$
and likewise for $N_n(f^2)$.
Recalling the expression of Eq. \eqref{eq:main_martingale_part}
$$
M_j^{(n)} = M_j^{(n)}(f) :=  \sum_{i=1}^{j} \left( f(X_{\sigma_n(i-1)}, \varepsilon_{i-1}) + \E_\varepsilon[f(X_{\sigma_n(i+1)}, \varepsilon)] \right)
     \cdot \Delta \Mk_i^{(n)}(f) \ ,
$$
we see that we need to compute the bracket of a vector martingale, where all components are discrete stochastic integrals with respect to the {\it same} basic martingale $\left( \Delta \Mk_i^{(n)}(f) ; \ 1 \leq i \leq n \right)$. Although we are using different $f$'s, the basic martingale remains the same.
Write
\begin{align*}
  \Mc_j^{(n)}(f)
= & \
\begin{pmatrix}
M_j^{(n)}(f) \\
N_j(f) \\
N_j(f^2)
\end{pmatrix}
= \ \sum_{i=1}^j
a_i(f)
\odot
\xi_i(f)
\ ,
\end{align*}
where the symbol $\odot$ stands for the Hadamard (component-wise) product, and
$$
a_i(f) := 
\begin{pmatrix}
f(X_{\sigma_n(i-1)}, \varepsilon_{i-1}) + \E_\varepsilon[f(X_{\sigma_n(i+1)}, \varepsilon)] \\
1 \\
1
\end{pmatrix}
\ , \quad
\xi_i(f) :=
\begin{pmatrix}
\Delta \Mk_i^{(n)}(f) \\
\Delta \Mk_i^{(n)}(f) \\
\Delta \Mk_i^{(n)}(f^2)
\end{pmatrix} \ .
$$

Now, we invoke the rules of discrete stochastic calculus and matrix algebra to obtain
\begin{align*}
  & \langle \Mc^{(n)}(f), \Mc^{(n)}(g) \rangle_j \\
= & \ \sum_{i=1}^j \E\left[ \left( a_i(f) \odot \xi_i(f) \right)
                            \left( a_i(g) \odot \xi_i(g) \right)^T
                            \ | \ \Fc_{i-1} \right] \\
= & \ \sum_{i=1}^j \E\left[ \left( a_i(f) a_i(g)^T \right)
                            \odot
                            \left( \xi_i(f) \xi_i(g)^T \right)
                            \ | \ \Fc_{i-1} \right] \\
= & \ \sum_{i=1}^j \left( a_i(f) a_i(g)^T \right)
                            \odot
                   \E\left[ \xi_i(f) \xi_i(g)^T
                            \ | \ \Fc_{i-1} \right] \ .
\end{align*}

Then recall the notation of Eq. \eqref{eq:sigma_b} and notice that
$$
\E\left[ \xi_i(f) \xi_i(g)^T \ | \ \Fc_{i-1} \right]
= 
\Cov_\varepsilon\left( 
\begin{pmatrix}
f(x, \varepsilon)\\
f(x, \varepsilon)\\
f^2(x, \varepsilon)
\end{pmatrix} ,
\begin{pmatrix}
g(x, \varepsilon)\\
g(x, \varepsilon)\\
g^2(x, \varepsilon)
\end{pmatrix}
\right)_{|x = X_{\sigma_n(i)}} 
= \Sigma_b(X_{\sigma_n(i)} , f, g)
\ , 
$$
which is in fact $\Fc_0$-measurable. Here $\Cov_\varepsilon$ means that the covariance is computed by averaging over $\varepsilon$ 
while keeping $x$ fixed. The variable $x$ is then specialized to $x=X_{\sigma_n(i)}$.

In the end
\begin{align*}
    \langle \Mc^{(n)}(f), \Mc^{(n)}(g) \rangle_j
= & \ \sum_{i=1}^j \left( a_i(f) a_i(g)^T \right)
                            \odot
                   \Sigma_b( X_{\sigma_n(i)}, f, g) \ .
\end{align*}

\medskip 

{\bf Step 2:}
Now notice that
\begin{align*}
   & \langle \Mc^{(n)}(f), \Mc^{(n)}(g) \rangle_j
   - \E\left[ \langle \Mc^{(n)}(f), \Mc^{(n)}(g) \rangle_j \ | \ \Fc_0 \right] \\
 = & \ \sum_{i=1}^j \left[ a_i(f) a_i(g)^T 
                      - \E\left( a_i(f) a_i(g)^T \ | \ \Fc_0\right)
                \right]
                            \odot
                   \Sigma_b( X_{\sigma_n(i)}, f, g) \ .
\end{align*}
is an $\F$-martingale with bounded increments. Thus Eq. \eqref{eq:zero_limit} holds. 
In order to prove the desired result, as announced, we only need to study 
$$
\E\left[ \langle \Mc_j^{(n)}(f), \Mc_j^{(n)}(g) \rangle_j \ | \ \Fc_0 \right]
= \ \sum_{i=1}^j \E\left( a_i(f) a_i(g)^T \ | \ \Fc_0\right)
                  \odot\Sigma_b( X_{\sigma_n(i)}, f, g)
$$

\medskip

{\bf Step 3:}
Write
$$
   b_i(f) := \E\left( a_i(f) \ | \ \Fc_0\right)
          = 
\begin{pmatrix}
\E_\varepsilon[ f(X_{\sigma_n(i-1)}, \varepsilon) ] + \E_\varepsilon[f(X_{\sigma_n(i+1)}, \varepsilon)] \\
1 \\
1
\end{pmatrix} 
\ .
$$
By the mean-covariance decomposition
\begin{align*}
  & \ \E\left( a_i(f) a_i(g)^T \ | \ \Fc_0\right) \\
= & \ b_i(f) b_i(g)^T
    + \E\left( \left( a_i(f) - b_i(f) \right)
               \left( a_i(g) - b_i(f) \right)^T \ | \ \Fc_0\right) \\
= & \ b_i(f) b_i(g)^T
    + \Cov_\varepsilon[ f(X_{\sigma_n(i-1)}, \varepsilon),
      g(X_{\sigma_n(i-1)}, \varepsilon)
      ]
      E_{11} \ .
\end{align*}
Here $E_{11} = e_1 e_1^T \in M_3(\R)$ is the elementary matrix associated to the first canonical basis vector $e_1 = \begin{pmatrix} 1 \\ 0 \\ 0 \end{pmatrix}$.
Upon substituting and permuting indices, we find
\begin{align*}
  & \ \E\left[ \langle \Mc_n^{(n)}(f), \Mc_j^{(n)}(g) \rangle \ | \ \Fc_0 \right]\\
= & \ \sum_{i=1}^n \E\left( a_i(f) a_i(g)^T \ | \ \Fc_0\right)
                  \odot
                  \Sigma_b( X_{\sigma_n(i)}, f, g) \\
= & \ \sum_{i=1}^n \left(  b_i(f) b_i(g)^T
    + \Cov_\varepsilon[ f(X_{\sigma_n(i-1)}, \varepsilon),
      g(X_{\sigma_n(i-1)}, \varepsilon)
      ]
      E_{11} \right)
                  \odot
                  \Sigma_b( X_{\sigma_n(i)}, f, g) \\
= & \ R_n^\Mc
    + \sum_{i=1}^n H_\Mc(X_i) \ ,
\end{align*}
where the remainder $R_n^\Mc$ is of the form
$$
   R_n^\Mc := \sum_{i=1}^n R_{n,i} \odot \Sigma_b(X_{\sigma_n(i)}, f, g) \ ,
$$
\begin{align*}
  R_{n,i} := & 
  \begin{pmatrix}
    \E_\varepsilon[ f(X_{\sigma_n(i-1)}, \varepsilon) ] + \E_\varepsilon[f(X_{\sigma_n(i+1)}, \varepsilon)] \\
    1 \\
    1
    \end{pmatrix}
    \begin{pmatrix}
    \E_\varepsilon[ g(X_{\sigma_n(i-1)}, \varepsilon) ] + \E_\varepsilon[g(X_{\sigma_n(i+1)}, \varepsilon)] \\
    1 \\
    1
    \end{pmatrix}^T 
    \\ & -
    \begin{pmatrix}
      2 \E_\varepsilon[ f(X_{\sigma_n(i)}, \varepsilon) ] \\
      1 \\
      1
      \end{pmatrix}
      \begin{pmatrix}
      2 \E_\varepsilon[ g(X_{\sigma_n(i)}, \varepsilon) ] \\
      1 \\
      1
      \end{pmatrix}^T
  \\ & \ +
  \left(
    \Cov_\varepsilon[ f(X_{\sigma_n(i-1)}, \varepsilon),
      g(X_{\sigma_n(i-1)}, \varepsilon)
      ]
    - 
    \Cov_\varepsilon[ f(X_{\sigma_n(i)}, \varepsilon),
      g(X_{\sigma_n(i)}, \varepsilon)
      ]
  \right)
      E_{11} \ .
\end{align*}
and
\begin{align*}
H_{\Mc}(x) &= 
\left(
\begin{pmatrix}
2\,\E_{\varepsilon} \left[f(x,\varepsilon)\right] \\
1 \\
1
\end{pmatrix}
\begin{pmatrix}
2\,\E_{\varepsilon} \left[g(x,\varepsilon)\right] \\
1 \\
1
\end{pmatrix}^{\top}
+ \operatorname{Cov}_{\varepsilon}\!\left[f(x,\varepsilon),\,g(x,\varepsilon)\right]\,
E_{11}
\right)
\odot
\Sigma_b(x,f,g)\nonumber\\
&=\Sigma_a(x,f,g)\odot
\Sigma_b(x,f,g) \ .
\end{align*}
We are thus done with step 3. Notice that $H_\Mc$ agrees with the announced form in Eq. \eqref{eq:HM}.

\medskip

{\bf Step 4:}
Now let us consider the remainder $R^\Mc_n$. To that end, we consider the $\ell_1$ norm of matrices denoted $\Vert\cdot\Vert,$ that is if $A=(a_{ij})$, we have
$$\Vert A\Vert=\sum_{ij}\vert a_{ij}\vert.$$ 
Notice from the definition $\Sigma_b(x,f,g)$ in Eq. \eqref{eq:sigma_b}, we have the elementary bound
\begin{align*}
\max_{i,j} |(\Sigma_b(X,f,g))_{i,j}|
\leq & \ 2 \max_{k,l \in \{1,2\} }( \|f\|_\infty^{k} \|g\|_\infty^{l} ) \\
\leq & \ 2 \ (1+\|f\|_\infty)^2 \ (1+\|g\|_\infty)^2 \ .
\end{align*}
Because $\Vert A\odot B\Vert\leq \Vert A\Vert \max_{ij}\vert b_{ij}\vert\leq \Vert A\Vert\Vert B\Vert$, we see that
$$
\| R^\Mc_n \| 
\leq 
\ \sum_{i=1}^n \| R_{n,i} \| \ \max_{i,j} |(\Sigma_b(X,f,g))_{i,j}|
\leq
\ 2 \ (1+\|f\|_\infty)^2 \ (1+\|g\|_\infty)^2
\sum_{i=1}^n \| R_{n,i} \|
\ .
$$
As such, we are done upon proving
$$
\P-\lim_{n \rightarrow \infty} \frac{1}{n} \sum_{i=1}^n \| R_{n,i} \| = 0 \ .
$$

Let us write
$$R_{n,i}=A_{n,i}+B_{n,i}$$
with
\begin{align*}
  A_{n,i} := & 
  \begin{pmatrix}
    \E_\varepsilon[ f(X_{\sigma_n(i-1)}, \varepsilon) ] + \E_\varepsilon[f(X_{\sigma_n(i+1)}, \varepsilon)] \\
    1 \\
    1
    \end{pmatrix}
    \begin{pmatrix}
    \E_\varepsilon[ g(X_{\sigma_n(i-1)}, \varepsilon) ] + \E_\varepsilon[g(X_{\sigma_n(i+1)}, \varepsilon)] \\
    1 \\
    1
    \end{pmatrix}^T 
    \\ & -
    \begin{pmatrix}
      2 \E_\varepsilon[ f(X_{\sigma_n(i)}, \varepsilon) ] \\
      1 \\
      1
      \end{pmatrix}
      \begin{pmatrix}
      2 \E_\varepsilon[ g(X_{\sigma_n(i)}, \varepsilon) ] \\
      1 \\
      1
      \end{pmatrix}^T
  \\ B_{n,i}:=& \ 
  \left(
    \Cov_\varepsilon[ f(X_{\sigma_n(i-1)}, \varepsilon),
      g(X_{\sigma_n(i-1)}, \varepsilon)
      ]
    - 
    \Cov_\varepsilon[ f(X_{\sigma_n(i)}, \varepsilon),
      g(X_{\sigma_n(i)}, \varepsilon)
      ]
  \right)
      E_{11} \ .
\end{align*}
By the triangle inequality, we have $\Vert R_{n,i}\Vert\leq \Vert A_{n,i}\Vert+\Vert B_{n,i}\Vert.$ In order to control, we shall use the following structure already visible in $B_{n,i}$. There exist a bounded measurable function $B$ such that
\begin{align}
\label{eq:control_B}
\Vert B_{n,i}\Vert
\leq
\left|
B(X_{\sigma_n(i)}) - B(X_{\sigma_n(i-1)}) 
\right| \ .
\end{align}
One can clearly take $B(x) = \Cov_\varepsilon\left[ f(x,\varepsilon), g(x,\varepsilon) \right]$. Let us show that there exist a bounded measurable $A$ such that
\begin{align}
\label{eq:control_A}
\Vert A_{n,i}\Vert
\leq & 
\left|
A^{(1)}(X_{\sigma_n(i)}) - A^{(1)}(X_{\sigma_n(i-1)}) 
\right| 
+
\left|
A^{(1)}(X_{\sigma_n(i+1)}) - A^{(1)}(X_{\sigma_n(i)}) \right| \\
  & \quad +
\left|
A^{(2)}(X_{\sigma_n(i)}) - A^{(2)}(X_{\sigma_n(i-1)}) 
\right| 
+
\left|
A^{(2)}(X_{\sigma_n(i+1)}) - A^{(2)}(X_{\sigma_n(i)}) 
\right|
\nonumber
\ .
\end{align}
In order to do so, we compute explicitly the individual entries of $A_{n,i}$. We have
$$
\forall (k,l) \in \{2,3\}^2, \ 
[ A_{n,i} ]_{k,l} = 0 \ .
$$
For the remaining off-diagonal entries, we have
$$
  [ A_{n,i} ]_{1,2} 
= [ A_{n,i} ]_{1,3}
=  \E_\varepsilon[ f(X_{\sigma_n(i-1)}, \varepsilon) ]
 + \E_\varepsilon[ f(X_{\sigma_n(i+1)}, \varepsilon) ]
- 2 \E_\varepsilon[f(X_{\sigma_n(i)}, \varepsilon)] \ ,
$$
$$
  [ A_{n,i} ]_{2,1} 
= [ A_{n,i} ]_{3,1}
=  \E_\varepsilon[ g(X_{\sigma_n(i-1)}, \varepsilon) ]
 + \E_\varepsilon[ g(X_{\sigma_n(i+1)}, \varepsilon) ]
- 2 \E_\varepsilon[g(X_{\sigma_n(i)}, \varepsilon)] \ .
$$
And 
\begin{align*}
  [ A_{n,i} ]_{1,1} 
= &
 \left(
   \E_\varepsilon[ f(X_{\sigma_n(i-1)}, \varepsilon) ]
 + \E_\varepsilon[ f(X_{\sigma_n(i+1)}, \varepsilon)]
 \right)
 \left(
   \E_\varepsilon[ g(X_{\sigma_n(i-1)}, \varepsilon) ]
 + \E_\varepsilon[ g(X_{\sigma_n(i+1)}, \varepsilon)]
 \right)\\
  & \quad 
- 4 \E_\varepsilon[f(X_{\sigma_n(i)}, \varepsilon)]
    \E_\varepsilon[g(X_{\sigma_n(i)}, \varepsilon)] \ .
\end{align*}

Let $a_i$, $b_i$ be two sequences of real numbers bounded by the same constant $K>0$, we have the following identity
\begin{align*}
  & (a_{i-1}+a_{i+1})(b_{i-1}+b_{i+1}) - 4 a_i b_i\\
= & \ (a_{i+1}-a_{i})(b_{i-1}+b_{i+1})
    + (a_{i-1}-a_{i})(b_{i-1}+b_{i+1})
    + 2 a_{i} (b_{i-1}+b_{i+1}) - 4 a_i b_i \\
= & \ (a_{i+1}-a_{i})(b_{i-1}+b_{i+1})
    + (a_{i-1}-a_{i})(b_{i-1}+b_{i+1}) \\
  & \ 
    + 2 a_{i} (b_{i-1}+b_{i+1} - 2b_i) \ .
\end{align*}
As a consequence, we have by the triangle inequality and simple bounds
$$
\left| (a_{i-1}+a_{i+1})(b_{i-1}+b_{i+1}) - 4 a_i b_i \right|
\leq 2 K
     \left( |a_{i+1}-a_{i}| + |a_{i}-a_{i-1}|
          + |b_{i+1}-b_{i}| + |b_{i}-b_{i-1}| \right) \ .
$$
We shall apply this identity for $a_i=\E_\varepsilon[f(X_{\sigma_n(i)}, \varepsilon)]$ and $b_i=\E_\varepsilon[g(X_{\sigma_n(i)}, \varepsilon)]$. We can take $K = \max( \|f\|_\infty, \|g\|_\infty )$. This yields that
\begin{align*}
       \left| [ A_{n,i} ]_{1,1}  \right|
\leq & \ 2 K
     \left( |a_{i+1}-a_{i}| + |a_{i}-a_{i-1}|
          + |b_{i+1}-b_{i}| + |b_{i}-b_{i-1}| \right) \ .
\end{align*}
Now notice that all the other entries of $A_{n,i}$ satisfy the same bound (with a different $K>0$ for each). As such there exist a $\widetilde{K}>0$ such that
\begin{align*}
       \left\| A_{n,i} \right\|
\leq & \ \widetilde{K}
     \left( |a_{i+1}-a_{i}| + |a_{i}-a_{i-1}|
          + |b_{i+1}-b_{i}| + |b_{i}-b_{i-1}| \right) \ .
\end{align*}
By taking $A_i^{(1)}(x) := \widetilde{K} \E_\varepsilon[f(x, \varepsilon)]$, and
$A_i^{(2)}(x) := \widetilde{K} \E_\varepsilon[g(x, \varepsilon)]$, we have indeed established Eq. \eqref{eq:control_A}.

Now starting from Eq. \eqref{eq:control_A} and Eq. \eqref{eq:control_B}, we have by changing indices and using exchangeability
\begin{align*}
     & \ \frac{1}{n} \E \sum_{i=1}^n \| R_{n,i} \| \\
\leq & \ \frac{1}{n} \sum_{i=1}^n \E\left[ 
\left|
A^{(1)}(X_{\sigma_n(i)}) - A^{(1)}(X_{\sigma_n(i-1)}) 
\right| + 
\left|
A^{(2)}(X_{\sigma_n(i)}) - A^{(2)}(X_{\sigma_n(i-1)}) 
\right| + 
\left|
B(X_{\sigma_n(i)}) - B(X_{\sigma_n(i-1)}) 
\right| \right] \\
\leq & \ \E \left| A^{(1)}(X_{1}) - A^{(1)}(X_{N_n(1)}) \right|
       + \E \left| A^{(2)}(X_{1}) - A^{(2)}(X_{N_n(1)}) \right|
       + \E \left| B(X_{1}) - B(X_{N_n(1)}) \right| \ .
\end{align*}

The estimates of Chatterjee \cite[Corollary A9]{chatterjee2021new} tells us this expectation vanishes as $n \rightarrow \infty$. Hence convergence in $L^1( \Omega, \P)$ to zero, which yields convergence in probability as announced.

\subsection{Proof of Lemma \ref{lemma:remainder_control}}
\label{subsection:remainder_control}

We now examine the expression of the remainder from Proposition~\ref{proposition:double_doob}. Recall that
\begin{align*}
    R_n^{(n)}(f) 
& = \Oc( \|f\|_\infty^2 )
- \half
\sum_{i=1}^n \left( \E_\varepsilon[f(X_{\sigma_n(i-1)}, \varepsilon)] -
\E_\varepsilon[f(X_{\sigma_n(i)}, \varepsilon)] 
\right)^2 \\
& = \Oc( \|f\|_\infty^2 )
- \half
\sum_{i=1}^n \left( \varphi(X_{\sigma_n(i-1)}) - \varphi(X_{\sigma_n(i)}) \right)^2 \ .
\end{align*}

For shorter notations, let us write
$$
  h(x,y) := \left( \varphi(x) - \varphi(y) \right)^2 \ .
$$
Following Chatterjee's estimate \cite[Corollary A9]{chatterjee2021new}, a useful statement is that, for $i=1,2$
\begin{align}
    \label{eq:CV_h}
    \lim_{n \rightarrow \infty} h(X_i, X_{N_n(i)}) = 0
\end{align}
in probability and in $L^p(\Omega, \P)$. To establish the result, we compute the variance
\begin{align*}
  & \ \Var\left[ n^{-\half} R_n^{(n)}(f) \right]
=   \ \E\left[ \left| n^{-\half} R_n^{(n)}(f)
                  - \E n^{-\half} R_n^{(n)}(f) \right|^2 \right] \\
\leq & \ \frac{1}{n} \Oc( \|f\|_\infty^2 )
     + 
     \frac{1}{n}
     \Var\left[ \sum_{i=1}^n \left( \varphi(X_{\sigma_n(i-1)}) - \varphi(X_{\sigma_n(i)}) \right)^2 \right] \\
= & \ \frac{1}{n} \Oc( \|f\|_\infty^2 )
     + 
     \frac{1}{n}
     \sum_{i,j=1}^n  \Cov\left(
      \left( \varphi(X_{\sigma_n(i-1)}) - \varphi(X_{\sigma_n(i)}) \right)^2 ,
      \left( \varphi(X_{\sigma_n(j-1)}) - \varphi(X_{\sigma_n(j)}) \right)^2
\right) \ .
\end{align*}
Upon re-indexing the double sum, and forcing the appearance of nearest neighbors, we have
\begin{align*}
   & \E\left[ \left| n^{-\half} R_n^{(n)}(f)
                  - \E n^{-\half} R_n^{(n)}(f) \right|^2 \right] \\
\leq & \ \frac{1}{n} \Oc( \|f\|_\infty^2 )
     + 
     \frac{1}{n}
     \Var\left[ \sum_{i=1}^n \left( \varphi(X_{i}) - \varphi(X_{N_n(i)}) \right)^2 \right] \\
= & \ \frac{1}{n} \Oc( \|f\|_\infty^2 )
     + 
     \frac{1}{n}
     \sum_{i,j=1}^n  \Cov\left(
      \left( \varphi(X_{i}) - \varphi(X_{N_n(i)}) \right)^2 ,
      \left( \varphi(X_{j}) - \varphi(X_{N_n(j)}) \right)^2
\right) \\
= & \ \frac{1}{n} \Oc( \|f\|_\infty^2 )
     + 
     \frac{1}{n} n \Var\left(
      \left( \varphi(X_{1}) - \varphi(X_{N_n(1)}) \right)^2 \right) \\
  & \quad 
     + \frac{1}{n} n(n-1)
       \Cov\left(
      \left( \varphi(X_{1}) - \varphi(X_{N_n(1)}) \right)^2 ,
      \left( \varphi(X_{2}) - \varphi(X_{N_n(2)}) \right)^2
\right) \\
= & \ \frac{1}{n} \Oc( \|f\|_\infty^2 )
     + 
     \Var\left(
      h(X_{1}, X_{N_n(1)}) \right) \\
  & \quad 
     + (n-1)
       \Cov\left(
      \left( \varphi(X_{1}) - \varphi(X_{N_n(1)}) \right)^2 ,
      \left( \varphi(X_{2}) - \varphi(X_{N_n(2)}) \right)^2
\right) \ .
\end{align*}
The first term clearly goes to zero, the second as well by Chatterjee's estimates, while remaining bounded. As such
\begin{align*}
   & \limsup_{n \rightarrow \infty} \E\left[ \left| n^{-\half} R_n^{(n)}(f)
                  - \E n^{-\half} R_n^{(n)}(f) \right|^2 \right] \\
\leq & \ \limsup_{n \rightarrow \infty}
       (n-1)
       \Cov\left(
      \left( \varphi(X_{1}) - \varphi(X_{N_n(1)}) \right)^2 ,
      \left( \varphi(X_{2}) - \varphi(X_{N_n(2)}) \right)^2
\right) \ .
\end{align*}
The rest of the proof is focused on proving that, while staying bounded, we have
\begin{align}
   \label{eq:remainder_to_prove}
   \limsup_{n \rightarrow \infty}
       (n-1)
       \Cov\left(
      \left( \varphi(X_{1}) - \varphi(X_{N_n(1)}) \right)^2 ,
      \left( \varphi(X_{2}) - \varphi(X_{N_n(2)}) \right)^2
      \right)
    & \leq \ 0 \ .
\end{align}

\medskip

{\bf Reductions:} Without loss of generality, we can assume that the $X_i$'s are uniform on $[0,1]$, by replacing $\varphi$ by $\varphi \circ F_X^{\langle -1 \rangle}$. Furthermore, it is convenient to identify the unit interval $[0,1]$ to the circle of unit length $\R / \Z$. Given two points $(x,y) \in (\R / \Z)^2$, the arc $\wideparen{xy}$ goes from $x$ to $y$ counter-clockwise. Its length is also denoted $\wideparen{xy}$.

\medskip

{\bf Marginals:} $(X_{1}, X_{N_n(1)} )$ and $(X_{2}, X_{N_n(2)} )$ have the same known distribution. $X_1$ is uniform and $X_{N_n(1)}-X_1$ is a independent $Beta(1, n-1)$. Indeed, since the $X_i$'s are independent and conditionally to $X_1=x_1$, we have
\begin{align*}
    \P\left( X_{N_n(1)}-X_1 \geq y \ | \ X_1 = x_1 \right)
= & \ \P\left( \forall i=2, \dots, n, \ X_i \notin \wideparen{x_1 (x_1+y)} \right)\\
= & \ \P\left( X_2 \notin \wideparen{x_1 (x_1+y)} \right)^{n-1}\\
= & \ \left( 1 - y \right)^{n-1} \ .
\end{align*}

\medskip

{\bf Joint distribution :}
Let us now explicit the distribution of $\left( X_1, X_{N_n(1)}, X_2, X_{N_n(2)} \right)$. The two points $X_1$ and $X_2$ are independent and uniform. Conditionally on $X_1$, $X_2$ being fixed, the remaining $n-2$ points can fall either in the arc $\wideparen{X_1 X_2}$ or in the arc $\wideparen{X_2 X_1}$. The number of points in each arc are written $k_{12}+k_{21}=n-2$. The random variable $k_{12}$ is a binomial random variable $Bin(n-2, p=\wideparen{X_1 X_2})$.
Finally, given $k_{12}$, $k_{21}$, we have the following cases
\begin{itemize}
    \item If $(k_{12}, k_{21}) = (0, n-2)$, then $X_{N_n(1)} = X_2$ and $(X_{N_n(2)}-X_2)$ is $\wideparen{X_2 X_1} Beta(1, k_{21}=n-2)$. 
    \item If $(k_{12}, k_{21}) = (n-2, 0)$, then $X_{N_n(2)} = X_1$ and $(X_{N_n(1)}-X_1)$ is $\wideparen{X_1 X_2} Beta(1, k_{12}=n-2)$. This is the symmetric case.
    \item If $0<k_{12}<n-2$, then $(X_{N_n(1)}-X_1)$ is $\wideparen{X_1 X_2} Beta(1, k_{12})$ and $(X_{N_n(2)}-X_2)$ is $\wideparen{X_2 X_1} Beta(1, k_{21})$. The other cases can be included in this case with the natural convention that $Beta(1, k=0) = 1$. We adopt this convention in what follows.
\end{itemize}
Notice that in the first two cases, the law of $\left( X_1, X_{N_n(1)}, X_2, X_{N_n(2)} \right)$ is supported on a three dimensional subspace.

Hence 
\begin{align*}
  & \E \left[ f_1(X_{1}, X_{N_n(1)}) f_2(X_{2}, X_{N_n(2)} ) \right] \\
= & \iint_{(\R/\Z)^2} dx_1 dx_2
    \E\left[ f_1(x_1, X_{N_n(1)}-X_1+x_1) f_2(X_{2}, X_{N_n(2)}-X_2+x_2 ) \ | \ X_1=x_1, X_2=x_2 \right]\\
= & \iint_{(\R/\Z)^2} dx_1 dx_2 \ 
    \E_{k \sim Bin(n-2, \wideparen{x_1 x_2})} \ 
    \E\left[ f_1( x_1, x_1 + \wideparen{x_1 x_2} \beta_{k} )     \right] 
    \E\left[ f_2( x_2, x_2 + \wideparen{x_2 x_1} \beta_{n-2-k} ) \right] \ .
\end{align*}

\medskip

{\bf Explicit densities:} Let us make the densities explicit while distinguishing the singular part and the absolutely continuous part. On the one hand, we have
\begin{align*}
  & \E \left[ 
    \1_{\left\{ X_{1}=X_{N_n(2)} \textrm{ or } X_{2}=X_{N_n(1)} \right\}}
    f_1(X_{1}, X_{N_n(1)}) f_2(X_{2}, X_{N_n(2)} ) \right] \\
= & \iint_{(\R/\Z)^2} dx_1 dx_2 \ 
    \wideparen{x_2 x_1}^{n-2}
    f_1( x_1, x_1 + \wideparen{x_1 x_2} )
    \E\left[ f_2( x_2, x_2 + \wideparen{x_2 x_1} \beta_{n-2} ) \right] \\
  & + \iint_{(\R/\Z)^2} dx_1 dx_2 \ 
    \wideparen{x_1 x_2}^{n-2}
    \E\left[ f_1( x_1, x_1 + \wideparen{x_1 x_2} \beta_{n-2} )     \right] 
    f_2( x_2, x_2 + \wideparen{x_2 x_1} ) \\
= & \frac{1}{(n-1)} \iint_{(\R/\Z)^2} dx_1 dx_2 \ 
    (n-1) \wideparen{x_2 x_1}^{n-2}
    f_1( x_1, x_2 )
    \E\left[ f_2( x_2, x_2 + \wideparen{x_2 x_1} \beta_{n-2} ) \right] \\
  & + \frac{1}{(n-1)} \iint_{(\R/\Z)^2} dx_1 dx_2 \ 
    (n-1) \wideparen{x_1 x_2}^{n-2}
    \E\left[ f_1( x_1, x_1 + \wideparen{x_1 x_2} \beta_{n-2} )     \right] 
    f_2( x_2, x_1 ) \ .
\end{align*}

On the other hand, we have
\begin{align*}
  & \E \left[ 
    \1_{\left\{ X_{1} \neq X_{N_n(2)} \textrm{ and } X_{2} \neq X_{N_n(1)} \right\}}
    f_1(X_{1}, X_{N_n(1)}) f_2(X_{2}, X_{N_n(2)} ) \right] \\
= & \iint_{(\R/\Z)^2} dx_1 dx_2 \ 
    \E_{k \sim Bin(n-2, \wideparen{x_1 x_2})} \ \1_{ 0 < k < n-2} \ 
    \E\left[ f_1( x_1, x_1 + \wideparen{x_1 x_2} \beta_{k} )     \right] 
    \E\left[ f_2( x_2, x_2 + \wideparen{x_2 x_1} \beta_{n-2-k} ) \right] \\
= & \iint_{(\R/\Z)^2} dx_1 dx_2 \ \iint_{[0,1]^2} dy_1 dy_2 \  
    \sum_{k=1}^{n-3} \binom{n-2}{k} \wideparen{x_1 x_2}^{k} \wideparen{x_2 x_1}^{n-2-k} \ \\
  & \quad \times 
    k \left( 1-y_1 \right)^{k-1} f_1( x_1, x_1 + \wideparen{x_1 x_2} y_1 ) \ 
    (n-2-k) \left( 1-y_2 \right)^{n-2-k-1} f_2( x_2, x_2 + \wideparen{x_2 x_1} y_2 ) \\
= & \iint_{(\R/\Z)^2} dx_1 dx_2 \ \iint_{[0,1]^2} dy_1 dy_2 \  
    f_1( x_1, x_1 + \wideparen{x_1 x_2} y_1 ) \ 
    f_2( x_2, x_2 + \wideparen{x_2 x_1} y_2 ) \\
  & \quad \times 
    \frac{\partial}{\partial y_1} \frac{\partial}{\partial y_2}
    \left( 
    \sum_{k=1}^{n-3} \binom{n-2}{k}
    \left( \wideparen{x_1 x_2}(1-y_1) \right)^{k} \ 
    \left( \wideparen{x_2 x_1}(1-y_2) \right)^{n-2-k}
    \right)
    \\
= & \iint_{(\R/\Z)^2} dx_1 dx_2 \ \iint_{[0,1]^2} dy_1 dy_2 \  
    f_1( x_1, x_1 + \wideparen{x_1 x_2} y_1 ) \ 
    f_2( x_2, x_2 + \wideparen{x_2 x_1} y_2 ) \ 
    \frac{\partial}{\partial y_1} \frac{\partial}{\partial y_2}
    \left( 1 - \wideparen{x_1 x_2} y_1
             - \wideparen{x_2 x_1} y_2
    \right)^{n-2} \\
= & \iint_{(\R/\Z)^2} dx_1 dx_2 \ \iint_{[0,\wideparen{x_1 x_2}] \times [0,\wideparen{x_2 x_1}]} dy_1 dy_2 \  
    f_1( x_1, x_1 + y_1 ) \ 
    f_2( x_2, x_2 + y_2 ) \ 
    \frac{\partial}{\partial y_1} \frac{\partial}{\partial y_2}
    \left( 1 - y_1 - y_2 \right)^{n-2} \ .
\end{align*}

\medskip

{\bf A non-trivial identity:} From the computations of the marginal law, we have the non-trivial and interesting identity
\begin{align*}
  & \E \left[ h(X_{1}, X_{N_n(1)})  \right] \\
= & \iint dx_1 dx_2 \ 
    \E_{k \sim Bin(n-2, \wideparen{x_1 x_2})} \ 
    \E\left[ h( x_1, x_1 + \wideparen{x_1 x_2} \beta_{k} )     \right]  \\
= & \int dx_1 \
    \E\left[ h( x_1, x_1 + \beta_{n-1} )     \right] \ .
\end{align*}
The following computation serves as a reality check.
\begin{align*}
  & \E \left[ h(X_{1}, X_{N_n(1)})  \right] \\
= & \ \iint dx_1 dx_2 \ 
    \E_{k \sim Bin(n-2, \wideparen{x_1 x_2})} \ 
    \E\left[ h( x_1, x_1 + \wideparen{x_1 x_2} \beta_{k} )     \right]  \\
= & \ \iint dx_1 dx_2 \ 
    \wideparen{x_1 x_2}^{n-2}
    h( x_1, x_1 + \wideparen{x_1 x_2} )\\
  & + \iint dx_1 dx_2 \ 
    \int_{ [0,\wideparen{x_1 x_2}] } dy_1 \frac{\partial}{\partial y_1}\left( 1-y_1 \right)^{n-2}
    h( x_1, x_1 + y_1 )\\
= & \ \frac{1}{n-1} \E\left[ h( x_1, x_1 + \beta_{n-1} ) \right] \\
  & + \int dx_1 \int_{ [0,1] } dy_1 (1-y_1) \ 
     \frac{\partial}{\partial y_1}\left( 1-y_1 \right)^{n-2}
    h( x_1, x_1 + y_1 )\\
= & \ \frac{1}{n-1} \E\left[ h( X_1, X_1 + \beta_{n-1} ) \right] 
  + \frac{n-2}{n-1} \E\left[ h( X_1, X_1 + \beta_{n-1} ) \right] \\
= & \ \E\left[ h( X_1, X_1 + \beta_{n-1} ) \right] \ .
\end{align*}

\medskip

{\bf Computing the singular part:} Taking $f_1=f_2=h$, we have by symmetry
\begin{align*}
  & \left|
    \E \left[ 
    \1_{\left\{ X_{1}=X_{N_n(2)} \textrm{ or } X_{2}=X_{N_n(1)} \right\}}
    h(X_{1}, X_{N_n(1)}) h(X_{2}, X_{N_n(2)} ) \right] 
    \right| \\
= & \Big| \frac{2}{(n-1)} \iint_{(\R/\Z)^2} dx_1 dx_2 \ 
    (n-1) \wideparen{x_2 x_1}^{n-2}
    h( x_1, x_2 )
    \E\left[ h( x_2, x_2 + \wideparen{x_2 x_1} \beta_{n-2} ) \right] 
    \Big| \\
\leq & \|h \|_\infty \Big| \frac{2}{(n-1)} \iint_{(\R/\Z)^2} dx_1 dx_2 \ 
    (n-1) \wideparen{x_2 x_1}^{n-2}
    h( x_1, x_2 )
    \Big| \\
= & \|h \|_\infty \Big| \frac{2}{(n-1)} \E h(X_1, X_{N_n(1)})
    \Big| \ .
\end{align*}
Hence, using Eq. \eqref{eq:CV_h}, we find 
$$
    \lim_{n \rightarrow \infty}
    (n-1) \left|
    \E \left[ 
    \1_{\left\{ X_{1}=X_{N_n(2)} \textrm{ or } X_{2}=X_{N_n(1)} \right\}}
    h(X_{1}, X_{N_n(1)}) h(X_{2}, X_{N_n(2)} ) \right] 
    \right|
    = 0 \ .
$$

\medskip

{\bf Computing the continuous part:}
\begin{align*}
  & \E \left[ 
    \1_{\left\{ X_{1} \neq X_{N_n(1)} \textrm{ and } X_{2} \neq X_{N_n(2)} \right\}}
    h(X_{1}, X_{N_n(1)}) h(X_{2}, X_{N_n(2)} ) \right] 
    -
    \E\left[ h(X_{1}, X_{N_n(1)}) \right]
    \E\left[ h(X_{2}, X_{N_n(2)} ) \right] 
    \\
= & \iint_{(\R/\Z)^2} dx_1 dx_2 \ \iint_{[0,\wideparen{x_1 x_2}] \times [0,\wideparen{x_2 x_1}]} dy_1 dy_2 \  
    h( x_1, x_1 + y_1 ) \ 
    h( x_2, x_2 + y_2 ) \ 
    \frac{\partial}{\partial y_1} \frac{\partial}{\partial y_2}
    \left( 1 - y_1 - y_2 \right)^{n-2} \\
   & \ -
    \iint_{(\R/\Z)^2} dx_1 dx_2 \
    \E\left[ h(x_1, x_1 + \beta_{n-1}) \right]
    \E\left[ h(x_2, x_2 + \beta_{n-1} ) \right] \\
= & \iint_{(\R/\Z)^2} dx_1 dx_2 \ \iint_{\R_+^2} dy_1 dy_2 \  
    h( x_1, x_1 + y_1 ) \ 
    h( x_2, x_2 + y_2 ) \\
  & \times
    \left( \1_{ y_1 \leq \wideparen{x_1 x_2}, y_2 \leq \wideparen{x_2 x_1}}
    \frac{\partial}{\partial y_1} \frac{\partial}{\partial y_2}
    \left( 1 - y_1 - y_2 \right)^{n-2} 
    - \1_{ y_1 \leq 1, y_2 \leq 1}
    (n-1)^2
    \left( 1 - y_1 \right)^{n-2} 
    \left( 1 - y_2 \right)^{n-2} 
    \right) \ .
\end{align*}

Now, consider the basic inequality
\begin{align*}
  & \1_{ y_1 \leq \wideparen{x_1 x_2}, y_2 \leq \wideparen{x_2 x_1}}
    \frac{\partial}{\partial y_1} \frac{\partial}{\partial y_2}
    \left( 1 - y_1 - y_2 \right)^{n-2} \\
= & \ \1_{ y_1 \leq \wideparen{x_1 x_2}, y_2 \leq \wideparen{x_2 x_1}}
    (n-2) (n-3)
    \left( 1 - y_1 - y_2 \right)^{n-4} \\
\leq & \ \1_{ y_1 \leq 1, y_2 \leq 1}
    (n-2) (n-3)
    \left( 1 - y_1 - y_2 + y_1 y_2 \right)^{n-4} \\
\leq & \ \1_{ y_1 \leq 1, y_2 \leq 1}
    (n-1)^2
    \left( (1 - y_1)(1 - y_2) \right)^{n-4} \ .
\end{align*}

Continuing the previous computation
\begin{align*}
  & \E \left[ 
    \1_{\left\{ X_{1} \neq X_{N_n(1)} \textrm{ and } X_{2} \neq X_{N_n(2)} \right\}}
    h(X_{1}, X_{N_n(1)}) h(X_{2}, X_{N_n(2)} ) \right] 
    -
    \E\left[ h(X_{1}, X_{N_n(1)}) \right]
    \E\left[ h(X_{2}, X_{N_n(2)} ) \right] 
    \\
\leq & (n-1)^2 \iint_{(\R/\Z)^2} dx_1 dx_2 \ \iint_{[0,1]^2} dy_1 dy_2 \  
    h( x_1, x_1 + y_1 ) \ 
    h( x_2, x_2 + y_2 ) \\
  & \times
    \left( 
    \left( 1 - y_1 \right)^{n-4} 
    \left( 1 - y_2 \right)^{n-4} 
    - 
    \left( 1 - y_1 \right)^{n-2} 
    \left( 1 - y_2 \right)^{n-2} 
    \right) \\
\leq & \left( 1 + o(1) \right) (n-3)^2 \iint_{(\R/\Z)^2} dx_1 dx_2 \ \iint_{[0,1]^2} dy_1 dy_2 \  
    h( x_1, x_1 + y_1 ) \ 
    h( x_2, x_2 + y_2 ) \left( 1 - y_1 \right)^{n-4} 
    \left( 1 - y_2 \right)^{n-4} \\
  & \times
    \left( 
    1
    - 
    \left( 1 - y_1 \right)^{2} 
    \left( 1 - y_2 \right)^{2} 
    \right) \\
= & \left( 1 + o(1) \right) \E\left[ 
    h( X_1, X_1 + Y_1 ) \ 
    h( X_2, X_2 + Y_2 ) \
    \left( 
    1
    - 
    \left( 1 - Y_1 \right)^{2} 
    \left( 1 - Y_2 \right)^{2} 
    \right) 
    \right] \ ,    
\end{align*}
where $X_1, X_2, Y_1, Y_2$ are independent and $Y_i \eqlaw Beta(1,n-3)$. Because
\begin{align*}
    1
    - 
    \left( 1 - Y_1 \right)^{2} 
    \left( 1 - Y_2 \right)^{2} 
   = & \ 
    \left[  1
    - 
    \left( 1 - Y_1 \right) 
    \left( 1 - Y_2 \right)
    \right]
    \left[ 
    1
    + 
    \left( 1 - Y_1 \right) 
    \left( 1 - Y_2 \right)
    \right]\\
    = & \ 
    \left(  Y_1 + Y_2 - Y_1 Y_2
    \right)
    \left[ 
    1
    + 
    \left( 1 - Y_1 \right) 
    \left( 1 - Y_2 \right)
    \right]\\
    \leq & \ 
    2 \left(  Y_1 + Y_2 \right) \ .
\end{align*}

Continuing the previous computation further and invoking symmetry between the pairs $(X_i, Y_i)$, $i=1,2$, we have
\begin{align*}
  & \E \left[ 
    \1_{\left\{ X_{1} \neq X_{N_n(1)} \textrm{ and } X_{2} \neq X_{N_n(2)} \right\}}
    h(X_{1}, X_{N_n(1)}) h(X_{2}, X_{N_n(2)} ) \right] 
    -
    \E\left[ h(X_{1}, X_{N_n(1)}) \right]
    \E\left[ h(X_{2}, X_{N_n(2)} ) \right] 
    \\
\leq & \left( 1 + o(1) \right) 2 \E\left[ 
    h( X_1, X_1 + Y_1 ) \ 
    h( X_2, X_2 + Y_2 ) \
    \left( Y_1 + Y_2 \right) 
    \right] \\
= & \left( 1 + o(1) \right) 4 
    \E\left[ h( X_1, X_1 + Y_1 ) \ Y_1 \right] \ 
    \E\left[ h( X_2, X_2 + Y_2 ) \right] \\
\leq & \left( 1 + o(1) \right) 4 \|h\|_\infty
    \E\left[ \beta_{n-3} \right] \ 
    \E\left[ h( X_1, X_{N_{n-3}(1)} ) \right] \\
\leq & \left( 1 + o(1) \right) \frac{4 \|h\|_\infty}{n-2}
    \E\left[ h( X_1, X_{N_{n-3}(1)} ) \right] \ .
\end{align*}
Upon invoking Eq. \eqref{eq:CV_h}, we see that Eq. \eqref{eq:remainder_to_prove} holds and we are done.

\section{Proof of multivariate Sobol' fluctuations (Main Theorem \ref{thm:limit_sobol_multi})}
\label{section:proof_multivariate}

Here we only point out the changes, since most proofs carry over verbatim.
 We shall deal with the vector process $\widehat{\theta}_n \in V := \R^d \otimes \R^3$ of size $3 d$
\begin{align}
\label{eq:def_sobol_multi_theta}
\widehat{\theta}_n & := 
\begin{pmatrix}
\widehat{\theta}_n(f_1)\\
\widehat{\theta}_n(f_2)\\
\vdots\\
\widehat{\theta}_n(f_d)\\
\end{pmatrix}
\end{align}
where for each $\ell=1,\ldots,d$, $\widehat{\theta}_n(f_\ell) \in \R^3$ is given by
\begin{align}
  & \widehat{\theta}_n(f_\ell) 
 =  \frac{1}{n}\begin{pmatrix}
        Z_n(f_\ell) \\
        S_n(f_\ell) \\
        S_n(f_\ell^2)
        \end{pmatrix} \ \nonumber \\
& = \frac{1}{n}
    \begin{pmatrix}
    R_n^{(n)}(f_\ell) \\
    0 \\
    0
    \end{pmatrix}
    +
    \frac{1}{n}
    \begin{pmatrix}
    \sum_{i=1}^n \E_\varepsilon\big(f_\ell(X_{\sigma_n(i)}, \varepsilon)\big)^2 \\
    \sum_{i=1}^n \E_\varepsilon\big(f_\ell(X_{\sigma_n(i)}, \varepsilon)\big) \\
    \sum_{i=1}^n \E_\varepsilon\big(f_\ell^2(X_{\sigma_n(i)}, \varepsilon)\big)
    \end{pmatrix}
    +
    \frac{1}{n}
    \begin{pmatrix}
    M_n^{(n)}(f_\ell) \\
    \sum_{i=1}^n \left( f_\ell(X_{\sigma_n(i)}, \varepsilon_i) - \E_\varepsilon\big(f_\ell(X_{\sigma_n(i)}, \varepsilon)\big) \right) \\
    \sum_{i=1}^n \left( f_\ell^2(X_{\sigma_n(i)}, \varepsilon_i) - \E_\varepsilon\big(f_\ell^2(X_{\sigma_n(i)}, \varepsilon)\big) \right)
    \end{pmatrix} \  \nonumber\\
& = \frac{1}{n}
    \begin{pmatrix}
    R_n^{(n)}(f_\ell) \\
    0 \\
    0
    \end{pmatrix}
    +
    \frac{1}{n}
    \underbrace{
    \begin{pmatrix}
    \sum_{i=1}^n \E_\varepsilon\big(f_\ell(X_i, \varepsilon)\big)^2 \\
    \sum_{i=1}^n \E_\varepsilon\big(f_\ell(X_i, \varepsilon)\big) \\
    \sum_{i=1}^n \E_\varepsilon\big(f_\ell^2(X_i, \varepsilon)\big)
    \end{pmatrix}}_{\text{(1)}}
    +
    \frac{1}{n}
    \underbrace{
    \begin{pmatrix}
    M_n^{(n)}(f_\ell) \\
    \sum_{i=1}^n \left( f_\ell(X_{\sigma_n(i)}, \varepsilon_i) - \E_\varepsilon\big(f_\ell(X_{\sigma_n(i)}, \varepsilon)\big) \right) \\
    \sum_{i=1}^n \left( f_\ell^2(X_{\sigma_n(i)}, \varepsilon_i) - \E_\varepsilon\big(f_\ell^2(X_{\sigma_n(i)}, \varepsilon)\big) \right)
    \end{pmatrix}}_{\text{(2)}} \ , 
\end{align}
The remainder term $R_n(f_\ell)$ and martingale array $M_n^{(n)}(f_\ell)$ are obtained exactly as before, after applying the decomposition of Proposition \ref{proposition:double_doob}.
Here, each martingale array $M_n^{(n)}(f_\ell)$ is defined as follows for each fixed $\phi \in \Phi$. 
The basic martingale increments are defined for all \( i \in \{1, \dots, n\} \) by
\begin{align}
    \Delta \Mk_i^{(n)}(f_\ell) := f_\ell(X_{\sigma_n(i)}, \varepsilon_i) - \E_\varepsilon[ f_\ell(X_{\sigma_n(i)}, \varepsilon)] \ .
    \label{eq:martingale_increment_phi}
\end{align}
Then, we have for $j\geq 1$
\begin{align}
M_j^{(n)}(f_\ell) 
     & :=  \sum_{k=1}^{j} \left( f_\ell(X_{\sigma_n(k-1)}, \varepsilon_{k-1}) + \E_\varepsilon[ f_\ell(X_{\sigma_n(k+1)}, \varepsilon)] \right) \cdot \Delta \Mk_k^{(n)}(f_\ell) \ .
\label{eq:main_martingale_part_phi}
\end{align}
The remainder on the other hand is
\begin{align}
   R_j^{(n)}(f_\ell) 
:=  \Oc( \|f_\ell\|_\infty^2 )
-
\half
\sum_{i=1}^j \left( \E_\varepsilon[f_\ell(X_{\sigma_n(i-1)}, \varepsilon)] -
\E_\varepsilon[f_\ell(X_{\sigma_n(i)}, \varepsilon)] 
\right)^2
\ ,
\label{eq:remainder_phi}
\end{align}

The procedure is similar to the previous section, we just have to make explicit the covariance matrices which are in $V \otimes V \approx \R^{3d \times 3d}$. We denote these matrices by $\Sigma_{0}$ and $\Sigma_{1}$ in $V \otimes V$. 
The matrix $\Sigma_{0}$ has block form $\Sigma_{0} = \left( \Sigma_{0}( f_i,  f_j) \right)_{1 \leq i,j \leq d}$, where we recall the definition 
\begin{align}
\label{eq:cov_sigma0_multivariate}
   \Sigma_{0}( f_\ell, f_p)
:= & \ \Cov\left( 
\begin{pmatrix}
\E_\varepsilon[f_\ell(X,\varepsilon)]^2\\
\E_\varepsilon[f_\ell(X,\varepsilon)]\\
\E_\varepsilon[f_\ell^2(X,\varepsilon)]
\end{pmatrix} ,
\begin{pmatrix}
\E_\varepsilon[f_p(X,\varepsilon)]^2\\
\E_\varepsilon[f_p(X,\varepsilon)]\\
\E_\varepsilon[f_p^2(X,\varepsilon)]
\end{pmatrix}
\right) \ .
\end{align}
Notice that specializing to $d=1$ and $f_\ell = f_p = f$ recovers the expression in Eq. \eqref{eq:cov_sigma0_fg} and justifies the choice of notation.
For $\Sigma_{1}=\left( \Sigma_{1}( f_i, f_j) \right)_{1 \leq i,j \leq d},$ we recall the definitions 
$$
\Sigma_a(X,f_\ell,f_p) := \begin{pmatrix}
      2 \E_\varepsilon[ f_\ell(X, \varepsilon) ] \\
      1 \\
      1
      \end{pmatrix}
      \begin{pmatrix}
      2 \E_\varepsilon[ f_p(X, \varepsilon) ] \\
      1 \\
      1
      \end{pmatrix}^T
    + \Cov_\varepsilon[ f_\ell(X, \varepsilon), f_p(X, \varepsilon) ]
     \begin{pmatrix}
      1  \\
      0 \\
      0
      \end{pmatrix}
      \begin{pmatrix}
      1  \\
      0\\
      0
      \end{pmatrix}^T 
$$
and
$$\Sigma_b(X, f_\ell, f_p)=\Cov_\varepsilon\left( 
\begin{pmatrix}
f_\ell(X, \varepsilon)\\
f_\ell(X, \varepsilon)\\
f_\ell^2(X, \varepsilon)
\end{pmatrix} ,
\begin{pmatrix}
f_p(X, \varepsilon)\\
f_p(X, \varepsilon)\\
f_p^2(X, \varepsilon)
\end{pmatrix}
\right) \ .
$$
We finally define
$$\Sigma_1(f_\ell,f_p):=\E[\Sigma_a(X, f_\ell, f_p) \odot \Sigma_b(X, f_\ell, f_p)] \ .$$

\section{Proof of fluctuations for Cram\'er--von Mises (Main Theorem \ref{thm:limit_cvm})}
\label{section:proof_cvm}

\subsection{Setup}

{\bf Basic idea.} As defined previously in Eq. \eqref{eq:Tn_CVM}, recall
\begin{align*}
T_n^{\mathrm{CvM}} = \int_\mathbb{R} T_n(t) \, dF_n(t) 
= \frac{1}{n} \sum_{i=1}^{n} \int_\mathbb{R} \Upsilon_{\sigma_n(i)}(t) \cdot \Upsilon_{\sigma_n(i+1)}(t) \, dF_n(t) \ .
\end{align*}

Everything hinges on the following decomposition
\begin{align}
   & T_n^{\mathrm{CvM}} - T^{\mathrm{CvM}}
 = \int_\R T_n(t) dF_n(t) - \int_\R T(t) d F_Y(t) \nonumber \\
 = & \int_\R \left[ T_n(t) - T(t) \right] d F_Y(t)
   - \int_\R \left[ F_n(t) - F_Y(t) \right] dT(t)
   + \int_\R \left[ T_n(t) - T(t) \right] d( F_n(t) - F_Y(t) ) \ .
\label{eq:Tn_CVM_decomposition}
\end{align}
Indeed
\begin{align*}
   & \int_\R T_n(t) dF_n(t) - \int_\R T(t) d F_Y(t)  \\
 = & \int_\R T(t) dF_n(t)
   + \int_\R \left[ T_n(t) - T(t) \right] dF_n(t)
   - \int_\R T(t) d F_Y(t)  \\
 = & \int_\R T(t) d( F_n(t) - F_Y(t) )
   + \int_\R \left[ T_n(t) - T(t) \right] dF_n(t)\\
 = & \int_\R T(t) d( F_n(t) - F_Y(t) )
   + \int_\R \left[ T_n(t) - T(t) \right] d F_Y(t)
   + \int_\R \left[ T_n(t) - T(t) \right] d( F_n(t) - F_Y(t) )\\
 = & - \int_\R \left[ F_n(t) - F_Y(t) \right] dT(t)
   + \int_\R \left[ T_n(t) - T(t) \right] d F_Y(t)
   + \int_\R \left[ T_n(t) - T(t) \right] d( F_n(t) - F_Y(t) ) \ ,
\end{align*}
where on the last step, we have performed an integration by parts. Notice that this expression can also be understood as a Taylor expansion of the functional $(F,T) \mapsto \int T \, dF $. Basically, we are applying a functional delta method by hand, and controlling the error terms. 

The basic idea is to prove functional fluctuation theorems for $\left( \sqrt{n}\begin{pmatrix} T_n(t) - T(t) \\ F_n(t) - F_Y(t) \end{pmatrix} \ ; \ t \in \R \right)$ and applying the continuous mapping theorem to Eq. \eqref{eq:Tn_CVM_decomposition}, while controlling various error terms. The fluctuations in law are only obtained after an identification in law.

We consider such a pedestrian approach preferable because the functional analytic setting is not directly applicable in our setup. Indeed, while $F_n$ has limiting fluctuations in the Skorohod space $D$, its differential $dF_n$ does not converge in law in a topology amenable to directly applying the delta method to $(F,T) \mapsto \int T \, dF $.

\medskip

{\bf Key identification in law.} As done in Eq.~\eqref{eq:vector_eqlaw}, we have the following identity in law between processes
\begin{align}
\label{eq:main_equality_in_law}
\left( 
\begin{pmatrix}
  T_n(t) \\ F_n(t)
\end{pmatrix}
\ ; \ t \in \R \right)
& \stackrel{\Lc}{=}
\left( 
\begin{pmatrix}
  Z_n(t) + \Oc( \frac1n ) \\ \widetilde{F}_n(t)
\end{pmatrix}
\ ; \ t \in \R \right) \ ,
\end{align}
where
$$
  Z_n(t) := \frac{1}{n} \sum_{i=1}^{n} 
  \mathds{1}_{ \{ f(X_{\sigma_n(i-1)}, \varepsilon_{i-1}) \leq t \} }
  \mathds{1}_{ \{ f(X_{\sigma_n(i)}, \varepsilon_{i}) \leq t \} }
  \ ,
$$
$$
  \widetilde{F}_n(t) := \frac{1}{n} \sum_{i=1}^{n} 
  \mathds{1}_{ \{ f(X_{\sigma_n(i)}, \varepsilon_{i}) \leq t \} }
  \ .
$$
In the above expression, we adopt the convention that $\sigma_n$ is cyclic and $\varepsilon_0=0$, as usual. At this stage, note that unlike in the case of Sobol indices, we do not deal with an (analogue of) empirical variance. Indeed in the Cramér–von Mises framework considered here, the denominator in Eq. \eqref{eq:cvm_simplified} is constant and does not need to be estimated.

\medskip

{\bf Decompositions of the processes $(Z_n, \widetilde{F}_n)$.} Recall from Eq.~\eqref{eq:Filtration} the filtration  $\mathbb{F} = \left( \Fc_i \ ; \ i \in \N \right)$ defined by
\begin{align*}
    \mathcal{F}_i = 
    \sigma \left(X_k \ ; \ k \in \mathbb{N}^*\right)
    \vee \sigma\left( \varepsilon_1, \dots, \varepsilon_i \right), \quad \text{for } i \geq 0 \ .
\end{align*}
We apply the same ideas as in the univariate Sobol' case of Subsection \ref{subsection:univariate_sobol}. In that fashion, we have the following decompositions of processes
\begin{align}
n Z_n(t) & = A_n^{(n)}(t) + M^{(n)}_n(t) \ , \\
n \widetilde{F}_n(t) & = \sum_{i=1}^{n} \E_\varepsilon\left[ \mathds{1}_{\{ f(X_{\sigma_n(i)}, \varepsilon)\leq t \}}\right]
 +  \sum_{i=1}^{n} \left[ \mathds{1}_{\{ f(X_{\sigma_n(i)}, \varepsilon)\leq t \}} - \E_\varepsilon\left[ \mathds{1}_{\{ f(X_{\sigma_n(i)}, \varepsilon)\leq t \}}\right]
 \right]\ ,
\end{align}
where $A_n^{(n)}$ is an $\Fc_0$-measurable process,  $M_n^{(n)}$ is an $\F$-martingale taking values in the Skorohod space $D$. We insist that the processes in the variable $t$ are not themselves martingales. We are rather in the setup of Banach-valued discrete-time martingales and we shall refrain from invoking generic tools such as \cite{pisier2016martingales}.

Pushing further, after making these terms more explicit following the scalar case of Proposition \ref{proposition:double_doob}, we have
\begin{align}
  \label{eq:def_theta_nt}
  \widehat{\theta}_n(t) & :=
  \begin{pmatrix}
    Z_n(t) + \Oc(\frac{1}{n})\\
    \widetilde{F}_n(t)
  \end{pmatrix}\\
  = & \
  \frac{1}{n}
  \begin{pmatrix}
    R_n^{(n)}(t) \\ 0
  \end{pmatrix}
  +
  \frac{1}{n}
  \begin{pmatrix}
    \sum_{i=1}^{n} \E_\varepsilon\left[ \mathds{1}_{\{ f(X_{\sigma_n(i)}, \varepsilon)\leq t \}}\right]^2 \\
    \sum_{i=1}^{n} \E_\varepsilon\left[ \mathds{1}_{\{ f(X_{\sigma_n(i)}, \varepsilon)\leq t \}}\right]
  \end{pmatrix}
  +
  \frac{1}{n}
  \begin{pmatrix}
    M_n^{(n)}(t) \\
    \sum_{i=1}^{n} \left[ \mathds{1}_{\{ f(X_{\sigma_n(i)}, \varepsilon)\leq t \}} - \E_\varepsilon\left[ \mathds{1}_{\{ f(X_{\sigma_n(i)}, \varepsilon)\leq t \}}\right]
    \right]
  \end{pmatrix} \nonumber \\
  = & \
  \frac{1}{n}
  \begin{pmatrix}
    R_n^{(n)}(t) \\ 0
  \end{pmatrix}
  +
  \frac{1}{n}
  \underbrace{
  \begin{pmatrix}
    \sum_{i=1}^{n} \varphi(t,X_i)^2 \\
    \sum_{i=1}^{n} \varphi(t,X_i)
  \end{pmatrix}}_{(1)=\Ac_n^{(n)}(t)}
  +
  \frac{1}{n}
  \underbrace{
  \begin{pmatrix}
    M_n^{(n)}(t) \\
    \sum_{i=1}^{n} \left[ \mathds{1}_{\{ f(X_{\sigma_n(i)}, \varepsilon_i)\leq t \}} - \E_\varepsilon\left[ \mathds{1}_{\{ f(X_{\sigma_n(i)}, \varepsilon)\leq t \}}
    \right]
    \right]
  \end{pmatrix}}_{(2)=\Mc_n^{(n)}(t)} \ \nonumber ,
\end{align}
where $M_n^{(n)}$ is the last term in a martingale array $\left( M_j^{(n)} \ ; \ 0 \leq j \leq n \right)$. The martingale array itself is defined as follows.

The basic martingale is 
$$
\Delta \Mk_i^{(n)}(t)
:=
\mathds{1}_{\{ f(X_{\sigma_n(i)}, \varepsilon_i)\leq t \}} 
- 
\E_\varepsilon\left[ \mathds{1}_{\{ f(X_{\sigma_n(i)}, \varepsilon)\leq t \}} \right] \ ,
$$
while our martingale array is
$$
M_j^{(n)}(t) := \sum_{k=1}^{j} \left( \mathds{1}_{\{ f(X_{\sigma_n(k-1)}, \varepsilon_{k-1})\leq t \}}
 + \E_\varepsilon\left[\mathds{1}_{\{ f(X_{\sigma_n(k+1)}, \varepsilon)\leq t \}}\right] \right)
              \Delta \Mk^{(n)}_k(t) 
\ .
$$

The residue term is given in terms of the regression function \eqref{eq:def_phi_tx} as
$$
R_n^{(n)}(t) = \ \Oc(1)
- \half \sum_{i=1}^{n} \left[ 
  \varphi(t, X_{\sigma_n(i-1)})
  -
  \varphi(t, X_{\sigma_n(i)})
  \right]^2
              \ ,
$$
where the $\Oc(1)$ is uniformly bounded in $t$.

\medskip

{\bf Fluctuation process $\left( \vartheta_n(t) \ ; \ t \in \R \right)$.} In particular
\begin{align*}
  \widehat{\theta}_n(t) - \E[\widehat{\theta}_n(t)]
  := & \
  \frac{1}{n}
  \begin{pmatrix}
    R_n^{(n)}(t) - \E[ R_n^{(n)}(t) ] \\ 0
  \end{pmatrix}
  +
  \frac{1}{n}
  \left( 
  \Ac_n^{(n)}(t)
  - 
  \E[\Ac_n^{(n)}(t)]
  \right)
  +
  \frac{1}{n}
  \Mc_n^{(n)}(t) \ ,
\end{align*}
with
$$
\E[\Ac_n^{(n)}(t)] = \begin{pmatrix} T(t) , F_Y(t) \end{pmatrix}^T \ ,
$$
$$
\frac{1}{n}\E[ R_n^{(n)}(t) ] = \ \Oc\left( \frac{1}{n} \right)
- \half \E\left[ 
  \left(
  \varphi(t, X_{1})
  -
  \varphi(t, X_{N_n(1)})
  \right)^2
  \right]\ .
$$

Because the residue $R_n^{(n)}(t) - \E[ R_n^{(n)}(t) ] $ is tricky to handle, let us define the fluctuation process as the $\R^2$-valued process
\begin{align}
  \label{eq:def_cvm_fluctuations}
  \vartheta_n(t) := \ &
  \frac{1}{\sqrt{n}}
  \left( \Ac_n^{(n)}(t) -  \E[\Ac_n^{(n)}(t)] \right)
  +
  \frac{1}{\sqrt{n}}
  \Mc_n^{(n)}(t) \ .
\end{align}

\subsection{Structure of the proof}

{\bf Step 1: Controlling the various error terms.}
Let $o_\P(1)$ denote a quantity converging to zero in probability.

First, we claim that, when considering the last term in Eq. \eqref{eq:Tn_CVM_decomposition}, we have
\begin{align}
\label{eq:Tn_cvm_error}
\sqrt{n} E_n := & \ \sqrt{n} \int_\R \left[ T_n(t) - T(t) \right] d( F_n(t) - F_Y(t) ) = o_\P(1) \ .
\end{align}
The proof is given in Subsection \ref{subsection:proof_cvm_error}. Second, we claim that
\begin{align}
\label{eq:Tn_cvm_error2}
\sqrt{n} E_n' := & \ \int_\R \left[ n^{-\half} R_n^{(n)}(t) - n^{-\half} \E[ R_n^{(n)}(t) ] \right] d F_Y(t) = o_\P(1) \ ,
\end{align}
which can be obtained by a first moment estimate combined with Lemma \ref{lemma:remainder_control}. Indeed, by the Cauchy-Schwarz inequality and then dominated convergence, we have
$$
\E[ |E_n'| ] \leq \int_\R \E \left| n^{-\half} R_n^{(n)}(t) - n^{-\half} \E[ R_n^{(n)}(t) | \right] d F_Y(t)
\leq  \int_\R \sqrt{ \Var\left[ n^{-\half} R_n^{(n)}(\1_{f(x, \varepsilon) \leq t}) \right] } d F_Y(t)
\xrightarrow[n \rightarrow \infty]{} 0 \ .
$$
The domination is satisfied because the $\Var\left[ n^{-\half} R_n^{(n)}(\1_{f(x, \varepsilon) \leq t}) \right]$ remains uniformly bounded in $t$, upon examining the proof of Lemma \ref{lemma:remainder_control}.

Now we start from Eq. \eqref{eq:Tn_CVM_decomposition} then apply Eq. \eqref{eq:Tn_cvm_error} and Eq. \eqref{eq:Tn_cvm_error2}, with the fact that an $o_\P(1)$ remains an $o_\P(1)$ after an equality in law. This yields
\begin{align*}
   & \sqrt{n} \left( T_n^{\mathrm{CvM}} - T^{\mathrm{CvM}} \right) \\
 = & \ \sqrt{n} \int_\R \left[ T_n(t) - T(t) \right] d F_Y(t)
   - \sqrt{n} \int_\R \left[ F_n(t) - F_Y(t) \right] dT(t)
   + \sqrt{n} E_n \\
 \eqlaw & \ \sqrt{n} \int_\R \left[ \langle \widehat{\theta}_n(t), e_1 \rangle - T(t) \right] d F_Y(t)
   - \sqrt{n} \int_\R \left[ \langle \widehat{\theta}_n(t), e_2 \rangle  - F_Y(t) \right] dT(t)
   + o_\P(1) \\
 = & \int_\R \langle \vartheta_n(t), e_1 \rangle  d F_Y(t)
   - \int_\R \langle \vartheta_n(t), e_2 \rangle  dT(t)
   + \int_\R n^{-\half} R_n^{(n)}(t) d F_Y(t)
   + o_\P(1) \\
 \stackrel{Eq. \eqref{eq:Tn_cvm_error2}}{=}
   & \int_\R \langle \vartheta_n(t), e_1 \rangle  d F_Y(t)
   - \int_\R \langle \vartheta_n(t), e_2 \rangle  dT(t)
   + \int_\R n^{-\half} \E[R_n^{(n)}(t)] d F_Y(t)
   + \sqrt{n} E_n'
   + o_\P(1) \\
 \stackrel{Eq. \eqref{eq:def_cvm_Delta}}{=} 
   & \int_\R \langle \vartheta_n(t), e_1 \rangle  d F_Y(t)
   - \int_\R \langle \vartheta_n(t), e_2 \rangle  dT(t)
   - \half \Delta_n
   + o_\P(1) \ .
\end{align*}
As such, by Slutsky's Lemma, it suffices to prove
\begin{align*}
   \int_\R \langle \vartheta_n(t), e_1 \rangle  d F_Y(t)
   - \int_\R \langle \vartheta_n(t), e_2 \rangle  dT(t)
\xrightarrow[\Lc]{n \rightarrow \infty}  & \ \Nc( 0, \frac{1}{36} \sigma^2_{\mathrm{CvM}} ) 
\ .
\end{align*}
The factor $\frac{1}{36}$ is here only to account for the linear transformation Eq. \eqref{eq:cvm_simplified} linking $\rho^{\mathrm{CvM}}$ and $T^{\mathrm{CvM}}$.

\medskip

{\bf Step 2:} We prove a bivariate fluctuation theorem
$$
    \left( \vartheta_n(t) \ ; \ t \geq 0 \right)
    \stackrel{\Lc}{\longrightarrow}
    \left( \Xc, \Yc \right) \ , 
$$
where $(\Xc,\Yc)$ is a Gaussian process. Of course, by virtue of Glivenko-Cantelli and its fluctuations known as the Donsker Theorem for empirical processes, $\Yc$ is nothing but the usual Brownian bridge. 

\smallskip

{\it Step 1.1: Tightness.}
In order to prove that $\left( \vartheta_n \ ; \ n \geq 1 \right)$ is tight,  recall that tightness for a tuple of processes is equivalent to tightness of each process, because a product of compact spaces is compact. The second coordinate has the same distribution as $\sqrt{n}(F_n-F)$, which is tight in the space $D$, by virtue of Donsker's theorem for empirical processes. 

Also, instead of considering each coordinate, we can consider the decomposition of $\vartheta_n$ into martingale and $\Fc_0$-measurable part in Eq. \eqref{eq:def_cvm_fluctuations}. Notice $t \mapsto \frac{1}{\sqrt{n}}
  \left( \Ac_n^{(n)}(t) -  \E[\Ac_n^{(n)}(t)] \right)$ is an empirical process obtained by i.i.d. sums. In this classical setting treated for example in \cite[Chapter 19]{Vaart_1998}, tightness is a given if $\left( \varphi(t,\cdot)^2 \ ; \ t \in \R \right)$ and $\left( \varphi(t,\cdot) \ ; \ t \in \R \right)$ are included in a Donsker class. This is confirmed for monotone functions in \cite[Example 19.11]{Vaart_1998}. 
  
In the end, we only need to prove tightness for $\left( M_n^{(n)} \ ; \ n \in \N \right)$, the first coordinate of the martingale component. This is technical and done in Section \ref{section:chaining}.

\smallskip

{\it Step 1.2: Finite dimensional distributions.}
We fix a family of times $t_1 \leq \ldots \leq t_d$ and we consider the finite dimensional vector 
$$ \left( \vartheta_n(t_1),\ldots, \vartheta_n(t_d) \right)^\top \ .$$
It has dimension $2 d$. We proceed exactly as in the proof of Gaussian fluctuations for the multivariate Sobol' index (Theorem \ref{thm:limit_sobol_multi}), or rather when aiming for the Gaussian fluctuations of \eqref{eq:def_sobol_multi_theta}. Only in this case, we only need a $2d$ vector instead of a $3d$ vector and we pick functions
$$
\phi_i = \mathds{1}_{(-\infty,t_i]} \ , \qquad i=1,\ldots,d \ .
$$
In the end, the covariance matrix of $(\Xc_t,\Yc_t)$ is denoted $C$ given by
$$ C(t,s) = \begin{pmatrix}
C_{\Xc,\Xc}(t,s) &
C_{\Xc,\Yc}(t,s) \\
C_{\Yc,\Xc}(t,s) &
C_{\Yc,\Yc}(t,s)
\end{pmatrix} \ .
$$
The detailed computations leading to the expressions in Theorem \ref{thm:limit_cvm} are in Subsection \ref{subsection:asymptotic_variance}.

\medskip

{\bf Step 3:} Because integrating against (compactly supported) measures is a continuous mapping, we can invoke the mapping theorem. This yields that

$$ \sqrt{n}
   \left( 
     \int_\R \langle \vartheta_n(t), e_1 \rangle  d F_Y(t)
   - \int_\R \langle \vartheta_n(t), e_2 \rangle  dT(t)
   \right)
   \stackrel{\Lc}{\longrightarrow}
     \int_\R \Xc_t d F_Y(t) 
   - \int_\R \Yc_t dT(t)
   \ ,
$$
which is a Gaussian random variable. Its variance is entirely determined from the covariance structure of $(\Xc,\Yc)$. More precisely, we obtain
\begin{align*}
  & \frac{1}{36} \sigma^2_{\mathrm{CvM}} \\
= & \ \Var\left[ \int_\R \Xc_t d F_Y(t) - \int_\R \Yc_t dT(t) \right] \\
= & \  \E\left[\left( \int_\R \Xc_t d F_Y(t) \right)^2 \right]
   +   \E\left[\left( \int_\R \Yc_t d T  (t) \right)^2 \right]
   - 2 \Cov\left(\int_\R \Yc_t dT(t),\int_\R \Xc_t d F_Y(t)\right)\\
= & \ \int_\R \int_\R \Cov( \Xc_t, \Xc_s) dF_Y(t) dF_Y(s)
    + \int_\R \int_\R \Cov( \Yc_t, \Yc_s) dT(t) dT(s)
    -2\int_\R \int_\R \Cov( \Xc_t, \Yc_s) dF_Y(t) dT(s) \\
= & \ \int_\R \int_\R C_{\Xc, \Xc} dF_Y(t) dF_Y(s)
    + \int_\R \int_\R C_{\Yc, \Yc} dT(t) dT(s)
    -2\int_\R \int_\R C_{\Xc, \Yc} dF_Y(t) dT(s) \ .
\end{align*}
This is the announced result.

\subsection{Asymptotic covariance}
\label{subsection:asymptotic_variance}

We denote by $C_{0}$ the contribution of the $\mathcal F_0$-measurable part and by $C_{1}$ the contribution of the martingale part. The expressions of $C_0$ and $C_1$ can be directly extracted from those of $\Sigma_0$ and $\Sigma_1$ by taking
\[
\phi_i=\mathds{1}_{(-\infty,t_i]}, \qquad i=1,\ldots,d .
\]

The matrix $C_{0}$ has block form
\[
C_{0}=\bigl(C_{0}(t_i,t_j)\bigr)_{1\le i,j\le d}.
\]
From expression~\eqref{eq:cov_sigma0_multivariate}, we obtain, for $t,s\in\mathbb R$,
\begin{align*}
C_{0}(t,s)
&:= \Cov\left(
\begin{pmatrix}
\E_\varepsilon[\mathds{1}_{\{f(X,\varepsilon)\le t\}}]^2\\
\E_\varepsilon[\mathds{1}_{\{f(X,\varepsilon)\le t\}}]
\end{pmatrix},
\begin{pmatrix}
\E_\varepsilon[\mathds{1}_{\{f(X,\varepsilon)\le s\}}]^2\\
\E_\varepsilon[\mathds{1}_{\{f(X,\varepsilon)\le s\}}]
\end{pmatrix}
\right)\\
&= \Cov\left(
\begin{pmatrix}
F_{Y|X}(t)^2\\
F_{Y|X}(t)
\end{pmatrix},
\begin{pmatrix}
F_{Y|X}(s)^2\\
F_{Y|X}(s)
\end{pmatrix}
\right)\\
&=
\begin{pmatrix}
\Cov\!\bigl(F_{Y|X}(t)^2,F_{Y|X}(s)^2\bigr)
& \Cov\!\bigl(F_{Y|X}(t)^2,F_{Y|X}(s)\bigr)\\
\Cov\!\bigl(F_{Y|X}(s)^2,F_{Y|X}(t)\bigr)
& \Cov\!\bigl(F_{Y|X}(t),F_{Y|X}(s)\bigr)
\end{pmatrix}.
\end{align*}

The matrix $C_{1}$ also has block form
\[
C_{1}=\bigl(C_{1}(t_i,t_j)\bigr)_{1\le i,j\le d},
\qquad
C_{1}(t,s):=
\begin{pmatrix}
c_{11}(t,s) & c_{12}(t,s)\\
c_{21}(t,s) & c_{22}(t,s)
\end{pmatrix}.
\]
To derive the expressions of the coefficients $c_{ij}$, we can follow the ideas developed for the multivariate output for Sobol indices. This time we introduce
$$
C_a(X,t,s) := \begin{pmatrix}
      2 \E_\varepsilon[ \mathds{1}_{f(X, \varepsilon)\leq t} ] \\
      1 
      \end{pmatrix}
      \begin{pmatrix}
      2 \E_\varepsilon[ \mathds{1}_{f(X, \varepsilon)\leq s} ] \\
      1 
      \end{pmatrix}^T
    + \Cov_\varepsilon[ \mathds{1}_{f(X, \varepsilon)\leq t}, \mathds{1}_{f(X, \varepsilon)\leq s} ]
     \begin{pmatrix}
      1  \\
      0 
      \end{pmatrix}
      \begin{pmatrix}
      1  \\
      0
      \end{pmatrix}^T 
$$
and
$$C_b(X, t, s)=\Cov_\varepsilon\left( 
\begin{pmatrix}
\mathds{1}_{f(X, \varepsilon)\leq t}\\
\mathds{1}_{f(X, \varepsilon)\leq t}
\end{pmatrix} ,
\begin{pmatrix}
\mathds{1}_{f(X, \varepsilon)\leq s}\\
\mathds{1}_{f(X, \varepsilon)\leq s}
\end{pmatrix}
\right) \ . 
$$
A direct computation gives 
\begin{align}
\Cov_\varepsilon\!\bigl(\mathds{1}_{f(X,\varepsilon)\le t},\,\mathds{1}_{f(X,\varepsilon)\le s}\bigr)
&= \E_\varepsilon\!\bigl(\mathds{1}_{f(X,\varepsilon)\le t}\,\mathds{1}_{f(X,\varepsilon)\le s}\bigr) - \E_\varepsilon\!\bigl(\mathds{1}_{f(X,\varepsilon)\le t}\bigr)\,
        \E_\varepsilon\!\bigl(\mathds{1}_{f(X,\varepsilon)\le s}\bigr) \nonumber\\
&= \Cov\!\bigl(\mathds{1}_{f(X,\varepsilon)\le t},\,\mathds{1}_{f(X,\varepsilon)\le s}\mid X\bigr)\nonumber\\
&= F_{Y|X}(t\wedge s)-F_{Y|X}(t)F_{Y|X}(s)\ .
\end{align}

We then define
$$C_1(t,s):=\E[C_a(X, t, s)\odot C_b(X, t, s)]\ .$$
We use the elementary identities, valid for
$\phi=\mathds{1}_{(-\infty,t]}$ and $\psi=\mathds{1}_{(-\infty,s]}$,
\[
\phi^2=\phi,
\qquad
\phi\psi=\mathds{1}_{(-\infty,t\wedge s]}.
\]
We then obtain
\begin{align*}
c_{11}(t,s)
&=\E\Bigl[
  \bigl(
    F_{Y|X}(t\wedge s)
    +3\,F_{Y|X}(t)\,F_{Y|X}(s)
  \bigr) \times
  \bigl(
    F_{Y|X}(t\wedge s)
    -F_{Y|X}(t)\,F_{Y|X}(s)
  \bigr)
\Bigr],\\[0.5em]
c_{12}(t,s)
&=\E\Bigl[
  2\,F_{Y|X}(t) \times
  \bigl(
    F_{Y|X}(t\wedge s)
    -F_{Y|X}(t)\,F_{Y|X}(s)
  \bigr)
\Bigr],\\[0.5em]
c_{21}(t,s)
&=\E\Bigl[
  2\,F_{Y|X}(s) \times
  \bigl(
    F_{Y|X}(t\wedge s)
    -F_{Y|X}(s)\,F_{Y|X}(t)
  \bigr)
\Bigr],\\[0.5em]
c_{22}(t,s)
&=\E\Bigl[
  F_{Y|X}(t\wedge s)
  -F_{Y|X}(t)\,F_{Y|X}(s)
\Bigr].
\end{align*}

The covariance matrix of $(\Xc_t,\Yc_t)$ is denoted $C$ given by
$$ C(t,s)=C_0(t,s)+C_1(t,s)=\begin{pmatrix}
C_{\Xc,\Xc}(t,s) &
C_{\Xc,\Yc}(t,s) \\
C_{\Yc,\Xc}(t,s) &
C_{\Yc,\Yc}(t,s)
\end{pmatrix} \ .
$$
\begin{align*}
C_{\Xc,\Xc}(t,s)
=&\ \E\!\big[F_{Y|X}(t)^2F_{Y|X}(s)^2\big]
-\E\!\big[F_{Y|X}(t)^2\big]\E\!\big[F_{Y|X}(s)^2\big]\\&+\E\Bigl[
  \bigl(
    F_{Y|X}(t\wedge s)
    +3\,F_{Y|X}(t)\,F_{Y|X}(s)
  \bigr) \times
  \bigl(
    F_{Y|X}(t\wedge s)
    -F_{Y|X}(t)\,F_{Y|X}(s)
  \bigr)
\Bigr]\\
C_{\Yc,\Xc}(t,s)
=&\ \E\!\big[F_{Y|X}(t)^2F_{Y|X}(s)\big]
-\E\!\big[F_{Y|X}(t)^2\big]\E\!\big[F_{Y|X}(s)\big]\\&+\E\Bigl[
  2\,F_{Y|X}(t) \times
  \bigl(
    F_{Y|X}(t\wedge s)
    -F_{Y|X}(t)\,F_{Y|X}(s)
  \bigr)
\Bigr]\\
C_{\Xc,\Yc}(t,s)
=&\ \E\!\big[F_{Y|X}(s)^2F_{Y|X}(t)\big]
-\E\!\big[F_{Y|X}(s)^2\big]\E\!\big[F_{Y|X}(t)\big]\\
&+\E\Bigl[
  2\,F_{Y|X}(s) \times
  \bigl(
    F_{Y|X}(t\wedge s)
    -F_{Y|X}(s)\,F_{Y|X}(t)
  \bigr)
\Bigr]\\
C_{\Yc,\Yc}(t,s)
=&\ F_Y(t\wedge s)-F_Y(t)F_Y(s) \ .
\end{align*}

\subsection{Tightness via chaining argument}
\label{section:chaining}

For simpler notation, we write
$$
M_j^{(n)}(s,t)
:= M_j^{(n)}(t) - M_j^{(n)}(s) \ .
$$
In order to prove tightness, we will invoke the basic criterion

\begin{align}
\label{eq:tightness_criterion}
\forall \varepsilon > 0, \ 
\lim_{\delta \rightarrow 0} \limsup_{n \rightarrow \infty} 
\P\left( \sup_{|t-s|\leq \delta} | M_n(t,s) | / \sqrt{n} \geq \varepsilon \right) 
= 0 \ .
\end{align}
This criterion can be found as \cite[Theorem 13.2]{billingsley2013convergence}. Notice that it is sufficient for our needs to control the usual uniform modulus of continuity, instead of the modulus tailored for the Skorohod space. In particular, the limiting process is continuous. Furthermore, there is no need to prove tightness for a dense subset of values (Condition (i') in the Corollary \cite[Theorem 13.2]{billingsley2013convergence}), as this is consequence of convergence of finite-dimensional distributions.

Without loss of generality, we can suppose
\begin{itemize}
    \item $(X,Y)$ have uniform marginals. Thus, we need to control things on $[0,1]$.
    \item $\delta = 2^{-\ell}$.
\end{itemize}
Some notations are also needed.
\begin{itemize}
    \item Let $T_N$ be the dyadics of level $N$. 
    \item For $t \in [0,1]$, $\pi_N(t) = \lfloor 2^N t \rfloor 2^{-N}$ is the dyadic projection to level $N$.
    Clearly $0 \leq t-\pi_N(t) < 2^{-N}$ and
    $$
    \pi_{N+1}(t)-\pi_N(t) \in 2^{-(N+1)} \{0,1\} \ .
    $$
\end{itemize}
Now, let us perform the computation \eqref{eq:tightness_criterion} in 5 steps. 

\medskip

{\bf Step 1: Dyadic chaining.}
Because on an interval of length $2^{-\ell}$, there is necessarily an element $t_0 \in T_\ell$, we have
$$
\sup_{|t-s| \leq \delta} | M_n(t,s) |
\leq 2 \sup_{t_0 \in T_\ell} \sup_{|t-t_0| \leq 2^{-\ell}} | M_n(t,t_0) | \ .
$$
Then, because $M_n$ is càdlàg, we can write
$$
M_n(t) = M_n( \pi_{\ell+1}(t) ) + \sum_{k=\ell+1}^\infty \left[ M_n( \pi_{k+1}(t) )-M_n( \pi_k(t) ) \right] \ .
$$
\begin{remark}[Why $M_n$ is càdlàg, and argument's details]
$M_n$ is càdlàg as a consequence of the fact that $t \mapsto \E_\varepsilon\left( \1_{\{  f(X, \varepsilon) \leq t \}} | X = x \right)$ is càdlàg. In order to prove that, notice that $t \mapsto \E_\varepsilon\left( \1_{\{  f(X, \varepsilon) \leq t \}} | X = x \right)$ is increasing, thus having left and right limits at every fixed $t_0 \in \R$. Then, the right limit at $t_0$ is actually the value at $t_0$ by dominated convergence.

Now, $M_n$ has enough regularity to be determined by its values on dyadics: the above formula holds for dyadics and extends by continuous right limits.
\end{remark}
In fact, given a centered Gaussian process $G$, such a dyadic chaining allows to control increments $G_t-G_s$, whose tails have (by construction) uniform Gaussian tails at all scales in the sense that
$$
\forall u>0, \ 
\P\left( |G_t-G_s| \geq u \right) \leq 2 \exp\left( -\frac{u^2}{2 d(s,t)^2} \right) \ ,
$$
with $d(t,s) = \Var(G_t-G_s)^\half$. We refer to the introduction of \cite[Chapter 1, Overview and Basic Facts]{talagrand2005generic}. 
In our case, the Gaussian fluctuations of $M_n(t)-M_n(s)$ are due to martingale self-averaging and cannot hold at all scales. Because we are dealing with jump processes, we need to distinguish the $|t-s| \approx \frac{1}{n}$ scale of discontinuities, and the Gaussian fluctuation scale $|t-s| = \Oc(1) \gg \frac1n$. As such, let $L_n \rightarrow \infty$ be a sequence which we will make explicit later, and which gives the transition between macroscopic and microscopic scale.

\begin{assumption}[On the sequence $L_n$]
\label{assumption:on_Ln}
We need:
\begin{enumerate}
    \item $L_n \rightarrow \infty$.
    \item $L_n - \log_2 n \rightarrow -\infty$ or equivalently $2^{L_n} = o(n)$.
\end{enumerate}
\end{assumption}

Write
$$
M_n(t) = M_n( \pi_{\ell+1}(t) ) 
       + \sum_{k=\ell+1}^{\ell+L_n-1} \left[ M_n( \pi_{k+1}(t) )-M_n( \pi_k(t) ) \right] 
       + \left( M_n(t)-M_n( \pi_{L_n}(t) ) \right)
\ .
$$
Thus
\begin{align*}
     & \sup_{|t-s| \leq \delta} | M_n(t,s) | \\
\leq & \ 2 \sup_{t_0 \in T_\ell} \sup_{|t-t_0| \leq 2^{-\ell}} | M_n(t,t_0) | \\
\leq    & 2 \sup_{t_0 \in T_\ell} \sup_{|t-t_0| \leq 2^{-\ell}} | M_n(\pi_{\ell+1}(t),t_0) |
     + \sum_{k=\ell+1}^{\ell+L_n-1} 2 \sup_{t_0 \in T_\ell} \sup_{|t-t_0| \leq 2^{-\ell}} | M_n(\pi_{k+1}(t),\pi_k(t)) | \\
     & \quad
     + 2 \sup_{t_0 \in T_\ell} \sup_{|t-t_0| \leq 2^{-\ell}} | M_n(t,\pi_{L_n}(t)) | \ .
\end{align*}

Let us deal with each of the three types of terms. If $t_0 \in T_\ell$ and $|t-t_0| \leq 2^{-\ell}$, $|\pi_{\ell+1}(t)-t_0| \leq | t-t_0 | + |\pi_{\ell+1}(t)-t| \leq 3 \ 2^{-(\ell+1)}$. As such, we have the bound
$$
\sup_{t_0 \in T_\ell} \sup_{|t-t_0| \leq 2^{-\ell}} | M_n(\pi_{\ell+1}(t),t_0) |
\leq
\sup_{t_0 \in T_\ell} | M_n(t_0+2^{-\ell},t_0) |
+
\sup_{t_1 \in T_{\ell+1}} | M_n(t_1+2^{-(\ell+1)},t_1) | \ .
$$
Moreover, because $\pi_{k+1}(t) = \pi_k(t) \pm 2^{-(k+1)}$,
$$
\sup_{t_0 \in T_\ell} \sup_{|t-t_0| \leq 2^{-\ell}} | M_n(\pi_{k+1}(t),\pi_k(t)) |
\leq
\sup_{s \in T_{k+1}} | M_n(s+2^{-{(k+1)}},s) | \ .
$$
Furthermore
$$
\sup_{t_0 \in T_\ell} \sup_{|t-t_0| \leq 2^{-\ell}} | M_n(t,\pi_{L_n}(t)) |
\leq \sup_{s \in T_{L_n}} \sup_{ 0 \leq t < 2^{-L_n}} | M_n(s,s+t) | \ .
$$
Hence
\begin{align}
\label{eq:chaining_bound}
       \sup_{|t-s| \leq \delta = 2^{-\ell}} | M_n(t,s) | 
\leq & \ 2 \sum_{k=\ell}^{\ell+L_n-1} \sup_{s \in T_k} | M_n(s+2^{-{k}},s) | 
     + 2 \sup_{s \in T_{L_n}} \sup_{ 0 \leq t < 2^{-L_n}} | M_n(s,s+t) | \ .
\end{align}
Terms in the sum will be called "bulk increments", while the last term will be called "tail increment". 

\medskip

{\bf Step 2: Reduction to controlling bulk and tail}. In the spirit of the Kolmogorov criterion, fix $0 < \alpha < \half$. Then define the events
\begin{align}
    \label{def:Omega_bulk}
    \Omega_{n, \mathrm{bulk}} = \left\{ \forall \ell \leq k \leq \ell + L_n, \ \forall s \in T_k, \ | M_n(s+2^{-k},s) | \leq \varepsilon 2^{-\alpha k} \sqrt{n} \right\} \ ,
\end{align}
\begin{align}
    \label{def:Omega_tail}
    \Omega_{n, \mathrm{tail}} = \left\{ \forall s \in T_{L_n}, \ \sup_{ 0 \leq t < 2^{-L_n}} | M_n(s,s+t) | \leq \varepsilon n 2^{-\half (\ell + L_n)} \right\} \ .
\end{align}
If $\Omega_{n, \mathrm{bulk}}$ and $\Omega_{n, \mathrm{tail}}$ both hold, then we have from Eq. \eqref{eq:chaining_bound} 
\begin{align*}
         \sup_{|t-s| \leq \delta} | M_n(t,s) | / \sqrt{n}
\leq & \ 2 \varepsilon \sqrt{n} 2^{-\half (\ell+L_n)} + 2 \sum_{k=\ell}^\infty \varepsilon 2^{-\alpha k} \\
\leq & \ 2 \varepsilon  2^{-\half \ell} \left( \sqrt{n} 2^{-\half L_n} + \frac{1}{1-2^{-\alpha}} \right) \ .
\end{align*}
Because of the second point in Assumption \ref{assumption:on_Ln}, $\sqrt{n} 2^{-\half L_n} \rightarrow 0$ and is thus bounded by an absolute constant $C>0$. Hence
\begin{align*}
     \sup_{|t-s| \leq \delta} | M_n(t,s) | / \sqrt{n}
\leq & \ 2 \varepsilon  2^{-\half \ell} \left( C + \frac{1}{1-2^{-\alpha}} \right) \ .
\end{align*}
This latter quantity is smaller than $\varepsilon$ for $\ell$ large enough, and thus the event $\left\{ \sup_{|t-s| \leq \delta} | M_n(t,s) |  \geq \varepsilon \sqrt{n} \right\}$ does not hold. Therefore, considering complements,
\begin{align}
    \forall \varepsilon>0, \ \exists \ell_0 = \ell_0(\varepsilon), \ \forall \ell \geq \ell_0, \ \left\{ \sup_{|t-s| \leq \delta} | M_n(t,s) |  \geq \varepsilon \sqrt{n} \right\} 
    \subset \left( \Omega_{n, \mathrm{bulk}} \cap \Omega_{n, \mathrm{tail}} \right)^c 
    = \Omega_{n, \mathrm{bulk}}^c \cup \Omega_{n, \mathrm{tail}}^c
    \ .
\end{align}
For $\ell \geq \ell_0(\varepsilon)$, we have, by union bound,
$$
     \P\left( \sup_{|t-s| \leq \delta} | M_n(t,s) |  \geq \varepsilon \sqrt{n} \right)
\leq \P\left( \Omega_{n, \mathrm{tail}}^c \right) + \P\left( \Omega_{n, \mathrm{bulk}}^c \right) \ .
$$
Thus the tightness criterion from Eq. \eqref{eq:tightness_criterion} holds provided the above quantity asymptotically vanishes in the following sense
$$
\lim_{\ell \rightarrow \infty} \limsup_{n \rightarrow \infty} \P\left( \Omega_{n, \mathrm{tail}}^c \right) + \P\left( \Omega_{n, \mathrm{bulk}}^c \right) = 0 \ .
 $$
This is done in the last two steps.

\medskip

{\bf Step 3: Controlling the martingale bracket.} 

\smallskip

{\it Step 3.1.} Let us prove
\begin{align}
\label{eq:bracket_bound}
    \langle M^{(n)}(s,t) \rangle_n
  & \leq 16 \left( \sum_{k=1}^n F_{Y | X=X_k}(s,t)
 + \sum_{k=1}^n \mathds{1}_{\{ s < f(X_{\sigma_n(k)}, \varepsilon_k)\leq t \}} 
 \right) \ .
\end{align}
We have
\begin{align*}
   M_j^{(n)}(s,t)
  = & M_j^{(n)}(t) - M_j^{(n)}(s) \\
  = &\sum_{k=1}^{j} \left( \mathds{1}_{\{ f(X_{\sigma_n(k-1)}, \varepsilon_{k-1})\leq t \}}
 + \E_\varepsilon\left[\mathds{1}_{\{ f(X_{\sigma_n(k+1)}, \varepsilon)\leq t \}}\right] \right)
              \Delta \Mk_k^{(n)}(t) \\
 & - \sum_{k=1}^{j} \left( \mathds{1}_{\{ f(X_{\sigma_n(k-1)}, \varepsilon_{k-1})\leq s \}}
 + \E_\varepsilon\left[\mathds{1}_{\{ f(X_{\sigma_n(k+1)}, \varepsilon)\leq s \}}\right] \right)
              \Delta \Mk_k^{(n)}(s) \\
   =& \sum_{k=1}^{j} \left( \mathds{1}_{\{ f(X_{\sigma_n(k-1)}, \varepsilon_{k-1})\leq t \}}
 + \E_\varepsilon\left[\mathds{1}_{\{ f(X_{\sigma_n(k+1)}, \varepsilon)\leq t \}}\right] \right)
              \Delta \Mk_k^{(n)}(s,t) \\
 & + \sum_{k=1}^{j} \left( \mathds{1}_{\{ s < f(X_{\sigma_n(k-1)}, \varepsilon_{k-1})\leq t \}}
   + \E_\varepsilon\left[\mathds{1}_{\{ s < f(X_{\sigma_n(k+1)}, \varepsilon)\leq s \}}\right] \right) \Delta \Mk_k^{(n)}(s)  \ .
\end{align*}

Now let us compute and bound the martingale bracket.
\begin{align*}
 & \langle M^{(n)}(s,t) \rangle_n\\
  :=& \sum_{j=1}^n \E\left( (M_j^{(n)}(s,t) - M_{j-1}^{(n)}(s,t))^2 \ | \ \Fc_{j-1} \right)\\
  =& \sum_{k=1}^n \E\Big[ \Big[ 
 \left( \mathds{1}_{\{ f(X_{\sigma_n(k-1)}, \varepsilon_{k-1})\leq t \}}
 + \E_\varepsilon\left[\mathds{1}_{\{ f(X_{\sigma_n(k+1)}, \varepsilon)\leq t \}}\right] \right)
              \Delta \Mk_k^{(n)}(s,t) \\
 & 
 +
 \left( \mathds{1}_{\{ s < f(X_{\sigma_n(k-1)}, \varepsilon_{k-1})\leq t \}}
   + \E_\varepsilon\left[\mathds{1}_{\{ s < f(X_{\sigma_n(k+1)}, \varepsilon)\leq t \}}\right] \right) \Delta \Mk_k^{(n)}(s) \Big]^2 \ | \ \Fc_{k-1} \Big]\\
  \leq & \sum_{k=1}^n 2 \left( \mathds{1}_{\{ f(X_{\sigma_n(k-1)}, \varepsilon_{k-1})\leq t \}}
 + \E_\varepsilon\left[\mathds{1}_{\{ f(X_{\sigma_n(k+1)}, \varepsilon)\leq t \}}\right] \right)^2 \E\left[ \Delta \Mk_k^{(n)}(s,t)^2 \ | \ \Fc_{k-1} \right]\\
 & 
 + \sum_{k=1}^n 2
 \left( \mathds{1}_{\{ s < f(X_{\sigma_n(k-1)}, \varepsilon_{k-1})\leq t \}}
   + \E_\varepsilon\left[\mathds{1}_{\{ s < f(X_{\sigma_n(k+1)}, \varepsilon)\leq t \}}\right] \right)^2 \E\left[ \Delta \Mk_k^{(n)}(s)^2 \ | \ \Fc_{k-1} \right] \\
  \leq & 8 \sum_{k=1}^n \E\left[ \Delta \Mk_k^{(n)}(s,t)^2 \ | \ \Fc_{k-1} \right]\\
 & 
 + 4 \sum_{k=1}^n
 \left( \mathds{1}_{\{ s < f(X_{\sigma_n(k-1)}, \varepsilon_{k-1})\leq t \}}
   + \E_\varepsilon\left[\mathds{1}_{\{ s < f(X_{\sigma_n(k+1)}, \varepsilon)\leq t \}}\right] \right)^2 \ .
\end{align*}
Moreover
$$
\E\left[ \Delta \Mk_k^{(n)}(s,t)^2 \ | \ \Fc_{k-1} \right]
= F^{X=X_{\sigma_n(k)}}(s,t)\left( 1 - F^{X=X_{\sigma_n(k)}}(s,t) \right)
\leq F^{X=X_{\sigma_n(k)}}(s,t) \ ,
$$
so that
\begin{align*}
 & \langle M^{(n)}(s,t) \rangle_n\\
   \leq & 8 \sum_{k=1}^n F^{X=X_{\sigma_n(k)}}(s,t)\\
 & 
 + 8 \sum_{k=1}^n
 \left( \mathds{1}_{\{ s < f(X_{\sigma_n(k-1)}, \varepsilon_{k-1})\leq t \}}
   + F^{X=X_{\sigma_n(k+1)}}(s,t)^2 \right) \\
   = & 16 \sum_{k=1}^n F^{X=X_k}(s,t)
 + 8 \sum_{k=1}^n \mathds{1}_{\{ s < f(X_{\sigma_n(k-1)}, \varepsilon_{k-1})\leq t \}} \ .
\end{align*}

This proves Eq. \eqref{eq:bracket_bound}.

\smallskip

{\it Step 3.2.} Before diving into controlling bulk and tail, it will be useful to prove 
\begin{align}
\label{eq:bracket_tail}
       \P\left( \langle M^{(n)}(s,t=s+2^{-k}) \rangle_n \geq n 2^{-k} \ 32 (1+K) \right)
\leq & \ 2 \exp\left( - n 2^{-k} K^2 \right) \ .
\end{align}
Starting from Eq. \eqref{eq:bracket_bound}, we have for $K>0$,
\begin{align*}
     & \ \P\left( \langle M^{(n)}(s,t=s+2^{-k}) \rangle_n \geq n 2^{-k} \ 32 (1+K) \right) \\
\leq & \ \P\left( \sum_{k=1}^n F_{Y | X=X_k}(s,t=s+2^{-k})
 + \sum_{k=1}^n \mathds{1}_{\{ s < f(X_{\sigma_n(k)}, \varepsilon_k)\leq t=s+2^{-k} \}} \geq n 2^{-k} \ 2 (1+K) \right) \\
\leq & \ \P\left( \sum_{k=1}^n F_{Y | X=X_k}(s,t=s+2^{-k}) \geq n 2^{-k} (1+K) \right)
     + \P\left( \sum_{k=1}^n \mathds{1}_{\{ s < f(X_{\sigma_n(k)}, \varepsilon_k)\leq t=s+2^{-k} \}} \geq n 2^{-k} (1+K) \right) \ .
\end{align*}
Now let us explain why both terms can be bound using the same Cramér bound so that
\begin{align}
\label{eq:bracket_cramer_bound}
     \P\left( \langle M^{(n)}(s,t=s+2^{-k}) \rangle_n \geq n 2^{-k} \ 32 (1+K) \right)
\leq & \ 2 \exp\left[ - n \mathrm{KL}\left( 2^{-k} (1+K) || 2^{-k} \right) \right] \ ,
\end{align}
with the Kullback-Leibler divergence for Bernoulli random variables being
$$
\mathrm{KL}\left( a || t \right)
:= a \log \frac{a}{p} + (1-a) \log \frac{1-a}{1-p} \ .
$$
Since we assumed $F_Y(t) = t$, we have that $\sum_{k=1}^n \mathds{1}_{\{ s < f(X_{\sigma_n(k)}, \varepsilon_k)\leq t \}} \stackrel{\Lc}{=} \mathrm{Bin}(n, 2^{-k})$ and we have the usual Cramér bound
\begin{align*}
\P\left( \sum_{k=1}^n \mathds{1}_{\{ s < f(X_{\sigma_n(k)}, \varepsilon_k)\leq t \}} \geq n 2^{-k} (1+K) \right)
=    & \ \P\left( \mathrm{Bin}(n, 2^{-k}) \geq n 2^{-k} (1+K) \right) \\
\leq & \ \exp\left[ - n \mathrm{KL}\left( 2^{-k} (1+K) || 2^{-k} \right) \right] \ .
\end{align*}
In order to obtain the {\it same} bound for $\P\left( \sum_{k=1}^n \mathds{1}_{\{ s < f(X_{\sigma_n(k)}, \varepsilon_k)\leq t \}} \geq n 2^{-k} (1+K) \right)$, recall the following. The Cramér bound is proved using a Chernoff bound, which only increases upon increasing the moment generating function. This is the case because of the Jensen inequality, as we have
$$
\forall \lambda \geq 0, \ \forall k \geq 1, \ \E \exp\left( \lambda F_{Y | X=X_k}(s,t) \right)
\leq \E \exp\left( \lambda \1_{\{ s< Y_k \leq t \}} \right) \ .
$$
This finishes the argument for Eq. \eqref{eq:bracket_cramer_bound}. Continuing from there, let us aim for a lower bound for $\mathrm{KL}\left( 2^{-k} (1+K) || 2^{-k} \right)$. We have for $a,p$ in $(0,1)$, 
$$
\mathrm{KL}\left( p || p \right) = 0, \ \quad
\frac{ \partial \mathrm{KL}\left( a || p \right)}{\partial a}_{|a=p} = 0, \ \quad
$$
$$
\frac{ \partial^2 \mathrm{KL}\left( a || p \right)}{\partial a^2} = \frac{1}{1-a}+\frac{1}{a} \geq \frac{1}{a} \ .
$$
By the mean value inequality (for the second derivative), we deduce
$$ 
\mathrm{KL}\left( 2^{-k} (1+K) || 2^{-k} \right)
\geq \left( 2^{-k} K \right)^2 \inf_{2^{-k} \leq a \leq 2^{-k}(1+K)} \frac{ \partial^2 \mathrm{KL}\left( a || 2^{-k} \right)}{\partial a^2}
\geq K^2 2^{-k} \ .
$$
Thanks to this inequality, Eq. \eqref{eq:bracket_cramer_bound} implies the desired Eq. \eqref{eq:bracket_tail}.

\medskip

{\bf Step 4: $\lim_{\ell \rightarrow \infty} \limsup_{n \rightarrow \infty} \P( \Omega_{n,\mathrm{tail}}^c ) \rightarrow 0$ by controlling tail increments.} Here we perform a crude bound
$$
| M_n(s, t) | \leq 3 \left( \sum_{k=1}^n F_{Y | X=X_k}(s,t)
 + \sum_{k=1}^n \mathds{1}_{\{ s < f(X_{\sigma_n(k)}, \varepsilon_k)\leq t \}} \right) \ .
$$
The idea is that at very small scales, where $|t-s| \ll 1$, the self averaging of martingales does not help us. Therefore, we might as well be crude. Furthermore, this bound is increasing in $t$, hence
$$
\sup_{s \leq t \leq s+2^{-k}} | M_n(s, t) | \leq 3 \left( \sum_{k=1}^n F_{Y | X=X_k}(s,s+2^{-k})
 + \sum_{k=1}^n \mathds{1}_{\{ s < f(X_{\sigma_n(k)}, \varepsilon_k)\leq s+2^{-k} \}} \right) \ .
$$

Noticing that this bound is of the same form as what allowed us to control the bracket in Step 2, we find
\begin{align*}
       \P\left( \sup_{s \leq t \leq s+2^{-k}} | M_n(s, t) | \geq n 2^{-k} \ 6 (1+K) \right)
\leq & \ 2 \exp\left( - n 2^{-k} K^2 \right) \ .
\end{align*}
Thus, by union bound,
\begin{align*}
       \P\left( \sup_{s \in T_k} \sup_{s \leq t \leq s+2^{-k}} | M_n(s, t) | \geq n 2^{-k} \ 6 (1+K) \right)
\leq & \ 2^{k+1} \exp\left( - n 2^{-k} K^2 \right) \ .
\end{align*}
Recall the definition of $\Omega_{n,\mathrm{tail}}$ from Eq. \eqref{def:Omega_tail}. Setting $k=L_n$ in the previous expression gives us a control on $\P\left( \Omega_{n,\mathrm{tail}}^c \right)$, as follows. For $n$ large enough so that $\varepsilon n 2^{-\half (\ell+L_n)} > n 2^{-L_n} \ 6 (1+K)$ or equivalently $\varepsilon 2^{-\half \ell} > 2^{-\half L_n} \ 6 (1+K)$, we have
\begin{align*}
       \P\left( \Omega_{n,\mathrm{tail}}^c \right)
       & = \P\left( \sup_{s \in T_{L_n}} \sup_{0 \leq t \leq 2^{-L_n}} | M_n(s, s+t) | > \varepsilon n 2^{-\half (\ell +L_n)} \right) \\
       & \leq
       \P\left( \sup_{s \in T_{L_n}} \sup_{s \leq t \leq s+2^{-L_n}} | M_n(s, s+t) | \geq n 2^{-L_n} \ 6 (1+K) \right)
\leq \ 2^{L_n+1} \exp\left( - n 2^{-L_n} K^2 \right) \ .
\end{align*}
Clearly, for fixed $K>0$ and $L_n = \half \log_2 n + \Oc(1)$ for example, this quantity $\stackrel{n \rightarrow \infty}{\longrightarrow} 0$ while is $\ell$ fixed.

\medskip

{\bf Step 5: $\lim_{\ell \rightarrow \infty} \limsup_{n \rightarrow \infty} \P( \Omega_{n,\mathrm{bulk}}^c ) \rightarrow 0$ by controlling bulk increments.} We start by the following union bound
\begin{align*}
     \P\left( \Omega_{n, \mathrm{bulk}}^c \right)
\leq & \ \sum_{k=\ell}^{\ell + L_n} \sum_{s \in T_k} \P\left( | M_n(s+2^{-k},s) | > \varepsilon 2^{-\alpha (k+1)} \sqrt{n} \right) \\
\leq & \ \sum_{k=\ell}^{\ell + L_n} 2^{k+1} \sup_{s \in T_k} \P\left( | M_n(s+2^{-k},s) | > \varepsilon 2^{-\alpha k} \sqrt{n} \right) \ .
\end{align*}

As such, we need to control $\P\left( | M_n(s, s+2^{-k}) | \geq \varepsilon 2^{-\alpha k} \sqrt{n} \right)$ for $\ell \leq k \leq \ell + L_n$, and $s \in T_k$. Since we are dealing with a martingale increment, while also making sure to be at the diffusive scale ($ k \leq \ell + L_n $), the Freedman inequality is a natural tool. Recall (see \cite{freedman1975tail} and \cite[Theorem 3.10]{bercu2015concentration}) that the Freedman inequality says that for a martingale $M_n$ with increments bounded by $b>0$, we have
$$
\P\left( M_n > \sqrt{n} x, \langle M \rangle_n \leq ny \right)
\leq \exp\left( - \frac{x^2}{y + bx/\sqrt{n} } \right) \ .
$$
In our case, increments are bounded by $b=2$. As such, for $y=2^{-k} 32(1+K)$ and $x = 2^{-\alpha k} \varepsilon$, we have
\begin{align*}
     & \ \P\left( |M_n(s, s+2^{-k})| \geq \sqrt{n} 2^{-\alpha k} \varepsilon \right) \\
=    & \ \P\left( |M_n(s, s+2^{-k})| \geq \sqrt{n} x \right) \\
\leq & \ \P\left(  |M_n(s, s+2^{-k})| \geq \sqrt{n} x, \langle M^{(n)}(s, s+2^{-k}) \rangle_n \leq n y\right)
       + \P\left( \langle M^{(n)}(s, s+2^{-k}) \rangle_n > n y \right) \\
\leq & \ \P\left(   M_n(s, s+2^{-k}) \geq \sqrt{n} x, \langle M^{(n)}(s, s+2^{-k}) \rangle_n \leq n y\right) \\
     & \ + \P\left(  -M_n(s, s+2^{-k}) \geq \sqrt{n} x, \langle M^{(n)}(s, s+2^{-k}) \rangle_n \leq n y\right)
         + \P\left( \langle M^{(n)}(s, s+2^{-k}) \rangle_n > n y \right) \\
\leq & \ 2 \exp\left( - \frac{x^2}{y + bx/\sqrt{n}} \right) 
     + 2 \exp\left( - n 2^{-k} K^2 \right) \\
= & \ 2 \exp\left( - \frac{2^{-2\alpha k} \varepsilon^2}{2^{-k} 32(1+K) + b 2^{-\alpha k} \varepsilon/\sqrt{n}} \right)
     + 2 \exp\left( - n 2^{-k} K^2 \right) \\
= & \ 2 \exp\left( - \frac{2^{(1-2\alpha) k} \varepsilon^2}{32(1+K) + b 2^{(1-\alpha) k} \varepsilon/\sqrt{n}} \right)
     + 2 \exp\left( - n 2^{-k} K^2 \right) \ .
\end{align*}
Using the fact that $L_n - \log_2 n \rightarrow -\infty$ (Assumptions \ref{assumption:on_Ln}), we have for $k \leq L_n$ that 
$$ \frac{2^{(1-\alpha)k}}{\sqrt{n}}
\leq \sqrt{ \frac{2^k}{n} } \ 2^{(\half-\alpha)k}
\leq \sqrt{ \frac{2^{L_n}}{n} } \rightarrow 0 \ .
$$
As such
\begin{align*}
     & \ \P\left( |M_n(s, s+2^{-k})| \geq \sqrt{n} 2^{-\alpha k} \varepsilon \right) \\
\leq & \ 2 \exp\left( - \frac{2^{(1-2\alpha) k} \varepsilon^2}{32(1+K) + o_n(1)} \right)
     + 2 \exp\left( - n 2^{-k} K^2 \right) \ ,
\end{align*}
with the implicit error being uniform in $\ell \leq k \leq \ell + L_n$.

In the end
\begin{align*}
     & \limsup_{n \rightarrow \infty} \P\left( \Omega_{n, \mathrm{bulk}}^c \right)\\
\leq & \ \limsup_{n \rightarrow \infty} \sum_{k=\ell}^{\ell + L_n} 2^{k+2} \left( \exp\left( - \frac{2^{(1-2\alpha) k} \varepsilon^2}{32(1+K) + o_n(1)} \right) + \exp\left( - n 2^{-k} K^2 \right) \right) \\
\leq & \ \limsup_{n \rightarrow \infty} \sum_{k=\ell}^{\ell + L_n} 2^{k+2} \exp\left( - \frac{2^{(1-2\alpha) k} \varepsilon^2}{33(1+K)} \right) \\
     & \quad + \limsup_{n \rightarrow \infty} \sum_{k=\ell}^{\ell + L_n} 2^{k+2} \exp\left( - n 2^{-k} K^2 \right) \\
\leq & \ \sum_{k=\ell}^{\infty} 2^{k+2} \exp\left( - \frac{2^{(1-2\alpha) k} \varepsilon^2}{33(1+K)} \right) \\
     & \ \quad + \limsup_{n \rightarrow \infty} (L_n + 1) 2^{\ell+L_n+2} \exp\left( -n 2^{-\ell-L_n} K^2 \right) \\
\leq & \ \sum_{k=\ell}^{\infty} \ 2^{k+2} \exp\left( - \frac{2^{-(2\alpha-1) k} \varepsilon^2}{33(1+K)} \right) \\
\stackrel{\ell \rightarrow \infty}{\longrightarrow} & 0 \ .
\end{align*}

\subsection{Proof of Eq. \texorpdfstring{\eqref{eq:Tn_cvm_error}}{En} }
\label{subsection:proof_cvm_error}

In fact, we prove
\begin{proposition}
\label{proposition:cvm_error}
We have the limit in $L^2(\Omega, \P)$ and in probability
\begin{align*}
    \lim_{n \rightarrow \infty} \sqrt{n} E_n & = 0 \ .
\end{align*}
\end{proposition}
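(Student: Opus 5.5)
The plan is to write $E_n$ as a normalized double sum and bound its second moment directly. Since $F_n$ is the empirical CDF of the $Y_k$, we have
$$
E_n=\frac1n\sum_{k=1}^n\big(T_n(Y_k)-T(Y_k)\big)-\int_\R\big(T_n(t)-T(t)\big)\,dF_Y(t).
$$
Recall that $T_n(t)=\frac1n\sum_i \1_{\{\max(Y_i,Y_{N_n(i)})\le t\}}$, and write $W_i:=\max(Y_i,Y_{N_n(i)})$. Then $E_n$ becomes a V-statistic-type expression:
$$
E_n=\frac{1}{n^2}\sum_{i,k}\Big(\1_{\{W_i\le Y_k\}}-\bar F(W_i)-T(Y_k)+c\Big),
$$
where $\bar F(w):=1-F_Y(w)$ up to left-limits and $c:=\int T\,dF_Y$. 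Set
$$
\kappa(w,y):=\1_{\{w\le y\}}-\bar F(w)-T(y)+c .
$$
This kernel is degenerate in $y$: for fixed $w$, $\E[\kappa(w,Y)]=0$. It is not fully degenerate in $w$, because $W_i$ does not have the distribution $dT$ exactly, only approximately.

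\textbf{Step 1: split the terms.} I will separate the terms with $k\in\{i,N_n(i)\}$. There are at most $2n$ of them and each is bounded, so they contribute $\Oc(1/n)$, which is negligible after multiplying by $\sqrt n$. For the remaining terms I will compute $\E[(\sqrt n E_n)^2]=\frac{1}{n^3}\sum\E[\kappa(W_i,Y_k)\kappa(W_{i'},Y_{k'})]$.

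\textbf{Step 2: condition on the $W$'s and the $X$'s.} When $k$ differs from $k'$ and both differ from $i,i',N_n(i),N_n(i')$, I will condition on everything except $Y_k$. Conditionally on $X$, $Y_k$ is independent of the pair structure, and $\E[\kappa(w,Y_k)\mid X_k]$ equals $F_{Y|X_k}$-type terms, which average to $0$ over $X_k$. The difficulty is that $X_k$ also determines the neighbor graph. I would handle this through exchangeability, exactly as in Lemma~\ref{lemma:remainder_control}: reduce to a fixed finite collection of indices $(1,2,3,4)$ and use the explicit joint law of nearest neighbors on the circle from that proof. Under that law, the dependence between distinct pairs is $\Oc(1/n)$ in covariance. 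The count of index tuples is $n^4$; with a $1/n^3$ prefactor, we need each such covariance to be $o(1/n)$.

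\textbf{Step 3: the diagonal terms.} When $k=k'$, there are about $n^3$ tuples, each of size $\Oc(1)$, contributing $\Oc(1)$ in total. Here I refine the count: for fixed $k$, the sum over $i,i'$ gives
$$
\E\Big[\big(\tfrac1n\textstyle\sum_i\kappa(W_i,Y_k)\big)^2\Big].
$$
The inner average $\frac1n\sum_i\kappa(W_i,y)=T_n(y)-T(y)-(\ldots)$ converges to $0$ uniformly in $y$, by consistency of $T_n$ (Proposition~\ref{proposition:consistency}, applied via a Glivenko--Cantelli argument with monotonicity in $t$) together with boundedness. Dominated convergence then makes this contribution $o(1)$.

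\textbf{Main obstacle.} I expect the main obstacle to be the off-diagonal covariance estimate in Step 2. The quantity $\E[\kappa(W_i,Y_k)\kappa(W_{i'},Y_{k'})]$ must be shown to be $o(1/n)$, not merely $\Oc(1/n)$. I would try to exploit the degeneracy in $y$: integrating out $Y_k$ given its $X_k$ kills the leading term, and what remains is a covariance between nearest-neighbor functionals. That covariance should be bounded by $\frac{C}{n}\,\E|\varphi(t,X_1)-\varphi(t,X_{N_n(1)})|$ via the singular/continuous decomposition already used to prove Eq.~\eqref{eq:remainder_to_prove}. The factor $\E|\varphi(t,X_1)-\varphi(t,X_{N_n(1)})|$ tends to $0$ by Chatterjee's estimate, uniformly after integrating in $t$.
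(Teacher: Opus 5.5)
The first thing to fix is Step 2, which carries all of the difficulty: it is only a heuristic, and the mechanism it describes is not the one that works. The setup itself is fine. Reducing $\E[(\sqrt n E_n)^2]$ to a V-statistic in the kernel $\kappa$ is correct. Steps 1 and 3 go through, since the diagonal $k=k'$ is $o(1)$ by uniform consistency of $T_n$.

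The specific problems with Step 2 are these.
\begin{enumerate}
\item Integrating out $Y_k$ given $X_k$ kills nothing. One has $\E[\kappa(w,Y_k)\mid X_k]=g(w,X_k)$ with $g(w,x):=\P(Y\ge w\mid X=x)-\bar F(w)-\E[T(Y)\mid X=x]+c$. This is an order-one quantity that averages to zero only over $X_k$. But $X_k$ is tied to the neighbour graph through the very constraint $k\notin\{N_n(i),N_n(i')\}$ you imposed.
\item The tool you want to import does not transfer. The singular/continuous analysis in the proof of Lemma~\ref{lemma:remainder_control} is about the four-point law of $(X_1,X_{N_n(1)},X_2,X_{N_n(2)})$, whereas your terms involve six points. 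Its continuous-part bound is also a one-sided density domination, valid only for nonnegative integrands. That sufficed there because only $\limsup\le 0$ was needed, but $\kappa$ is signed.
\item The factor $\E|\varphi(t,X_1)-\varphi(t,X_{N_n(1)})|$ is not what makes these terms small, since $g(w,x)$ itself is not small.
\item You omit the tuples with $k\neq k'$ but $k\in\{i',N_n(i')\}$ or $k'\in\{i,N_n(i)\}$. There are $\Theta(n^3)$ of them, so after the $n^{-3}$ prefactor they are a priori of order one and need their own argument.
\end{enumerate}

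The paper avoids all index combinatorics with a leave-out stability argument. By exchangeability, the quantity to control is $\E[\Lc(T_n,Y_1)^2]+(n-1)\E[\Lc(T_n,Y_1)\Lc(T_n,Y_2)]$. Deleting observations $1,2$ changes $T_n$ by at most $4/n$, uniformly in $t$. The leave-two-out statistic $T_n^{-\{1,2\}}$ is independent of $(Y_1,Y_2)$, so the main cross term vanishes exactly by degeneracy. What remains is $(n-1)\cdot\Oc(1/n)\cdot\E|\Lc(T_n^{-\{1,2\}},Y_1)|\to 0$.

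If you prefer to keep your expansion, Step 2 can be closed without Chatterjee's estimate, as follows.
\begin{enumerate}
\item Let $\tilde N$ be the successor map after deleting $k,k'$. Let $A$ be the union of the arcs from $X_i$ to $X_{\tilde N(i)}$ and from $X_{i'}$ to $X_{\tilde N(i')}$.
\item Your constraint is then exactly $\{X_k,X_{k'}\notin A\}$, and $A$ is independent of $(X_k,X_{k'})$. On this event the $W$'s depend only on points other than $k,k'$.
\item Since $\int g(w,x)\,dF_X(x)=0$ for every $w$, integrating out $X_k$ and $X_{k'}$ turns $\int_{A^c}\int_{A^c}$ into $\int_A\int_A$.
\item Each generic term is therefore $\Oc(n^{-2})$, because the $F_X$-mass of $A$ is a sum of two $\mathrm{Beta}(1,n-3)$ variables. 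It is not merely $\Oc(1/n)$, because both free $Y$-slots must couple to the graph to give a nonzero contribution.
\item The same argument with a single free index gives $\Oc(1/n)$ for each of the $\Theta(n^3)$ partially coincident tuples.
\end{enumerate}
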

\begin{proof}
{\bf Step 1: Reduction.}
First, write
$$
E_n = \int_\R \left[ T_n(t) - \E T_n(t) \right] d( F_n(t) - F_Y(t) ) 
    + \int_\R \left[ \E T_n(t) - T(t) \right] d( F_n(t) - F_Y(t) ) \ .
$$
Notice that because the integrand is deterministic, and $F_n-F$ is centered, we have
\begin{align*}
  & \E\left[ \left( \int_\R \left[ \E T_n(t) - T(t) \right] d( F_n(t) - F_Y(t) ) \right)^2 \right] \\
= & \frac{1}{n} \E\left[ \left( \E T_n(t)_{|t=Y} - T(t)_{|t=Y} \right)^2 \right]
= \frac{1}{n} \int dF_Y(t) \left( \E T_n(t) - T(t) \right)^2 \ .
\end{align*}
Because $\E T_n(t) - T(t) \rightarrow 0$, as shown by Chatterjee in fact, we have by dominated convergence that
$$
\lim_{n \rightarrow \infty} n \E\left[ \left( \int_\R \left[ \E T_n(t) - T(t) \right] d( F_n(t) - F_Y(t) ) \right)^2 \right]
= 0 \ .
$$
Because $(a^2+b^2) \leq a^2 + b^2$, it thus suffices to prove
\begin{align}
\label{eq:to_prove_error}
\lim_{n \rightarrow \infty} 
n \E\left[ \left( \int_\R \left[ T_n(t) - \E T_n(t) \right] d( F_n(t) - F_Y(t) ) \right)^2 \right] & = 0 \ .
\end{align}

\medskip

{\bf Step 2: Some preparatory work.} 
We introduce the following notation, for any (possibly random) function $U$,
$$
\Lc(U, y) := \left[ U(s) - \E U(s) \right]_{|s=y} - \int dF_Y(t) \left[ U(t) - \E U(t) \right] \ .
$$
Notice the following properties.
\begin{itemize}
    \item If $U$ is independent from $Y$, then 
    $$ \E[ \Lc(U, Y) \mid U ] = \E[ \Lc(U, Y) ] =  0 \ .$$
    \item If $U_n$ is bounded and independent from $Y$ and if we have pointwise convergence
    $$
        \forall s, \ \lim_{n \rightarrow \infty} 
        \left| U_n(s) - \E U_n(s) \right|
        = 0 \ .
    $$
    Then, by dominated convergence, for all $p \geq 1$, we have
    $$ \lim_{n \rightarrow \infty}
    \E \left| \Lc(U_n, Y) \right|^p = 0 \ .$$
\end{itemize}

\medskip

{\bf Step 3: Proving Eq. \eqref{eq:to_prove_error}.} 
We start by writing
\begin{align*}
  & \ \int_\R \left[ T_n(t) - \E T_n(t) \right] d( F_n(t) - F_Y(t) ) \\
= & \ 
\frac{1}{n}\sum_{k=1}^n \left( \left[ T_n(t) - \E T_n(t) \right]_{|t=Y_k} - \int dF_Y(t) \left[ T_n(t) - \E T_n(t) \right] \right) \\
= & \ \frac{1}{n}\sum_{k=1}^n \Lc( T_n, Y_k ) \ .
\end{align*}
Using exchangeability of the sample, and the fact that $T_n$ gives the same result, irrespective of the order, we have
\begin{align*}
  & \E\left[ \left( \int_\R \left[ T_n(t) - \E T_n(t) \right] d( F_n(t) - F_Y(t) ) \right)^2 \right] \\
= & \frac{1}{n^2} 
    \sum_{k,l=1}^n \E\left[ \Lc( T_n, Y_k ) \Lc( T_n, Y_l ) \right]\\
= & \frac{1}{n} 
    \E \left[ \Lc( T_n, Y_1 )^2 \right]
    + 
    \frac{2}{n^2} \frac{n(n-1)}{2}
    \E\left[ \Lc( T_n, Y_1 ) \Lc( T_n, Y_2 ) \right]
    \ .
\end{align*}

The issue here is the lack of independence between $t \mapsto T_n(t)$ and the values $Y_1$, $Y_2$ where it is evaluated. Given $I \subset \llbracket 1, n \rrbracket$, in particular $I$ being $\{1\}$, $\{2\}$ or $\{1,2\}$, a fruitful idea is to write 
$$
T_n(t) = T_n^{-I}(t) + R_n^{-I}(t) \ ,
$$
where $R_n^{-I}$ is a residue and $T_n^{-I}$ is the natural nearest neighbor regressor, which does not use the sub-sample $\left( Y_i \right)_{i \in I}$. In formulas, this means
$$
T_n^{-I}(t) := \frac{1}{n} \sum_{i \in \llbracket 1,n \rrbracket \setminus I } \1_{\{ Y_i \leq t \}} \1_{\{ Y_{N_n^{I}(i)} \leq t \}}\ .
$$
Clearly for $I$ with cardinal $1$ or $2$, we have $|R_n^{-I}| \leq 4/n$.
Continuing, we have
\begin{align*}
  & n \E\left[ \left( \int_\R \left[ T_n(t) - \E T_n(t) \right] d( F_n(t) - F_Y(t) ) \right)^2 \right] \\
= & \ \E \left[ \Lc( T_n, Y_1 )^2 \right]
    + 
    (n-1) \E\left[ \Lc( T_n, Y_1 ) \Lc( T_n, Y_2 ) \right]\\
= & \ \E \left[ \left( \Lc( T_n^{-\{1\}}, Y_1 ) + \Oc\left( \frac1n \right) \right)^2 \right] \\
  & \ + 
    (n-1) \E \left[ 
             \left( \Lc( T_n^{-\{1,2\}}, Y_1 ) + \Lc( R_n^{-\{1,2\}}, Y_1 ) \right) 
             \left( \Lc( T_n^{-\{1,2\}}, Y_2 ) + \Lc( R_n^{-\{1,2\}}, Y_2 ) \right) \right]  \\
= & \ \Oc\left( \frac1n \right)
    + 
    \E \left[ \Lc( T_n^{-\{1\}}, Y_1 )^2 \right]
    +  
    (n-1) \E \left( \Lc( T_n^{-\{1,2\}}, Y_1 ) \Lc( T_n^{-\{1,2\}}, Y_2 ) \right)\\
  & \ + (n-1) \E \left( \Lc( T_n^{-\{1,2\}}, Y_1 ) \Lc( R_n^{-\{1,2\}}, Y_2 ) \right)
    +
    (n-1) \E \left( \Lc( R_n^{-\{1,2\}}, Y_1 ) \Lc( T_n^{-\{1,2\}}, Y_2 ) \right) \\
= & \ \Oc\left( \frac1n \right)
    + 
    \E \left[ \Lc( T_n^{-\{1\}}, Y_1 )^2 \right]
    +  
    (n-1) \E \left( \Lc( T_n^{-\{1,2\}}, Y_1 ) \Lc( T_n^{-\{1,2\}}, Y_2 ) \right)\\
  & \ + 2 (n-1) \E \left( \Lc( T_n^{-\{1,2\}}, Y_1 ) \Lc( R_n^{-\{1,2\}}, Y_2 ) \right) \ .
\end{align*}
Now, we invoke the remarks of Step 2. By independence, conditionally on $T_n^{-\{1,2\}}$ and centering of the random variables, we have $\E \left( \Lc( T_n^{-\{1,2\}}, Y_1 ) \Lc( T_n^{-\{1,2\}}, Y_2 ) \right) = 0$. Furthermore, since $T_n^{-\{1\}}$ and $Y_1$ are independent, and $t \mapsto T_n^{-\{1\}}(t)$ concentrates around its expectation pointwise, we have $\E \left( \Lc( T_n^{-\{1\}}, Y_1 )^2 \right) \rightarrow 0$. As such
\begin{align*}
    n \E\left[ \left( \int_\R \left[ T_n(t) - \E T_n(t) \right] d( F_n(t) - F_Y(t) ) \right)^2 \right]
= & \ o(1)
    + 
    2 (n-1) \E \left( \Lc( T_n^{-\{1,2\}}, Y_1 ) \Lc( R_n^{-\{1,2\}}, Y_2 ) \right)
    \ .
\end{align*}
Hence
\begin{align*}
  & \limsup_{n \rightarrow \infty} n \E\left[ \left( \int_\R \left[ T_n(t) - \E T_n(t) \right] d( F_n(t) - F_Y(t) ) \right)^2 \right] \\
\leq & \ \limsup_{n \rightarrow \infty} 8 \E \left( \left| \Lc( T_n^{-\{1,2\}}, Y_1 ) \right| \right)
    \ .
\end{align*}
Invoking again that $\E \left( \left| \Lc( T_n^{-\{1,2\}}, Y_1 ) \right| \right) \rightarrow 0$ from Step 2, we are done. 
\end{proof}

\appendix



\section{On the stochastic representation \texorpdfstring{$Y = f(X, \varepsilon)$}{Y=f(X,e)}}
\label{appendix:representation_Y}

In this appendix, given the distribution of $(X,Y)$, we discuss the representation
$$
Y = f(X,\varepsilon) \ ,
$$
where $f = f_{X,Y}$ is a function that can be constructed (in a generic way) from the joint distribution $\mathbb P_{X,Y}$. The general theorem is as follows.

\begin{theorem}[Transfer Theorem]
\label{thm:transfer}
Given the joint distribution of $(X, Y) \in \mathbb{R}^{d_1} \times \mathbb{R}^{d_2}$, the pair $(X,Y)$ can be realized as follows.

There exist
\begin{itemize}
\item a measurable function $f : \mathbb{R}^{d_1} \times [0,1] \to \mathbb{R}^{d_2}$,
\item and a uniform random variable $\varepsilon \in [0,1]$, independent of $X$,
\end{itemize}
such that, up to equality in law with the original pair $(X,Y)$,
$$
Y = f(X,\varepsilon) \ .
$$
\end{theorem}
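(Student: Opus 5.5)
The plan is to build $f$ from a measurable version of the conditional distribution of $Y$ given $X$, and then to apply a generalized inverse (quantile) transform coordinate by coordinate.

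\textbf{Scalar case $d_2=1$.} Since $\R^{d_2}$ is a standard Borel space, we may fix a regular conditional distribution $x\mapsto \mu_x=\P_{Y\mid X=x}$. This is a probability kernel, meaning that $x\mapsto\mu_x(B)$ is measurable for every Borel set $B$. Set $F_x(t):=\mu_x((-\infty,t])$ and
$$
f(x,u):=\inf\{t\in\mathbb{Q} : F_x(t)\ge u\},\qquad u\in(0,1),
$$
with an arbitrary fixed value for $u\in\{0,1\}$, which is a null set. Restricting the infimum to rationals is justified by right-continuity of $F_x$. It also gives joint measurability cheaply: $\{f(x,u)\le s\}=\{u\le F_x(s)\}$, and this set is measurable in $(x,u)$ because $F_x(s)$ is measurable in $x$.

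Now take $X$ with its given law and an independent uniform $\varepsilon$. The standard quantile identity shows that, conditionally on $X=x$, the variable $f(x,\varepsilon)$ has CDF $F_x$, hence law $\mu_x$. Consequently, for Borel sets $A,B$,
$$
\P\bigl(X\in A,\ f(X,\varepsilon)\in B\bigr)=\int_A\mu_x(B)\,\P_X(dx)=\P(X\in A,Y\in B).
$$
This gives equality in law of the pairs.

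\textbf{Vector case.} I would proceed by iterated conditioning. A single uniform $\varepsilon$ can be split measurably into independent uniforms $\varepsilon^{(1)},\dots,\varepsilon^{(d_2)}$ by interleaving binary digits. Then define recursively
$$
Y^1=f_1(X,\varepsilon^{(1)}),\quad Y^k=f_k\bigl((X,Y^1,\dots,Y^{k-1}),\varepsilon^{(k)}\bigr),
$$
where each $f_k$ is the scalar construction applied to the conditional law of $Y^k$ given $(X,Y^1,\dots,Y^{k-1})$. An induction on $k$ shows that the joint law of $(X,Y^1,\dots,Y^k)$ is the correct one, and composing these maps yields a measurable $f$.

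\textbf{Main obstacle.} The delicate points are measure-theoretic. The first is the existence of regular conditional distributions, which I would invoke as a standard fact on Polish spaces. The second is joint measurability of $f$ in $(x,u)$, which the rational-infimum trick handles. Everything else is the classical one-dimensional quantile transform.
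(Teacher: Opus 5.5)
Your proposal is correct and follows essentially the same route as the paper: a regular conditional distribution combined with the left-continuous quantile transform $u\mapsto\inf\{t : F_{Y\mid X=x}(t)\ge u\}$ in the scalar case, and iterated (Rosenblatt) conditioning for $d_2>1$. Your level-set identity $\{f(x,u)\le s\}=\{u\le F_x(s)\}$ and your splitting of the single uniform $\varepsilon$ into $d_2$ independent uniforms by interleaving binary digits make explicit two points that the paper only asserts or leaves implicit: joint measurability, and the source of the independent uniforms needed for the successive coordinates.
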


The proof can be found in many classical references, for example \cite[Theorem 6.10, 2nd edition]{kallenberg2002foundations}. Nevertheless, we provide here a self-contained argument.

\begin{proof}
We begin with the case $d_2 = 1$.

\medskip

\noindent
{\bf Explicit coupling via disintegration:}
By regular conditional probability, we know that for $\mathbb P_X$-almost every $x$, there exists a conditional cumulative distribution function $F_{Y \mid X=x}$. Considering its left-continuous inverse, we define
$$
f(x,\varepsilon) := F_{Y \mid X=x}^{\langle -1 \rangle}(\varepsilon) \ .
$$

It is a standard exercise to verify measurability with respect to the product $\sigma$-algebra on $\mathbb{R}^{d_1} \times [0,1]$, using the facts that:
\begin{itemize}
\item for each fixed $x$, the map $f(x,\cdot)$ is increasing,
\item for each fixed $\varepsilon$, the map $f(\cdot,\varepsilon)$ is measurable, by measurability of the conditional distribution with respect to the disintegration variable.
\end{itemize}

By construction,
$$
(X,Y) \stackrel{\mathcal L}{=} (X, f(X,\varepsilon)) \ .
$$

\medskip

\noindent
{\bf Higher dimensions:}
If $d_2 > 1$, the same disintegration argument applies using the classical Rosenblatt transform which proceeds componentwise (slice by slice).
\end{proof}

\medskip

\noindent
{\bf A remark on the equivariance relation when $d_1 = d_2 = 1$:}
Assume that $F_X$ and $F_Y$ are continuous. Writing
$$
X = F_X^{\langle -1 \rangle}(U) \ ,
\qquad
Y = F_Y^{\langle -1 \rangle}(V) \ ,
$$
with $U,V$ uniform on $[0,1]$, we obtain the equivalence
$$
Y = f(X,\varepsilon)
\quad\Longleftrightarrow\quad
V = \big(F_Y \circ f \circ (F_X^{\langle -1 \rangle} \otimes \mathrm{id})\big)(U,\varepsilon) \ .
$$

Consequently, we obtain the equivariance relation
$$
f_{U,V}
=
F_Y \circ f_{X,Y} \circ (F_X^{\langle -1 \rangle} \otimes \mathrm{id}) \ .
$$

\bibliographystyle{alpha}
\bibliography{references}


\noindent\textbf{Reda Chhaibi}\\
Universit\'e C\^ote d'Azur, LJAD, CNRS\\
Campus Sciences, Parc Valrose, \\
28 avenue Valrose,\\
06108 Nice Cedex 02\\
\texttt{reda.chhaibi@univ-cotedazur.fr}

\medskip

\noindent\textbf{Fabrice Gamboa}\\
 Université de Toulouse\\
Institut de Mathématiques de Toulouse\\
118, route de Narbonne\\
F-31062 Toulouse Cedex 9 \\
\texttt{fabrice.gamboa@math.univ-toulouse.fr}

\medskip

\noindent\textbf{Cl\'ement Pellegrini}\\
 Université de Toulouse\\
Institut de Mathématiques de Toulouse\\
118, route de Narbonne\\
F-31062 Toulouse Cedex 9 \\
\texttt{clement.pellegrini@math.univ-toulouse.fr}

\end{document}